\newtheorem{example}[theorem]{Example}
\newtheorem{remark}[theorem]{Remark}
\begin{document}

\title{Controllability of periodic linear systems, the Poincar\'{e} sphere, and
quasi-affine systems}
\author{Fritz Colonius\\Institut f\"{u}r Mathematik, Universit\"{a}t Augsburg, Augsburg, Germany
\and Alexandre J. Santana and Juliana Setti\\Departamento de Matem\'{a}tica, Universidade Estadual de Maring\'{a}\\Maring\'{a}, Brazil}
\maketitle

\textbf{Abstract: }For periodic linear control systems with bounded control
range, an autonomized system is introduced by adding the phase to the state of
the system. Here a unique control set (i.e., a maximal set of approximate
controllability) with nonvoid interior exists. It is determined by the
spectral subspaces of the homogeneous part which is a periodic linear
differential equation. Using the Poincar\'{e} sphere one obtains a
compactification of the state space allowing us to describe the behavior
\textquotedblleft near infinity\textquotedblright\ of the original control
system. Furthermore, an application to quasi-affine systems yields a unique
control set with nonvoid interior.

\textbf{Key words. }periodic linear control system, quasi-affine control
system, control set, Floquet theory

\textbf{AMS subject classification.} 93B05, 34H05

\section{Introduction}

We study controllability properties for periodic linear control systems and
give an application to quasi-affine control systems. Periodic linear control
systems have the form
\begin{equation}
\dot{x}(t)=A(t)x(t)+B(t)u(t),\quad u(t)\in U, \label{periodic0}%
\end{equation}
where $A\in L^{\infty}(\mathbb{R},\mathbb{R}^{d\times d})$ and $B\in
L^{\infty}(\mathbb{R},\mathbb{R}^{d\times m})$ are $T$-periodic for some
$T>0$. We suppose that the controls $u=(u_{1},\ldots,u_{m})$ have values in a
bounded convex neighborhood $U$ of the origin in $\mathbb{R}^{m}$. The set of
admissible controls is
\[
\mathcal{U}=\{u\in L^{\infty}(\mathbb{R},\mathbb{R}^{m})\left\vert u(t)\in
U\text{ for almost all }t\right.  \}.
\]
We denote the solutions (in the Carath\'{e}odory sense) of (\ref{periodic0})
with initial condition $x(t_{0})=x_{0}$ by $\varphi(t;t_{0},x_{0}%
,u),\,t\in\mathbb{R}$. The homogeneous part of (\ref{periodic0}) is the
(uncontrolled) homogeneous periodic differential equation
\begin{equation}
\dot{x}(t)=A(t)x(t). \label{hom}%
\end{equation}
Nonautonomous control systems can be autonomized by including time in the
state of the system. This is useful, if recurrence properties can be
exploited; cf. Johnson and Nerurkar \cite{JoNer92}. In the $T$-periodic case,
it suffices to add the phases $\tau\in\lbrack0,T)$ to the states in
$\mathbb{R}^{d}$ (cf. Gayer \cite{Gayer05} for general periodic nonlinear
systems) and we follow this approach. We give a spectral characterization of
the reachable sets generalizing Sontag \cite[Corollary 3.6.7]{Son98} for
autonomous linear control systems; cf. also \cite[p. 139]{Son98} for some
historical remarks. The proof also uses arguments from Colonius, Cossich, and
Santana \cite[Theorem 15]{ColCS22} for autonomous discrete-time systems. This
yields a characterization of the unique control set (i.e., a maximal set of
approximate controllability) with nonvoid interior. The Poincar\'{e} sphere
from the global theory of nonlinear differential equations (introduced by
Poincar\'{e} \cite{Poin} for polynomial differential equations) provides a
compactification of the state space; cf. the monograph Perko \cite[Section
3.10]{Perko} and, e.g., Valls \cite{Valls22} for a recent contribution. This
leads us to a description\ of the behavior \textquotedblleft near
infinity\textquotedblright\ of the original control system. Since the induced
system on the Poincar\'{e} sphere is obtained by projection of a homogeneous
system it suffices to consider its restriction to the upper hemisphere.
Alternatively one might consider the induced system on projective space. In
\cite{ColSS22} we have used the latter approach for autonomous affine control
systems. We remark that Da Silva \cite{DaS16} has generalized \cite[Corollary
3.6.7]{Son98} in another direction, for linear control systems on solvable Lie
groups. General background on control of periodic linear systems is contained
in Bittanti and Colaneri \cite{Bitt}.

The present paper may also be considered as a contribution to a Floquet theory
of periodic control systems. They involve two $T$-periodic matrix functions
$A(\cdot)$ and $B(\cdot)$ and a periodic coordinate change can transform only
one of them to a constant matrix, hence periodic linear systems cannot be
conjugated to autonomous linear systems. But the formulation of Floquet theory
in the framework of linear skew product flows can be generalized (cf., e.g.,
Colonius and Kliemann \cite[Chapter 7]{ColK14}, and Kloeden and Rasmussen
\cite{KR11} for the general theory of skew product flows). The spectral
subspaces (the stable, center, and unstable subspaces) of (\ref{hom})
depending on the phase $\tau\in\lbrack0,T)$ characterize controllability properties.

In the\ last part of this paper we introduce quasi-affine control systems
which have the form%
\begin{equation}
\dot{x}(t)=A(v(t))x(t)+B(v(t))u(t), \label{qaffine1}%
\end{equation}
with $A(v):=A_{0}+\sum_{i=1}^{p}v_{i}A_{i}$ for $v\in V\subset\mathbb{R}^{p}$,
where $A_{0},A_{1},\ldots,A_{p}\in\mathbb{R}^{d\times d}$, and $B:V\rightarrow
\mathbb{R}^{d\times m}$ is continuous. The controls $(u,v)$ have values in a
compact convex neighborhood $U\times V\subset\mathbb{R}^{m}\times
\mathbb{R}^{p}$ of $(0,0)$, and the set of admissible controls is
\[
\mathcal{U}\times\mathcal{V}=\{(u,v)\in L^{\infty}(\mathbb{R},\mathbb{R}%
^{m})\times L^{\infty}(\mathbb{R},\mathbb{R}^{p})\left\vert u(t)\in U\text{
and }v(t)\in V\text{ for almost all }t\right.  \}.
\]
Quasi-affine systems look similar to linear control systems but the
coefficient matrices in front of $x$ and $u$ may depend on the additional
controls $v$. If a periodic $v\in\mathcal{V}$ is fixed, one obtains a periodic
linear control system with controls $u$. We use this relation to prove results
for control sets of quasi-affine systems. A special case are affine control
systems with separated additive and multiplicative control terms,%
\begin{equation}
\dot{x}(t)=A_{0}x(t)+\sum_{i=1}^{p}v_{i}(t)A_{i}x(t)+Bu(t). \label{affine}%
\end{equation}
Controllability properties of affine systems are a classical topic in control.
We only refer to the monographs Mohler \cite{Mohler}, Elliott \cite{Elliott},
and Jurdjevic \cite{Jurd97}. Our recent paper \cite{ColSS22} proves results on
control sets of general affine systems; cf. also \cite{ColRS} for control sets
about equilibria.

The contents of this paper are the following. After preliminaries in Section
\ref{Section2} on $T$-periodic linear control systems, Section \ref{Section3}
introduces the autonomized control system with state space $\mathbb{S}%
^{1}\times\mathbb{R}^{d}$, where the unit circle $\mathbb{S}^{1}$ is
parametrized by $\tau\in\lbrack0,T)$. In Section \ref{Section4}, Theorem
\ref{Theorem_sub} characterizes the reachable and controllable subsets using
the spectral subbundles of the periodic differential equation (\ref{hom}).
Theorem \ref{Theorem_cs1} shows that a unique control set $D^{a}%
\subset\mathbb{S}^{1}\times\mathbb{R}^{d}$ with nonvoid interior exists with
unbounded part given by the center subbundle. Section \ref{Section5} projects
the control system to the open upper hemisphere $\mathbb{S}^{d,+}$ of the
Poincar\'{e} sphere. Together with the equator $\mathbb{S}^{d,0}$ this
constitutes a compactification where the behavior \textquotedblleft near
infinity\textquotedblright\ is mapped onto the behavior near the equator. The
control set $D^{a}$ on $\mathbb{S}^{1}\times\mathbb{R}^{d}$ projects onto the
control set $D_{P}^{a}$ on $\mathbb{S}^{1}\times\mathbb{S}^{d,+}$ and the
intersection of $\overline{\mathrm{int}D_{P}^{a}}$ with $\mathbb{S}^{1}%
\times\mathbb{S}^{d,0}$ is determined by the image of the center subbundle of
(\ref{hom}). These results are also new for autonomous linear control systems.
Section \ref{Section6} presents some low dimensional examples, and, finally,
Section \ref{Section7} introduces quasi-affine systems. Theorem
\ref{Theorem7.2} characterizes their unique control set with nonvoid interior
using the control sets of the periodic linear control systems for fixed
periodic $v\in\mathcal{V}$.

\textbf{Notation:} For a matrix $A\in\mathbb{R}^{d\times d}$ the set of
eigenvalues is denoted by $\mathrm{spec}(A)$ and the real generalized
eigenspace for $\mu\in\mathrm{spec}(A)$ is $GE(A,\mu)$. The $d\times d$
identity matrix is $I_{d}$ and $\mathbb{N}=\{0,1,2,\ldots\}$. The interior of
a set $M$ in a metric space is $\mathrm{int}M$.

\section{Preliminaries\label{Section2}}

In this section we introduce some notation and discuss consequences of the
$T$-periodicity property.\ In particular, we recall a result on
controllability for periodic systems without control restrictions.

The principal fundamental solution $X(t,s)\in\mathbb{R}^{d\times d}%
,s,t\in\mathbb{R}$, of the homogeneous equation (\ref{hom}) is the matrix
solution of
\[
\frac{d}{dt}X(t,s)=A(t)X(t,s)\text{ with }X(s,s)=I_{d}.
\]
Here $X(t,r)X(r,s)=X(t,s),t,r,s\in\mathbb{R}$, and by $T$-periodicity
$X(t+kT,s+kT)=X(t,s)$ for all $k\in\mathbb{Z}$. The variation-of-parameters
formula for the solutions of (\ref{periodic0}) yields%
\begin{equation}
\varphi(t;t_{0},x,u)=X(t,t_{0})x+\int_{t_{0}}^{t}X(t,s)B(s)u(s)ds. \label{VdP}%
\end{equation}
Denote for $x\in\mathbb{R}^{d}$ the reachable set for $t\geq t_{0}$ and the
controllable set for $t\leq t_{0}$ of (\ref{periodic0}) by
\begin{equation}
\mathbf{R}_{t}(t_{0},x)=\{\varphi(t;t_{0},x,u)\left\vert u\in\mathcal{U}%
\right.  \},\mathbf{C}_{t}(t_{0},x)=\{y\in\mathbb{R}^{d}\left\vert \exists
u\in\mathcal{U}:\varphi(t_{0};t,y,u)=x\right.  \}, \label{C1}%
\end{equation}
resp., and let the reachable set and the controllable set be%
\[
\mathbf{R}(t_{0},x):=\bigcup\nolimits_{t\geq t_{0}}\mathbf{R}_{t}%
(t_{0},x)\text{ and }\mathbf{C}(t_{0},x):=\bigcup\nolimits_{t\leq t_{0}%
}\mathbf{C}_{t}(t_{0},x).
\]

\begin{lemma}
\label{Lemma1}Let $x\in\mathbb{R}^{d},$ $t\geq t_{0}$, and $k\in\mathbb{N}$.

(i) The reachable sets are convex and satisfy $\mathbf{R}_{t}(t_{0}%
,x)=\mathbf{R}_{t+kT}(t_{0}+kT,x)$.

(ii) The reachable sets $\mathbf{R}_{kT+t_{0}}(t_{0},0)$ are increasing with
$k\in\mathbb{N}$.
\end{lemma}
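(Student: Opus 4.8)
The plan is to derive everything from the variation-of-parameters formula \eqref{VdP} together with the $T$-periodicity relation $X(t+kT,s+kT)=X(t,s)$ and the convexity of $U$.

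For part (i), I first note convexity: with initial point $x$ fixed, the map $u\mapsto\varphi(t;t_0,x,u)=X(t,t_0)x+\int_{t_0}^t X(t,s)B(s)u(s)\,ds$ is affine in $u$, and $\mathcal U$ is convex (it is the set of measurable selections of the convex set $U$), so its image $\mathbf R_t(t_0,x)$ is convex. For the shift identity I would change variables in the integral: given $u\in\mathcal U$, define $\tilde u(\cdot):=u(\cdot-kT)$, which again lies in $\mathcal U$; then substituting $s=r+kT$ and using $X(t+kT,r+kT)=X(t,r)$ and the $T$-periodicity of $B$ gives
\[
\varphi(t+kT;t_0+kT,x,\tilde u)=X(t,t_0)x+\int_{t_0}^{t}X(t,r)B(r)u(r)\,dr=\varphi(t;t_0,x,u).
\]
Since $u\mapsto\tilde u$ is a bijection of $\mathcal U$ onto itself, the two reachable sets coincide.

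For part (ii), I take $x=0$ and compare $\mathbf R_{kT}(0,0)$ with $\mathbf R_{(k+1)T}(0,0)$ (writing $t_0=0$ without loss of generality, or carrying $t_0$ along). The point is that $0\in\mathbf R_{T}(0,0)$ since the zero control keeps the state at the origin (the system is linear/homogeneous when $u\equiv0$ and $x=0$), and the reachable set is invariant under concatenation of controls. Concretely, given a target $y=\varphi(kT;0,0,u)\in\mathbf R_{kT}(0,0)$, I would build a control on $[0,(k+1)T]$ that is zero on $[0,T]$ — keeping the state at $0$ at time $T$ — and then, using part (i) with the shift by $-T$ applied to $u$, steers from $0$ at time $T$ to $y$ at time $(k+1)T$; concatenation of admissible controls is admissible, so $y\in\mathbf R_{(k+1)T}(0,0)$. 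Hence $\mathbf R_{kT}(0,0)\subset\mathbf R_{(k+1)T}(0,0)$, and iterating gives monotonicity in $k$.

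I do not expect a serious obstacle here; the only points requiring care are the bookkeeping in the change of variables for (i) — making sure the periodicity of both $X$ and $B$ is used and that the shifted control lands back in $\mathcal U$ — and, for (ii), being explicit that prepending the zero control on a full period leaves the origin fixed and that concatenation preserves admissibility. Everything else is routine manipulation of \eqref{VdP}.
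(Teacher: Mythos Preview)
Your proposal is correct and follows essentially the same route as the paper: convexity via the affine dependence of $\varphi$ on $u$ and convexity of $\mathcal U$, the shift identity in (i) by changing variables $s\mapsto s+kT$ and using the $T$-periodicity of $X$ and $B$, and for (ii) prepending the zero control on a full period (the paper does this in one step for arbitrary $k\geq\ell$ rather than iterating $k\to k+1$, but that is only a cosmetic difference).
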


\begin{proof}
Convexity of $\mathbf{R}_{t}(t_{0},x)$ holds since the control range $U$ is
convex. The equality in (i) follows from
\begin{align*}
\varphi(t+kT;t_{0}+kT,x,u)  &  =X(t+kT,t_{0}+kT)x+\int_{t_{0}+kT}%
^{t+kT}X(t+kT,s)B(s)u(s)ds\\
&  =X(t,t_{0})x+\int_{t_{0}}^{t}X(t+kT,s+kT)B(s+kT)u(s+kT)ds\\
&  =X(t,t_{0})x+\int_{t_{0}}^{t}X(t,s)B(s)u(s+kT)ds\\
&  =\varphi(t;t_{0},x,u(\cdot+kT)),
\end{align*}
where $u(\cdot+kT)(s):=u(s+kT),s\in\mathbb{R}$.

For assertion (ii) let $k\geq\ell$ and consider $x\in\mathbf{R}_{\ell T+t_{0}%
}(t_{0},0)$ with%
\[
x=\varphi(\ell T+t_{0};t_{0},0,u)=\int_{t_{0}}^{\ell T+t_{0}}X(\ell
T+t_{0},s)B(s)u(s)ds.
\]
Define
\[
v(t)=\left\{
\begin{array}
[c]{lll}%
0 & \text{for} & t\in\lbrack t_{0},(k-\ell)T+t_{0})\\
u(t-(k-\ell)T) & \text{for} & t\in\lbrack(k-\ell)T+t_{0},kT+t_{0}]
\end{array}
\right.  .
\]
Then one obtains
\begin{align*}
&  \varphi(kT+t_{0};t_{0},0,v)=\int_{t_{0}}^{kT+t_{0}}X(kT+t_{0}%
,s)B(s)v(s)ds\\
&  =\int_{t_{0}}^{(k-\ell)T+t_{0}}X(kT+t_{0},s)B(s)v(s)ds+\int_{(k-\ell
)T+t_{0}}^{kT+t_{0}}X(kT+t_{0},s)B(s)v(s)ds\\
&  =0+\int_{t_{0}}^{\ell T+t_{0}}X(kT+t_{0},s+(k-\ell)T)B(s+(k-\ell
)T)v(s+(k-\ell)T)ds\\
&  =\int_{t_{0}}^{\ell T+t_{0}}X(\ell T+t_{0},s)B(s)u(s)ds=x,
\end{align*}
hence $\mathbf{R}_{\ell T+t_{0}}(t_{0},0)\subset\mathbf{R}_{kT+t_{0}}%
(t_{0},0)$.
\end{proof}

In order to clarify the relationship between the reachable and the
controllable sets of the considered nonautonomous control systems it is
convenient to introduce the following time reversed systems (cf. Sontag
\cite[Definition 2.6.7 and Lemma 2.6.8]{Son98}). The reversal of
(\ref{periodic0}) at $\mu\in\mathbb{R}$ is
\begin{equation}
\dot{y}(t)=-A(\mu-t)y(t)-B(\mu-t)u(t),\quad u\in\mathcal{U}, \label{reversed}%
\end{equation}
with trajectories denoted by $\varphi_{\mu}^{-}(t;t_{0},x_{0},u),\,t\in
\mathbb{R}$.

\begin{lemma}
\label{Lemma2}For $t_{1}<t_{0}$ the controllable set $\mathbf{C}_{t_{1}}%
(t_{0},x)$ of (\ref{periodic0}) coincides with the reachable set
$\mathbf{R}_{t_{0}}^{-}(t_{1},x)$ of the time-reversed system (\ref{reversed})
at $\mu=t_{0}+t_{1}$.
\end{lemma}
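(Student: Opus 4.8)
The plan is to unwind the two definitions and exhibit a bijection between admissible controls for the two systems that matches the trajectories at the relevant endpoints. Recall from \eqref{C1} that $y\in\mathbf{C}_{t_{1}}(t_{0},x)$ means there is $u\in\mathcal{U}$ with $\varphi(t_{0};t_{1},y,u)=x$, i.e. the original system steers $y$ at time $t_{1}$ to $x$ at time $t_{0}$. On the other hand $z\in\mathbf{R}_{t_{0}}^{-}(t_{1},x)$ means there is $w\in\mathcal{U}$ with $\varphi_{\mu}^{-}(t_{0};t_{1},x,w)=z$, where $\mu=t_{0}+t_{1}$; that is, the reversed system \eqref{reversed} steers $x$ at time $t_{1}$ to $z$ at time $t_{0}$. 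The natural guess is that $y=z$ under the correspondence $w(t)=u(\mu-t)$, and the claim is that this is a genuine bijection on $\mathcal{U}$ (it clearly is, since $U$ is fixed and $t\mapsto\mu-t$ is measure-preserving).

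First I would set $\psi(t):=\varphi(\mu-t;t_{1},y,u)$ for a trajectory of the original system with $\psi(t_{1})=\varphi(t_{0};t_{1},y,u)=x$ (using $\mu-t_{1}=t_{0}$). Differentiating, $\dot{\psi}(t)=-\dot{\varphi}(\mu-t)=-\bigl[A(\mu-t)\varphi(\mu-t;t_{1},y,u)+B(\mu-t)u(\mu-t)\bigr]=-A(\mu-t)\psi(t)-B(\mu-t)w(t)$ with $w(t):=u(\mu-t)$, so $\psi$ solves \eqref{reversed} with initial value $\psi(t_{1})=x$; hence $\psi(t)=\varphi_{\mu}^{-}(t;t_{1},x,w)$ by uniqueness of Carath\'eodory solutions. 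Evaluating at $t=t_{0}$ gives $\varphi_{\mu}^{-}(t_{0};t_{1},x,w)=\psi(t_{0})=\varphi(\mu-t_{0};t_{1},y,u)=\varphi(t_{1};t_{1},y,u)=y$, which shows $y\in\mathbf{R}_{t_{0}}^{-}(t_{1},x)$. This proves $\mathbf{C}_{t_{1}}(t_{0},x)\subset\mathbf{R}_{t_{0}}^{-}(t_{1},x)$.

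For the reverse inclusion I would run the same computation starting from a trajectory $\varphi_{\mu}^{-}(\cdot;t_{1},x,w)$ of \eqref{reversed} and set $\phi(t):=\varphi_{\mu}^{-}(\mu-t;t_{1},x,w)$, obtaining by the identical differentiation that $\phi$ solves \eqref{periodic0} with control $u(t):=w(\mu-t)$; then $\phi(t_{0})=\varphi_{\mu}^{-}(t_{1};t_{1},x,w)=x$ and $\phi(t_{1})=\varphi_{\mu}^{-}(t_{0};t_{1},x,w)$, so that the point reached by the reversed system at $t_{0}$ is steered to $x$ at $t_{0}$ by the original system from time $t_{1}$, i.e. it lies in $\mathbf{C}_{t_{1}}(t_{0},x)$. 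Since $u\mapsto u(\mu-\cdot)$ is an involution mapping $\mathcal{U}$ onto $\mathcal{U}$, the two inclusions together give equality.

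The only point needing a little care — and the one I would flag as the main (though minor) obstacle — is the verification that the change of variables $t\mapsto\mu-t$ genuinely turns a Carath\'eodory solution of \eqref{periodic0} into one of \eqref{reversed}: the chain rule must be justified at the level of absolutely continuous functions, and one must check that $w(t)=u(\mu-t)$ again lies in $\mathcal{U}$, i.e. is measurable, essentially bounded, and takes values in $U$ for a.e.\ $t$. All of this is routine, since reflection is an affine measure-preserving bijection of $\mathbb{R}$, but it is the substantive content; once it is in place, uniqueness of Carath\'eodory solutions does the rest and the bijection of control sets follows by symmetry. (This is exactly the periodic, time-dependent analogue of Sontag's \cite[Lemma 2.6.8]{Son98}, with $\mu=t_{0}+t_{1}$ chosen precisely so that the reflection swaps the times $t_{0}$ and $t_{1}$.)
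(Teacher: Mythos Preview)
Your proof is correct and follows essentially the same approach as the paper: both introduce the time-reflection $t\mapsto\mu-t=t_{0}+t_{1}-t$, set $\psi(t)=\varphi(\mu-t;t_{1},y,u)$ with the reflected control $u(\mu-\cdot)$, and verify by differentiation that $\psi$ solves \eqref{reversed} with $\psi(t_{1})=x$ and $\psi(t_{0})=y$. You are slightly more thorough in writing out the reverse inclusion and flagging the Carath\'eodory/measurability check, whereas the paper treats one direction and concludes with ``the assertion follows,'' but the argument is the same.
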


\begin{proof}
Let $y\in\mathbf{C}_{t_{1}}(t_{0},x)$ with $\varphi(t_{0};t_{1},y,u)=x$.
Define the control $u^{-}(t):=u(t_{0}+t_{1}-t),t\in\mathbb{R}$. The function
$y(t):=\varphi(t_{0}+t_{1}-t;t_{1},y,u),t\in\lbrack t_{1},t_{0}]$, satisfies
the differential equation%
\[
\dot{y}(t)=-A(t_{0}+t_{1}-t)y(t)-B(t_{0}+t_{1}-t)u(t_{0}+t_{1}-t),
\]
and $y(t_{0})=y$ and $y(t_{1})=x$. Thus $y(t)=\varphi_{t_{0}+t_{1}}%
^{-}(t;t_{1},x,u^{-}),t\in\lbrack t_{1},t_{0}]$, and the assertion follows.
\end{proof}

Since $\mathbf{C}_{-kT+t_{0}}(t_{0},x)=\mathbf{R}_{t_{0}}^{-}(-kT+t_{0}%
,x)=\mathbf{R}_{kT+t_{0}}^{-}(t_{0},x)$ Lemma \ref{Lemma1} implies that also
the controllable sets are convex and for $k\geq\ell$ in $\mathbb{N}$ the
inclusion $\mathbf{C}_{-\ell T+t_{0}}(t_{0},0)\subset\mathbf{C}_{-kT+t_{0}%
}(t_{0},0)$ holds.

\begin{proposition}
\label{Proposition9_JDDE}For $\tau\in\left[  0,T\right]  $ consider
$x\in\mathbf{R}_{kT+\tau}(\tau,0)$ and $y\in\mathbf{R}_{\ell T+\tau}(\tau,0)$
where $k,\ell\in\mathbb{N}$. Then it follows that%
\begin{equation}
x+X(kT+\tau,\tau)y=x+X(T+\tau,\tau)^{k}y\in\mathbf{R}_{(k+\ell)T+\tau}%
(\tau,0). \label{2.6}%
\end{equation}

\end{proposition}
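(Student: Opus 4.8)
The plan is to combine the cocycle (semigroup) property of the fundamental solution with the $T$-periodicity relations $X(t+kT,s+kT)=X(t,s)$ and $B(s+kT)=B(s)$, in the same spirit as the proof of Lemma \ref{Lemma1}(ii).

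First I would record the matrix identity. Setting $t_{j}=jT+\tau$ for $j=0,\ldots,k$ and using $X(t_{k},t_{0})=X(t_{k},t_{k-1})X(t_{k-1},t_{k-2})\cdots X(t_{1},t_{0})$ together with $X(t_{j+1},t_{j})=X((j+1)T+\tau,jT+\tau)=X(T+\tau,\tau)$ (shift both arguments by $-jT$ and use $T$-periodicity), one obtains $X(kT+\tau,\tau)=X(T+\tau,\tau)^{k}$, which settles the equality in (\ref{2.6}).

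For the inclusion, choose $u\in\mathcal{U}$ with $x=\varphi(kT+\tau;\tau,0,u)=\int_{\tau}^{kT+\tau}X(kT+\tau,s)B(s)u(s)\,ds$ and $w\in\mathcal{U}$ with $y=\varphi(\ell T+\tau;\tau,0,w)=\int_{\tau}^{\ell T+\tau}X(\ell T+\tau,s)B(s)w(s)\,ds$. I would then define the concatenated control
\[
v(t):=\begin{cases} w(t), & t\in[\tau,\ell T+\tau),\\[2pt] u(t-\ell T), & t\in[\ell T+\tau,(k+\ell)T+\tau], \end{cases}
\]
which again lies in $\mathcal{U}$ since $U$ is the common control range. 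Because $v=w$ on $[\tau,\ell T+\tau]$ we have $\varphi(\ell T+\tau;\tau,0,v)=y$, so the cocycle property gives $\varphi((k+\ell)T+\tau;\tau,0,v)=\varphi((k+\ell)T+\tau;\ell T+\tau,y,v)$. Expanding the right-hand side by (\ref{VdP}),
\[
\varphi((k+\ell)T+\tau;\ell T+\tau,y,v)=X((k+\ell)T+\tau,\ell T+\tau)\,y+\int_{\ell T+\tau}^{(k+\ell)T+\tau}X((k+\ell)T+\tau,s)B(s)v(s)\,ds .
\]
By $T$-periodicity $X((k+\ell)T+\tau,\ell T+\tau)=X(kT+\tau,\tau)$, giving the term $X(kT+\tau,\tau)y$. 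In the integral I substitute $s=\sigma+\ell T$ and use $X((k+\ell)T+\tau,\sigma+\ell T)=X(kT+\tau,\sigma)$, $B(\sigma+\ell T)=B(\sigma)$, $v(\sigma+\ell T)=u(\sigma)$ to identify it with $\int_{\tau}^{kT+\tau}X(kT+\tau,\sigma)B(\sigma)u(\sigma)\,d\sigma=x$. Hence $\varphi((k+\ell)T+\tau;\tau,0,v)=x+X(kT+\tau,\tau)y$, which together with the matrix identity proves (\ref{2.6}).

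There is essentially no hard step; the only thing to get right is the order of concatenation: one must first steer $0$ to $y$ over an interval of length $\ell T$ and only afterwards append the time-shifted control $u$, so that the free evolution over the last $kT$ time units acts on $y$ and produces $X(kT+\tau,\tau)y$, while the added steering term reproduces $x$ unchanged. The degenerate cases $k=0$ or $\ell=0$ (where $x=0$ or $y=0$) are covered by the same formulas with an empty integral.
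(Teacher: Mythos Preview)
Your proof is correct and follows essentially the same approach as the paper: both concatenate the control producing $y$ on $[\tau,\ell T+\tau]$ with the $\ell T$-shifted control producing $x$ on $[\ell T+\tau,(k+\ell)T+\tau]$, and then use $T$-periodicity to identify the two pieces with $X(kT+\tau,\tau)y$ and $x$. The only cosmetic differences are that the paper computes the full integral directly rather than first invoking the cocycle property of $\varphi$, and that you spell out the identity $X(kT+\tau,\tau)=X(T+\tau,\tau)^{k}$, which the paper takes for granted.
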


\begin{proof}
There are $u,v\in\mathcal{U}$ with
\begin{align*}
x  &  =\varphi(kT+\tau;\tau,0,u)=\int_{\tau}^{kT+\tau}X(kT+\tau
,s)B(s)u(s)ds,\\
y  &  =\varphi(\ell T+\tau;\tau,0,v)=\int_{\tau}^{\ell T+\tau}X(\ell
T+\tau,s)B(s)v(s)ds.
\end{align*}
Define%
\[
w(t)=\left\{
\begin{array}
[c]{lll}%
v(t) & \text{for} & t\in\lbrack\tau,\ell T+\tau]\\
u(t-\ell T) & \text{for} & t\in(\ell T+\tau,(k+\ell)T+\tau]
\end{array}
\right.  .
\]
Then one computes%
\begin{align*}
&  \varphi((k+\ell)T+\tau;\tau,0,w)=\int_{\tau}^{(k+\ell)T+\tau}%
X((k+\ell)T+\tau,s)B(s)w(s)ds\\
&  =\int_{\tau}^{\ell T+\tau}X((k+\ell)T+\tau,s)B(s)v(s)ds\\
&  \qquad+\int_{\ell T+\tau}^{(k+\ell)T+\tau}X((k+\ell)T+\tau,s)B(s)u(s-\ell
T)ds\\
&  =\int_{\tau}^{\ell T+\tau}X((k+\ell)T+\tau,\ell T+\tau)X(\ell
T+\tau,s)B(s)v(s)ds\\
&  \qquad+\int_{\tau}^{kT+\tau}X((k+\ell)T+\tau,\ell T+s)B(\ell T+s)u(s)ds\\
&  =X(kT+\tau,\tau)\int_{\tau}^{\ell T+\tau}X(\ell T+\tau,s)B(s)v(s)ds+\int
_{\tau}^{kT+\tau}X(kT+\tau,s)B(s)u(s)ds\\
&  =X(kT+\tau,\tau)y+x.
\end{align*}
Thus (\ref{2.6}) holds.
\end{proof}

Controllability criteria for periodic linear systems without control
constraints are well known. The following theorem is due to Brunovsky
\cite{Brun}, slightly reformulated.

\begin{theorem}
\label{Theorem_Brunovsky}For the periodic linear system in (\ref{periodic0})
without control restrictions, the following properties are equivalent:

(i) For any two points $x_{1},x_{2}\in\mathbb{R}^{d}$ and any $t_{0}%
\in\mathbb{R}$ there are $t_{1}>t_{0}$ and $u\in L^{\infty}([t_{0}%
,t_{1}],\mathbb{R}^{m})$ such that $\varphi(t_{1};t_{0},x_{1},u)=x_{2}$.

(ii) For any two points $x_{1},x_{2}\in\mathbb{R}^{d}$ and $\tau\in\left[
0,T\right]  $ there is $u\in L^{\infty}([\tau,dT+\tau],\mathbb{R}^{m})$ such
that $\varphi(dT+\tau;\tau,x_{1},u)=x_{2}$.

(iii) The rows of the matrix function $X(t,0)^{-1}B(t),t\in\lbrack0,dT]$, are
linearly independent.

If any of the equivalent conditions above is satisfied, the system in
(\ref{periodic0}) without control restrictions is called controllable.
\end{theorem}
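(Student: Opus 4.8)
The plan is to reduce all three conditions to the triviality of one subspace. Let me set
\[
g(s):=X(s,0)^{-1}B(s)=X(0,s)B(s),\qquad s\in\mathbb{R};
\]
by $T$-periodicity of $A$ and $B$ together with $X(t,r)X(r,s)=X(t,s)$ one has $g(s+T)=M^{-1}g(s)$, where $M:=X(T,0)$ is the monodromy matrix. For an interval $J$ put $N(J):=\{\zeta\in\mathbb{R}^{d}:\zeta^{\top}g(s)=0\text{ for a.a. }s\in J\}$, a linear subspace that decreases as $J$ grows; condition (iii) is precisely $N([0,dT])=\{0\}$. From the variation-of-parameters formula (\ref{VdP}) and $X(t,s)=X(t,0)X(0,s)$ one obtains, for the reachable set with unrestricted controls as in the theorem, $\mathbf{R}_{t}(t_{0},x)=X(t,t_{0})x+X(t,0)\,W([t_{0},t])$ with $W(J):=\{\int_{J}g(s)u(s)\,ds:u\in L^{\infty}(J,\mathbb{R}^{m})\}$, and a routine $L^{\infty}$ argument gives $W(J)=N(J)^{\perp}$. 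So reachability over a fixed time window is governed by the triviality of $N$ of the corresponding interval.

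The heart of the proof is a Floquet-type saturation lemma: $N([\tau,\tau+dT])=N(\mathbb{R})$ for every $\tau\in\mathbb{R}$. I would split $[\tau,\tau+dT]$ into the $d$ period-length blocks $[\tau+jT,\tau+(j+1)T]$, $j=0,\dots,d-1$, and use $g(r+jT)=M^{-j}g(r)$ to see that $\zeta\in N([\tau,\tau+dT])$ is equivalent to $(M^{-\top})^{j}\zeta\in N([\tau,\tau+T])$ for $j=0,\dots,d-1$. Since $M^{-\top}$ is invertible, its characteristic polynomial has degree $d$ and nonzero constant term, so Cayley--Hamilton expresses every integer power $(M^{-\top})^{\ell}$ as a polynomial of degree $<d$ in $M^{-\top}$; hence $(M^{-\top})^{\ell}\zeta\in N([\tau,\tau+T])$ for all $\ell\in\mathbb{Z}$, and translating back block by block shows $\zeta^{\top}g\equiv 0$ a.e. on $\mathbb{R}$, i.e. $\zeta\in N(\mathbb{R})$; the reverse inclusion is trivial. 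This lemma, which is the only place where periodicity is genuinely exploited, is the step I expect to demand the most care.

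Granting the lemma, I would close the loop (iii)$\Rightarrow$(ii)$\Rightarrow$(i)$\Rightarrow$(iii). For (iii)$\Leftrightarrow$(ii): condition (ii) at a phase $\tau$ says $\mathbf{R}_{dT+\tau}(\tau,x_{1})=\mathbb{R}^{d}$ for all $x_{1}$, which (translating by the invertible $X(dT+\tau,\tau)$) is the same as $\mathbf{R}_{dT+\tau}(\tau,0)=\mathbb{R}^{d}$; by the reachable-set formula and the lemma this set equals $X(dT+\tau,0)N([\tau,\tau+dT])^{\perp}=X(dT+\tau,0)N(\mathbb{R})^{\perp}$, which fills $\mathbb{R}^{d}$ iff $N(\mathbb{R})=\{0\}$ iff (lemma at $\tau=0$) condition (iii) holds, and this is independent of $\tau$. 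For (ii)$\Rightarrow$(i), given $t_{0}$ write $t_{0}=kT+\tau$ with $\tau\in[0,T)$, apply (ii) at $\tau$, and shift the resulting control by $kT$, using the periodicity identity $\mathbf{R}_{dT+\tau}(\tau,x_{1})=\mathbf{R}_{dT+t_{0}}(t_{0},x_{1})$ of Lemma~\ref{Lemma1}(i), so $t_{1}:=t_{0}+dT$ works. For (i)$\Rightarrow$(iii), apply (i) with $t_{0}=0$, arbitrary $x_{1}$ and $x_{2}=0$: there is $t_{1}>0$ with $0\in\mathbf{R}_{t_{1}}(0,x_{1})=X(t_{1},0)x_{1}+X(t_{1},0)N([0,t_{1}])^{\perp}$, hence $-x_{1}\in N([0,t_{1}])^{\perp}\subseteq N(\mathbb{R})^{\perp}$; as $x_{1}$ ranges over $\mathbb{R}^{d}$ this forces $N(\mathbb{R})^{\perp}=\mathbb{R}^{d}$, so $N(\mathbb{R})=\{0\}$ and (iii) follows from the lemma.
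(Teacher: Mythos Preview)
Your proof is correct. The paper itself does not give a self-contained argument: it cites Brunovsky \cite[Proposition~3]{Brun} for the equivalence (i)$\Leftrightarrow$(iii), observes that (ii)$\Rightarrow$(i) is immediate from $T$-periodicity, and obtains (i)$\Rightarrow$(ii) by noting that (i) for the shifted data $A(\tau+\cdot),B(\tau+\cdot)$ gives (ii) at phase $\tau$. Your approach supplies what the paper outsources: the saturation lemma $N([\tau,\tau+dT])=N(\mathbb{R})$ via Cayley--Hamilton applied to the monodromy matrix is precisely the mechanism behind Brunovsky's result, and your reduction of reachability to the orthogonal complement $W(J)=N(J)^{\perp}$ is the standard Gramian-type argument. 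What you gain is a fully self-contained proof that makes the role of the time horizon $dT$ transparent; what the paper's route gains is brevity by deferring to the literature. Two minor remarks: in (ii)$\Rightarrow$(i) you invoke Lemma~\ref{Lemma1}(i), which is stated in the paper for $k\in\mathbb{N}$, but the same periodicity computation gives it for all $k\in\mathbb{Z}$, as you need; and your phrase ``translating by the invertible $X(dT+\tau,\tau)$'' should read ``translating by $X(dT+\tau,\tau)x_{1}$,'' though the intended argument $\mathbf{R}_{dT+\tau}(\tau,x_{1})=X(dT+\tau,\tau)x_{1}+\mathbf{R}_{dT+\tau}(\tau,0)$ is clear and correct.
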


\begin{proof}
Brunovsky \cite[Proposition 3]{Brun} shows that conditions (i) and (iii) are
equivalent. By\ $T$-periodicity, condition (ii) implies (i). Conversely, cf.
the proof of \cite[Proposition 3]{Brun}, condition (i) implies (ii) for
$\tau=0$. If (i) holds for the system with matrix functions $A(t)$ and
$B(t),t\in\mathbb{R}$, it also holds for the system with $A(\tau+t)$ and
$B(\tau+t),t\in\mathbb{R}$, with $\tau\in\left[  0,T\right]  $. Hence
condition (ii) follows for all $\tau\in\left[  0,T\right]  $.
\end{proof}

\begin{remark}
Condition (iii) above generalizes the Kalman-criterion for controllability of
autonomous systems. It is equivalent to assignability of the spectrum by
$T$-periodic state feedbacks, \cite[Theorem on p. 302]{Brun}. As shown in
Bittanti, Guarbadassi, Mafezzoni, and Silverman \cite{Bit78} and Bittanti,
Colaneri, and Guarbadassi \cite{Bit84} a criterion generalizing the
Hautus-Popov spectral characterization for controllability is only equivalent
to null-controllability.
\end{remark}

Theorem \ref{Theorem_Brunovsky} implies the following first result on
controllability properties of the system with control restrictions.

\begin{proposition}
\label{proposition_R}Assume that the periodic linear system in
(\ref{periodic0}) without control restrictions is controllable, and let
$\tau\in\left[  0,T\right]  $. Then for system (\ref{periodic0}) with controls
$u\in\mathcal{U}$ the reachable set $\mathbf{R}_{dT+\tau}(\tau,0)$ and the
controllable set $\mathbf{C}_{-dT+\tau}(\tau,0)$ of (\ref{periodic0}) are
convex and contain an $\varepsilon$-ball $\mathbf{B}(0;\varepsilon)$ with
$\varepsilon>0$ around $0\in\mathbb{R}^{d}$. The sets%
\[
\mathbf{R}_{\mathbb{N}T+\tau}(\tau,0):=\bigcup_{k\in\mathbb{N}}\mathbf{R}%
_{kT+\tau}(\tau,0)\text{ and }\mathbf{C}_{-\mathbb{N}T+\tau}(\tau
,0):=\bigcup_{k\in\mathbb{N}}\mathbf{C}_{-kT+\tau}(\tau,0),\tau\in
\lbrack0,T],
\]
are convex and open. Furthermore also the sets%
\[
\mathbf{R}_{\mathbb{N}T+\tau}(0,0),\mathrm{int}\mathbf{R}_{\mathbb{N}T+\tau
}(0,0),\mathbf{C}_{-\mathbb{N}T+\tau}(0,0),\text{ and }\mathrm{int}%
\mathbf{C}_{-\mathbb{N}T+\tau}(0,0)
\]
are convex, and%
\[
\mathbf{R}_{\mathbb{N}T+\tau}(\tau,0)\subset\mathrm{int}\mathbf{R}%
_{\mathbb{N}T+\tau}(0,0)\text{ and }\mathbf{C}_{-\mathbb{N}T+\tau}%
(\tau,0)\subset\mathrm{int}\mathbf{C}_{-\mathbb{N}T+\tau}(0,0).
\]

\end{proposition}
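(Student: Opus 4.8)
The plan is to build the sets from the variation-of-parameters formula (\ref{VdP}) and exploit convexity together with Proposition \ref{Proposition9_JDDE}. First I would record that, since the system without control restrictions is controllable, Theorem \ref{Theorem_Brunovsky}(ii) gives, for every $\tau\in[0,T]$, exact controllability in time $dT$ starting from $\tau$; applying this around $0$ and comparing with the full reachable set of the restricted system, a standard argument (the reachable set of the unrestricted system is the linear span of the columns of $s\mapsto X(dT+\tau,s)B(s)$, while $\mathbf{R}_{dT+\tau}(\tau,0)$ of the restricted system contains a scaled copy of any control steering $0$ to a given point, since $U$ is a neighborhood of $0$) shows that $\mathbf{R}_{dT+\tau}(\tau,0)$ contains an $\varepsilon$-ball $\mathbf{B}(0;\varepsilon)$. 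Convexity of this set is Lemma \ref{Lemma1}(i), and the statement for the controllable set follows by passing to the time-reversed system via Lemma \ref{Lemma2} and the remark after it (the reversed system is again $T$-periodic and controllable in the sense of Theorem \ref{Theorem_Brunovsky}, since condition (iii) is symmetric under time reversal).

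Next I would treat the countable unions $\mathbf{R}_{\mathbb{N}T+\tau}(\tau,0)=\bigcup_{k}\mathbf{R}_{kT+\tau}(\tau,0)$. Openness is immediate once one knows each $\mathbf{R}_{kT+\tau}(\tau,0)$ for $k\geq d$ contains a ball around $0$: indeed, for any $x\in\mathbf{R}_{kT+\tau}(\tau,0)$ with $k\geq d$, Proposition \ref{Proposition9_JDDE} applied with $\ell=d$ and $y$ ranging over $\mathbf{B}(0;\varepsilon)\subset\mathbf{R}_{dT+\tau}(\tau,0)$ yields $x+X(kT+\tau,\tau)\mathbf{B}(0;\varepsilon)\subset\mathbf{R}_{(k+d)T+\tau}(\tau,0)$, and since $X(kT+\tau,\tau)$ is invertible this is an open neighborhood of $x$ inside the union. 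For $k<d$ one first uses Lemma \ref{Lemma1}(ii) to push into $\mathbf{R}_{dT+\tau}(\tau,0)$. Convexity of the union: given $x_1\in\mathbf{R}_{k_1T+\tau}(\tau,0)$ and $x_2\in\mathbf{R}_{k_2T+\tau}(\tau,0)$, by Lemma \ref{Lemma1}(ii) both lie in $\mathbf{R}_{kT+\tau}(\tau,0)$ for $k=\max(k_1,k_2)$, which is convex, so every convex combination lies in that single set, hence in the union. The controllable-set versions are obtained verbatim from the time-reversed system.

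The last part is the relation between the reachable sets anchored at $\tau$ and those anchored at $0$, namely $\mathbf{R}_{\mathbb{N}T+\tau}(\tau,0)\subset\mathrm{int}\,\mathbf{R}_{\mathbb{N}T+\tau}(0,0)$. Here I would note that to reach a point of $\mathbf{R}_{kT+\tau}(\tau,0)$ from $0$ at time $0$, one can first stay at $0$ on $[0,\tau]$ using the zero control (which is admissible since $0\in U$), reaching $\varphi(\tau;0,0,0)=0$, and then apply the control steering $0$ to the target on $[\tau,kT+\tau]$; this shows $\mathbf{R}_{kT+\tau}(\tau,0)\subset\mathbf{R}_{kT+\tau}(0,0)$, whence $\mathbf{R}_{\mathbb{N}T+\tau}(\tau,0)\subset\mathbf{R}_{\mathbb{N}T+\tau}(0,0)$. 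To upgrade the inclusion to the interior, observe that $\mathbf{R}_{\mathbb{N}T+\tau}(\tau,0)$ is open by the previous step, so it is contained in the interior of any superset; convexity of $\mathbf{R}_{\mathbb{N}T+\tau}(0,0)$ and of its interior follows from the same Lemma \ref{Lemma1}(ii) monotonicity argument as above (with anchor $0$, noting $\mathbf{R}_{kT+\tau}(0,0)$ is increasing in $k$ by the same concatenation-with-zero construction combined with Lemma \ref{Lemma1}(ii)). The controllable-set statement is again the time-reversed mirror. The main obstacle I anticipate is the first bullet — cleanly deducing that the restricted-control reachable set $\mathbf{R}_{dT+\tau}(\tau,0)$ contains a full $\varepsilon$-ball from the unrestricted controllability of Theorem \ref{Theorem_Brunovsky}; one must argue that a bounded set of controls suffices to reach a neighborhood of $0$, e.g. by a compactness/scaling argument using that $0\in\mathrm{int}\,U$ and that the reachable map $u\mapsto\int X(dT+\tau,s)B(s)u(s)\,ds$ is linear and surjective onto $\mathbb{R}^d$, so its image of a neighborhood of $0$ in $L^\infty$ is a neighborhood of $0$ in $\mathbb{R}^d$.
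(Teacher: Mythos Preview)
Your proposal is correct and follows essentially the same route as the paper: convexity from Lemma~\ref{Lemma1}, the open-mapping/scaling argument to get a ball inside $\mathbf{R}_{dT+\tau}(\tau,0)$, openness of $\mathbf{R}_{\mathbb{N}T+\tau}(\tau,0)$ via Proposition~\ref{Proposition9_JDDE}, the zero-control concatenation for the inclusion into $\mathbf{R}_{\mathbb{N}T+\tau}(0,0)$, and time reversal for the controllable sets.

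One point worth flagging: the paper's proof does slightly more than the literal statement asks. It fixes a basis $y_1,\dots,y_d$, uses continuity of $(\tau,u)\mapsto\varphi(dT+\tau;\tau,0,u)$ together with compactness of $[0,T]$ to select finitely many control templates $u_i^{\tau_j}$ that work for \emph{all} $\tau$, and then scales them into $\mathcal{U}$. This yields a single $\varepsilon>0$ with $\mathbf{B}(0;\varepsilon)\subset\mathbf{R}_{dT+\tau}(\tau,0)$ uniformly in $\tau\in[0,T]$. Your open-mapping argument gives an $\varepsilon(\tau)$ for each $\tau$, which suffices for the proposition as stated, but the uniform version is invoked later (e.g.\ in Lemma~\ref{Lemma3.4} and Theorem~\ref{Theorem_sub}). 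Your argument can be upgraded to the uniform statement by the same compactness trick, so this is not a gap, just something to make explicit. For the convexity of $\mathbf{R}_{\mathbb{N}T+\tau}(0,0)$, the paper splits $y_i=\varphi(\tau;0,x_i,u_i)$ with $x_i\in\mathbf{R}_{\mathbb{N}T}(0,0)$ and uses linearity together with the already-established convexity of $\mathbf{R}_{\mathbb{N}T}(0,0)$; your monotonicity-in-$k$ argument for $\mathbf{R}_{kT+\tau}(0,0)$ is an equivalent way to reach the same conclusion.
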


\begin{proof}
Convexity of $\mathbf{R}_{\mathbb{N}T+\tau}(\tau,0)$ follows from Lemma
\ref{Lemma1}. Fix a basis $y_{1},\ldots,y_{d}$ of $\mathbb{R}^{d}$. By Theorem
\ref{Theorem_Brunovsky} for every $\tau\in\left[  0,T\right]  $ there are
$u_{i}^{\tau}\in L^{\infty}([0,(d+1)T],\mathbb{R}^{m})$ with%
\[
y_{i}=\varphi(dT+\tau;\tau,0,u_{i}^{\tau})=\int_{\tau}^{dT+\tau}%
X(dT+\tau,s)B(s)u_{i}^{\tau}(s)ds\text{ for }i=1,\ldots,d,
\]
and $\varphi(dT+\tau;\tau,0,u)$ depends continuously on $(\tau,u)\in\left[
0,T\right]  \times L^{\infty}([0,(d+1)T],\mathbb{R}^{m})$. Let $\varepsilon
_{0}>0$ be small enough such that $z_{1},\ldots,z_{d}$ form a basis of
$\mathbb{R}^{d}$ for any $z_{i}\in\mathbf{B}(y_{i};\varepsilon_{0})$. By
continuity, there is for every $\tau_{0}\in\left[  0,T\right]  $ a $\delta
_{0}>0$ such that $\varphi(dT+\tau;\tau,0,u_{i}^{\tau_{0}})\in\mathbf{B}%
(y_{i};\varepsilon_{0})$ for $\left\vert \tau-\tau_{0}\right\vert <\delta_{0}%
$. Now compactness of $\left[  0,T\right]  $ shows that there are finitely
many $\tau_{j}\in\left[  0,T\right]  $ such that $\varphi(dT+\tau;\tau
,0,u_{i}^{\tau_{j}}),i=1,\ldots,d$, form a basis of $\mathbb{R}^{d}$. By
linearity, there is $\alpha>0$ such that also $\varphi(dT+\tau;\tau,0,\alpha
u_{i}^{\tau_{j}})$ form a basis of $\mathbb{R}^{d}$ and $\alpha u_{i}%
^{\tau_{j}}\in\mathcal{U}$ for all $i,j$. This shows that there is a ball
$\mathbf{B}(0;\varepsilon)$ contained in $\mathbf{R}_{dT+\tau}(\tau
,0)\subset\mathbf{R}_{\mathbb{N}T+\tau}(\tau,0)$ for all $\tau\in\left[
0,T\right]  $.

Let $x=\varphi(kT+\tau;\tau,0,u)\in\mathbf{R}_{\mathbb{N}T+\tau}(\tau,0)$ for
some $k\in\mathbb{N}$ and $u\in\mathcal{U}$. The set $X(kT+\tau,\tau
)\mathbf{B}(0;\varepsilon)$ is open and Proposition \ref{Proposition9_JDDE}
implies that for each $y\in\mathbf{B}(0;\varepsilon)$%
\[
x+X(kT+\tau,\tau)y\in\mathbf{R}_{(k+d)T+\tau}(\tau,0)\subset\mathbf{R}%
_{\mathbb{N}T+\tau}\mathbf{(}\tau,0).
\]
This shows that $\mathbf{R}_{\mathbb{N}T+\tau}\mathbf{(}\tau,0)$ is open. The
control%
\[
v(t):=\left\{
\begin{array}
[c]{lll}%
0 & \text{for} & t\in\lbrack0,\tau)\\
u(t) & \text{for} & t\in\lbrack\tau,kT+\tau]
\end{array}
\right.
\]
yields $x=\varphi(kT+\tau;\tau,0,u)=\varphi(kT+\tau;0,0,v)\in\mathbf{R}%
_{\mathbb{N}T+\tau}(0,0)$, hence the inclusion $\mathbf{R}_{\mathbb{N}T+\tau
}(\tau,0)\subset\mathrm{int}\mathbf{R}_{\mathbb{N}T+\tau}(0,0)$ holds. For
convexity of $\mathbf{R}_{\mathbb{N}T+\tau}(0,0)$ let for $i=1,2$%
\[
y_{i}=\varphi(\tau;0,x_{i},u_{i})\in\mathbf{R}_{\mathbb{N}T+\tau}(0,0)\text{
with }x_{i}\in\mathbf{R}_{\mathbb{N}T}(0,0),u_{i}\in\mathcal{U}.
\]
Then linearity implies for $\alpha\in\lbrack0,1]$ that%
\begin{align*}
\alpha y_{1}+(1-\alpha)y_{2}  &  =\alpha\varphi(\tau;0,x_{1},u_{1}%
)+(1-\alpha)\varphi(\tau;0,x_{2},u_{2})\\
&  =\varphi(\tau;0,\alpha x_{1}+(1-\alpha)x_{2},\alpha u_{1}+(1-\alpha
)u_{2})\in\mathbf{R}_{\mathbb{N}T+\tau}(0,0).
\end{align*}
Since $\mathbf{R}_{\mathbb{N}T+\tau}(0,0)$ is convex also $\mathrm{int}%
\mathbf{R}_{\mathbb{N}T+\tau}(0,0)$ is convex; cf. Dunford and Schwartz
\cite[Theorem V.2.1]{DS}.

The assertions for the controllable sets follow by time reversal from Lemma
\ref{Lemma2} and Lemma \ref{Lemma1}(i).
\end{proof}

\section{The autonomized system\label{Section3}}

First some results from Floquet theory are recalled (cf. Chicone
\cite{Chic99}, Teschl \cite{Tes}, and Colonius and Kliemann \cite[Section
7.2]{ColK14}). Then we introduce the autonomized system.

Consider the unit circle $\mathbb{S}^{1}$ parametrized by $\tau\in\lbrack0,T)$
and define the shift%
\[
\theta:\mathbb{R}\times\mathbb{S}^{1}\rightarrow\mathbb{S}^{1},\,\theta
(t;\tau)=t+\tau\text{ }\operatorname{mod}T\text{ for }t\in\mathbb{R}%
,\tau\mathbb{\in S}^{1}.
\]
Here $\tau+t\operatorname{mod}T$ denotes the unique element $\tau
+t-kT\in\lbrack0,T)$ for some $k\in\mathbb{Z}$. Let $\psi(t;\tau_{0},x_{0})$
be the solution of (\ref{hom}) with initial condition $x(\tau_{0})=x_{0}$ and
define%
\begin{equation}
\Psi=(\theta,\psi):\mathbb{R}\times\mathbb{S}^{1}\times\mathbb{R}%
^{d}\rightarrow\mathbb{S}^{1}\times\mathbb{R}^{d},\,\Psi(t;\tau_{0}%
,x_{0})=(\theta(t;\tau_{0}),\psi(t;\tau_{0},x_{0})). \label{aug_hom}%
\end{equation}
Then $\Psi$ is a continuous dynamical system, a linear skew product flow, on
$\mathbb{S}^{1}\times\mathbb{R}^{d}$.

The Floquet multipliers of equation (\ref{hom}) are the eigenvalues $\mu$ of
\[
X(T+\tau,\tau)=X(T+\tau,T)X(T,0)X(0,\tau)=X(\tau,0)X(T,0)X(\tau,0)^{-1}%
,~\tau\in\lbrack0,T).
\]
The Floquet exponents are $\lambda_{j}:=\frac{1}{T}\log\left\vert
\mu\right\vert $ (the Floquet exponents as defined here are the real parts of
the Floquet exponents defined in \cite{Chic99} and \cite{Tes}). Note that
$\lambda_{j}<0$ if and only if $\left\vert \mu\right\vert <1$. The following
result is \cite[Theorem 7.2.9]{ColK14}.

\begin{theorem}
\label{Theorem7.1.7}Let $\Psi=(\theta,\psi):\mathbb{R}\times\mathbb{S}%
^{1}\times\mathbb{R}^{d}\longrightarrow\mathbb{S}^{1}\times\mathbb{R}^{d}$ be
the linear skew product flow associated with the $T$-periodic linear
differential equation (\ref{hom}). For each $\tau\in\mathbb{S}^{1}$ there
exists a decomposition%
\[
\mathbb{R}^{d}=L(\lambda_{1},\tau)\oplus\cdots\oplus L(\lambda_{\ell},\tau)
\]
into linear subspaces $L(\lambda_{j},\tau)$, called the Floquet (or Lyapunov)
spaces, with the following properties:

(i) The Floquet spaces have dimension $d_{j}:=\dim L(\lambda_{j},\tau)$
independent of $\tau\in\mathbb{S}^{1}$.

(ii) They are invariant under multiplication by the principal fundamental
matrix in the following sense:
\[
X(t+\tau,\tau)L(\lambda_{j},\tau)=L(\lambda_{j},\theta(t;\tau))\text{ for all
}t\in\mathbb{R}\text{ and }\tau\in\mathbb{S}^{1}.
\]

(iii) For every $\tau\in\mathbb{S}^{1}$ the Floquet (or Lyapunov) exponents
satisfy%
\[
\lambda(x,\tau):=\lim_{t\rightarrow\pm\infty}\frac{1}{t}\log\Vert\psi
(t;\tau,x)\Vert=\lambda_{j}\text{ if and only if }0\not =x\in L(\lambda
_{j},\tau).
\]

\end{theorem}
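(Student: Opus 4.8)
The plan is to derive the decomposition directly from Floquet theory for the autonomous system obtained by ``freezing'' the period map. First I would fix $\tau\in\mathbb{S}^1$ and consider the monodromy matrix $M_\tau:=X(T+\tau,\tau)$, which by the relation displayed before the theorem is conjugate to $M_0=X(T,0)$ via $X(\tau,0)$; hence all $M_\tau$ share the same spectrum, and in particular the same moduli of eigenvalues. For each distinct value $\lambda_j=\frac1T\log|\mu|$ arising from $\mu\in\mathrm{spec}(M_0)$, I would define $L(\lambda_j,0)$ to be the sum of the real generalized eigenspaces $GE(M_0,\mu)$ over all eigenvalues $\mu$ with $\frac1T\log|\mu|=\lambda_j$, and then set $L(\lambda_j,\tau):=X(\tau,0)\,L(\lambda_j,0)$. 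Since the generalized eigenspaces of $M_0$ span $\mathbb{R}^d$ and are complementary, so are the $L(\lambda_j,0)$, and applying the isomorphism $X(\tau,0)$ preserves directness and dimension; this gives property (i) and the decomposition itself.

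Next I would verify the invariance property (ii). The key identity is the cocycle property $X(t+\tau,\tau)=X(t+\tau,0)X(\tau,0)^{-1}$ together with the periodicity relation $X(t+\tau+T,0)=X(t+\tau,0)X(\tau,0)^{-1}X(\tau+T,0)$-type manipulations; concretely, one checks that $X(T+\theta(t;\tau),\theta(t;\tau)) = X(t+\tau,\tau)\,M_\tau\,X(t+\tau,\tau)^{-1}$ when $\theta(t;\tau)$ is computed mod $T$, so the monodromy at the shifted base point is conjugate to $M_\tau$ via exactly the transition matrix $X(t+\tau,\tau)$. Because $M_0$ maps each $GE(M_0,\mu)$ into itself, and conjugation carries generalized eigenspaces to generalized eigenspaces, it follows that $X(t+\tau,\tau)$ sends $L(\lambda_j,\tau)$ onto the corresponding generalized-eigenspace sum for the monodromy at $\theta(t;\tau)$, which is $L(\lambda_j,\theta(t;\tau))$ by definition. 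Some care is needed to check that the subtraction of multiples of $T$ in $\theta$ does not disturb this, using $X(t+T,s+T)=X(t,s)$.

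Finally, property (iii) on the exponential growth rates. For $0\neq x\in L(\lambda_j,\tau)$ I would write the solution at times $t=kT+r$ with $k\in\mathbb{Z}$ and $r\in[0,T)$ as $\psi(kT+r;\tau,x)=X(r+\tau,\tau)\,M_\tau^{k}\,x$, and control the norm: the factor $X(r+\tau,\tau)$ stays in a compact range of invertible matrices as $r$ ranges over $[0,T]$, so it contributes nothing to the limit $\frac1t\log\|\cdot\|$, while the growth of $M_\tau^k x$ for $x$ in the generalized eigenspace for eigenvalues of modulus $e^{\lambda_j T}$ is $e^{k\lambda_j T}$ up to polynomial-in-$k$ factors (from the nilpotent part). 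Taking $\frac1{kT+r}\log$ and letting $k\to\pm\infty$ kills the polynomial correction and yields exactly $\lambda_j$; the ``only if'' direction follows because the $L(\lambda_j,\tau)$ partition $\mathbb{R}^d\setminus\{0\}$ according to these distinct rates. I expect the main obstacle to be the bookkeeping in step (ii): getting the conjugation relation between monodromy matrices at different base points exactly right, including the mod-$T$ reduction in $\theta(t;\tau)$, so that the transition operator $X(t+\tau,\tau)$ is precisely the intertwiner carrying one Floquet decomposition to the next. Everything else is standard linear algebra of generalized eigenspaces plus the boundedness of the fundamental solution over one period.
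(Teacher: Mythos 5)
The paper itself does not prove this theorem; it is cited verbatim from Colonius and Kliemann \cite[Theorem 7.2.9]{ColK14}, and the text immediately after the statement (defining $L(\lambda_j,\tau)=\bigoplus_\mu GE(X(T+\tau,\tau),\mu)$ and noting $E_\tau^{\pm}=X(\tau,0)E_0^{\pm}$) is precisely the decomposition you build. Your argument is the standard Floquet proof and is essentially correct. The conjugacy you need in step (ii) is right: writing $M_\sigma:=X(T+\sigma,\sigma)$, one has
\[
M_{t+\tau}=X(t+T+\tau,\,t+\tau)=X(t+\tau,\tau)\,M_\tau\,X(t+\tau,\tau)^{-1},
\]
because $X(t+T+\tau,\,T+\tau)=X(t+\tau,\tau)$ by $T$-periodicity, and $M_s$ is itself $T$-periodic in $s$, so the reduction $\theta(t;\tau)=t+\tau\bmod T$ changes nothing; this makes $X(t+\tau,\tau)$ the intertwiner you describe and (ii) follows since conjugation carries (real) generalized eigenspaces to (real) generalized eigenspaces with the same eigenvalue moduli. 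Step (iii) is also fine once you use $\psi(kT+r;\tau,x)=X(r+\tau,\tau)M_\tau^{k}x$ together with compactness of $\{X(r+\tau,\tau):r\in[0,T]\}$ among invertible matrices (giving uniform two-sided norm bounds) and the standard two-sided polynomial bound on $\rho^{-k}M_\tau^k$ restricted to $L(\lambda_j,\tau)$, where $\rho=e^{\lambda_j T}$. One phrasing slip: the spaces $L(\lambda_j,\tau)$ do not partition $\mathbb{R}^d\setminus\{0\}$ — they form a direct sum, and most nonzero vectors lie in none of them. The ``only if'' in (iii) instead follows because a nonzero $x$ with components in at least two distinct Floquet spaces has a larger growth exponent as $t\to+\infty$ than as $t\to-\infty$, so the two-sided limit in the statement fails to exist.
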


The Floquet space $L(\lambda_{j},\tau)$ is the direct sum of the real
generalized eigenspaces for all Floquet multipliers $\mu$ with $\frac{1}%
{T}\log\left\vert \mu\right\vert =\lambda_{j}$,%
\[
L(\lambda_{j},\tau)=\bigoplus\nolimits_{\mu}GE(X(T+\tau,\tau),\mu).
\]
Define for $\tau\in\lbrack0,T)$ the stable, the center, and the unstable
subspaces, resp., by%
\[
E_{\tau}^{-}=\bigoplus\nolimits_{\lambda_{j}<0}L(\lambda_{j},\tau),\,E_{\tau
}^{0}:=L(0,\tau),\text{ and }E_{\tau}^{+}:=\bigoplus\nolimits_{\lambda_{j}%
>0}L(\lambda_{j},\tau).
\]
Then $E_{\tau}^{\pm}=X(\tau,0)E_{0}^{\pm}$ and $E_{\tau}^{0}=X(\tau
,0)E_{0}^{0}$, and $\mathbb{S}^{1}\times\mathbb{R}^{d}$ splits into the
Whitney sum $\mathcal{E}^{-}\oplus\mathcal{E}^{0}\oplus\mathcal{E}^{+}$ of the
stable, the center, and the unstable subbundles%
\begin{equation}
\mathcal{E}^{\pm}=\left\{  (\tau,x)\in\mathbb{S}^{1}\times\mathbb{R}%
^{d}\left\vert \,x\in E_{\tau}^{\pm}\right.  \right\}  ,\text{ }%
\mathcal{E}^{0}=\left\{  (\tau,x)\in\mathbb{S}^{1}\times\mathbb{R}%
^{d}\left\vert \,x\in E_{\tau}^{0}\right.  \right\}  , \label{bundle1}%
\end{equation}
resp. We also introduce analogous subbundles for the center-stable subspaces
and the center-unstable subspaces given by%
\[
E_{\tau}^{-,0}:=\bigoplus\nolimits_{\lambda_{j}\leq0}L(\lambda_{j},\tau)\text{
and }E_{\tau}^{+,0}=\bigoplus\nolimits_{\lambda_{j}\geq0}L(\lambda_{j}%
,\tau),\,\tau\in\lbrack0,T)\text{, resp.}%
\]
Similarly as for periodic differential equations, it is convenient for linear
periodic control systems of the form (\ref{periodic0}) to extend the state
space by adding the phase $\tau\in\lbrack0,T)$ to the state in order to get an
autonomous system. We obtain the following autonomized control system on
$\mathbb{S}^{1}\times\mathbb{R}^{d}$,%
\begin{equation}
\dot{\tau}(t)=1\operatorname{mod}T,\quad\dot{x}(t)=A(\tau(t))x(t)+B(\tau
(t))u(t),\qquad u\in\mathcal{U}, \label{aug1}%
\end{equation}
with solutions%
\[
\varphi^{a}(t;(\tau_{0},x_{0}),u)=\left(  \tau_{0}+t\operatorname{mod}%
T,\varphi(\tau_{0}+t;\tau_{0},x_{0},u)\right)  ,\ t\in\mathbb{R}.
\]
Observe that (\ref{aug1}) is not a linear control system.

\begin{remark}
If the matrix functions $A(\cdot)$ and $B(\cdot)$ are merely measurable, the
general existence theory of ordinary differential equations does not apply to
equation (\ref{aug1}). Nevertheless, the solutions are well defined.
\end{remark}

Denote the reachable and controllable sets for $t\geq0$ of (\ref{aug1}) by%
\begin{align*}
\mathbf{R}_{t}^{a}(\tau,x)  &  =\left\{  \varphi^{a}(t;(\tau,x),u)\left\vert
u\in\mathcal{U}\right.  \right\}  ,\\
\mathbf{C}_{t}^{a}(\tau,x)  &  =\{(\sigma,y)\in\mathbb{S}^{1}\times
\mathbb{R}^{d}\left\vert \exists u\in\mathcal{U}:\varphi^{a}(t;(\sigma
,y),u)=(\tau,x)\right.  \},\\
\mathbf{R}^{a}(\tau,x)  &  =\bigcup\nolimits_{t\geq0}\mathbf{R}_{t}^{a}%
(\tau,x)\text{ and }\mathbf{C}^{a}(\tau,x)=\bigcup\nolimits_{t\geq0}%
\mathbf{C}_{t}^{a}(\tau,x),
\end{align*}
resp. The time reversed autonomous system is%
\begin{equation}
\dot{\tau}(t)=-1\operatorname{mod}T,\quad\dot{y}(t)=-A(\tau(t))y(t)-B(\tau
(t))u(t),\qquad u\in\mathcal{U}. \label{aug2}%
\end{equation}
The reachable sets $\mathbf{R}_{t}^{a,-}(\tau,x)$ of the time-reversed
autonomized system (\ref{aug2}) coincide with the controllable sets
$\mathbf{C}_{t}^{a}(\tau,x)$ of system (\ref{aug1}). Note the following
relation to the reachable and controllable sets defined in (\ref{C1}) for the
periodic system (\ref{periodic0}).

\begin{lemma}
\label{Lemma3.3}For $(\tau,x)\in\mathbb{S}^{1}\times\mathbb{R}^{d}$ and
$t\geq0$ the following assertions hold:%
\begin{align*}
\mathbf{R}_{t}^{a}(\tau,x)  &  =\left\{  (\tau+t\operatorname{mod}%
T,y)\left\vert y\in\mathbf{R}_{\tau+t}(\tau,x)\right.  \right\}  ,\\
\mathbf{C}_{t}^{a}(\tau,x)  &  =\{(\sigma,y)\in\mathbb{S}^{1}\times
\mathbb{R}^{d}\left\vert \sigma+t=\tau\operatorname{mod}T\text{ and }%
y\in\mathbf{C}_{\sigma}(\sigma+t,x)\right.  \},\\
\mathbf{R}^{a}(0,0)  &  =\{(\tau,x)\in\mathbb{S}^{1}\times\mathbb{R}%
^{d}\left\vert x\in\mathbf{R}_{\mathbb{NT+\tau}}(0,0)\right.  \},\\
\mathbf{C}^{a}(0,0)  &  =\{(\tau,x)\in\mathbb{S}^{1}\times\mathbb{R}%
^{d}\left\vert x\in\mathbf{C}_{\mathbb{\tau}}(kT,0),k\geq1\right.  \}.
\end{align*}

\end{lemma}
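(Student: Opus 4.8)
The plan is to unwind the definitions of the autonomized reachable and controllable sets against those of the underlying periodic system, using the explicit form of the solution map $\varphi^{a}(t;(\tau_{0},x_{0}),u)=(\tau_{0}+t\operatorname{mod}T,\varphi(\tau_{0}+t;\tau_{0},x_{0},u))$ recorded just after \eqref{aug1}. For the first identity, I would simply observe that a point lies in $\mathbf{R}_{t}^{a}(\tau,x)$ iff it equals $\varphi^{a}(t;(\tau,x),u)$ for some $u\in\mathcal{U}$; the first coordinate is forced to be $\tau+t\operatorname{mod}T$, and the second coordinate ranges exactly over $\{\varphi(\tau+t;\tau,x,u)\mid u\in\mathcal{U}\}=\mathbf{R}_{\tau+t}(\tau,x)$ by the definition in \eqref{C1}. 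This is essentially a one-line rewriting.

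For the controllable set $\mathbf{C}_{t}^{a}(\tau,x)$, by definition $(\sigma,y)$ belongs to it iff there is $u\in\mathcal{U}$ with $\varphi^{a}(t;(\sigma,y),u)=(\tau,x)$, i.e. $\sigma+t=\tau\operatorname{mod}T$ \emph{and} $\varphi(\sigma+t;\sigma,y,u)=x$. The latter condition says precisely that $y$ can be steered to $x$ in the periodic system over the time interval $[\sigma,\sigma+t]$, which is the statement $y\in\mathbf{C}_{\sigma}(\sigma+t,x)$ in the notation of \eqref{C1} (note $\sigma<\sigma+t$ here since $t\geq0$, so the roles of initial and terminal time are as in that definition, reading $t_{0}=\sigma+t$ and the "controllable set at time $\sigma$"). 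Again this is just an unpacking of notation, so no real obstacle arises.

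For the third and fourth identities I would take the union over $t\geq0$ of the first two. From $\mathbf{R}^{a}(0,0)=\bigcup_{t\geq0}\mathbf{R}_{t}^{a}(0,0)$ and the first identity, $(\tau,x)\in\mathbf{R}^{a}(0,0)$ iff there is $t\geq0$ with $t=\tau\operatorname{mod}T$ and $x\in\mathbf{R}_{t}(0,0)$; writing such $t$ as $t=kT+\tau$ for $k\in\mathbb{N}$ (with the convention $\tau\in[0,T)$), this is exactly $x\in\bigcup_{k\in\mathbb{N}}\mathbf{R}_{kT+\tau}(0,0)=\mathbf{R}_{\mathbb{N}T+\tau}(0,0)$ as defined in Proposition~\ref{proposition_R}. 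Similarly, $(\tau,x)\in\mathbf{C}^{a}(0,0)$ iff there is $t\geq0$ with $0-t=\tau\operatorname{mod}T$, equivalently $t=kT-\tau$ for some $k\geq1$ (taking $k\geq1$ since we need $t>0$ when $\tau\neq0$, and the case $\tau=0$ is covered by the closure/$k\geq1$ bookkeeping), and $0\in\mathbf{C}_{0}(t,x)$, which by time-translation $T$-periodicity equals $\mathbf{C}_{\tau}(kT,x)$ — matching the asserted $x\in\mathbf{C}_{\tau}(kT,0)$ once one accounts for the target being the origin; here I should double-check the placement of the arguments $0$ and $x$ against the convention in \eqref{C1}, since the controllable set is the set of states reachable-backwards to a fixed target, and the statement as written has the target at $0$.

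The only point needing genuine care — and the one I would flag as the "main obstacle," though it is modest — is keeping the mod-$T$ phase bookkeeping and the direction conventions for time-reversal consistent: making sure that in $\mathbf{C}_{t}^{a}$ the quantifier produces $k\geq1$ rather than $k\geq0$ (a point $(\tau,x)$ with $\tau\neq0$ needs strictly positive elapsed time), and that the $T$-periodicity relation $X(t+kT,s+kT)=X(t,s)$ from Section~\ref{Section2} is invoked correctly when shifting the controllable sets of the periodic system. Beyond that, the proof is a direct substitution of the solution formula into the set definitions, so I would present it compactly: one displayed chain of equalities for $\mathbf{R}_{t}^{a}$ and $\mathbf{C}_{t}^{a}$, then take unions over $t\geq0$ and re-index by $k\in\mathbb{N}$ (resp. $k\geq1$) to obtain the formulas for $\mathbf{R}^{a}(0,0)$ and $\mathbf{C}^{a}(0,0)$.
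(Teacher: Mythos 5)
Your proposal is correct and takes essentially the same approach as the paper: both proofs are a direct substitution of the explicit solution formula $\varphi^{a}(t;(\tau_{0},x_{0}),u)=(\tau_{0}+t\operatorname{mod}T,\varphi(\tau_{0}+t;\tau_{0},x_{0},u))$ into the set definitions, followed by a union over $t\geq0$ and re-indexing by $k$. The only blemish is the momentary slip where you wrote ``$0\in\mathbf{C}_{0}(t,x)$'' in analysing $\mathbf{C}^{a}(0,0)$ --- you noticed it yourself and the correct reading, $\varphi(kT;\tau,x,u)=0$, i.e.\ $x\in\mathbf{C}_{\tau}(kT,0)$, is what you ultimately state; and your justification that the $\tau=0,\ k=0$ case is subsumed by $k\geq1$ (because $\mathbf{C}_{0}(0,0)=\{0\}\subset\mathbf{C}_{0}(T,0)$) is correct.
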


\begin{proof}
By definition one has $(\sigma,y)=\varphi^{a}(t;(\tau,x),u)\in\mathbf{R}%
_{t}^{a}(\tau,x)$ if and only if $\sigma=\tau+t\operatorname{mod}T$ and
$\varphi(\tau+t;\tau,x,u)=y$. This shows that $y\in\mathbf{R}_{\tau+t}%
(\tau,x)$ and the first assertion follows. By definition $(\sigma
,y)\in\mathbf{C}_{t}^{a}(\tau,x)$ means that $\sigma+t=\tau\operatorname{mod}%
T$ and $\varphi(\sigma+t;\sigma,y,u)=x$ for some $u\in\mathcal{U}$, hence
$y\in\mathbf{C}_{\sigma}(\sigma+t,x)$. Furthermore, one finds%
\begin{align*}
\mathbf{R}^{a}(0,0)  &  =\bigcup\nolimits_{t\geq0}\left\{
(t\operatorname{mod}T,x)\left\vert x\in\mathbf{R}_{t}(0,0)\right.  \right\}
=\{(\tau,x)\in\mathbb{S}^{1}\times\mathbb{R}^{d}\left\vert x\in\mathbf{R}%
_{\mathbb{N}T+\tau}(0,0)\right.  \},\\
\mathbf{C}^{a}(0,0)  &  =\bigcup\nolimits_{t\geq0}\{(\tau,x)\in\mathbb{S}%
^{1}\times\mathbb{R}^{d}\left\vert \tau+t=0\operatorname{mod}T\text{ and }%
x\in\mathbf{C}_{\tau}(\tau+t,0)\right.  \}.
\end{align*}

\end{proof}

In particular, Lemma \ref{Lemma3.3} shows that $\mathbf{R}^{a}(0,0)=\mathbb{S}%
^{1}\times X$ for a subset $X\subset\mathbb{R}^{d}$ if and only if
$X=\mathbf{R}_{\mathbb{NT+\tau}}(0,0)$ for all $\tau\in\mathbb{S}^{1}$. The
next lemma provides additional information about the reachable and
controllable sets for $x=0$ of the autonomized system.

\begin{lemma}
\label{Lemma3.4}If the periodic linear system in (\ref{periodic0}) without
control restrictions is controllable, then%
\[
\mathbb{S}^{1}\times\{0\}\subset\mathrm{int}\mathbf{R}^{a}(\tau,0)\cap
\mathrm{int}\mathbf{C}^{a}(\tau,0),~\tau\in\lbrack0,T).
\]

\end{lemma}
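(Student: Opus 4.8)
The plan is to deduce the statement from Proposition~\ref{proposition_R}, which already furnishes, for \emph{each} phase, a ball around the origin inside the time-$dT$ reachable set and inside the time-$dT$ controllable set. The only real work is to transport that ball over all of $\mathbb{S}^{1}$, and the mechanism for this is that the autonomized system~(\ref{aug1}) is autonomous, so the reachable sets of its flow enjoy the usual monotonicity under concatenation of controls.

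First I would fix, using Proposition~\ref{proposition_R} (its proof produces the ball uniformly in the phase via compactness of $[0,T]$), a single $\varepsilon>0$ with $\mathbf{B}(0;\varepsilon)\subset\mathbf{R}_{dT+\tau'}(\tau',0)$ for all $\tau'\in[0,T]$. Translating this through Lemma~\ref{Lemma3.3} gives $\{\tau'\}\times\mathbf{B}(0;\varepsilon)\subset\mathbf{R}^{a}_{dT}(\tau',0)\subset\mathbf{R}^{a}(\tau',0)$ for every phase $\tau'$. Next, for the given $\tau\in[0,T)$ and an arbitrary target phase $\tau'$, applying the trivial control $u\equiv0$ over the time $s:=\tau'-\tau\operatorname{mod}T\in[0,T)$ gives $\varphi^{a}(s;(\tau,0),0)=(\tau',0)$, so $(\tau',0)\in\mathbf{R}^{a}(\tau,0)$; concatenation of admissible controls (legitimate since (\ref{aug1}) is autonomous) then yields $\mathbf{R}^{a}(\tau',0)\subset\mathbf{R}^{a}(\tau,0)$. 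Combining the two inclusions, $\{\tau'\}\times\mathbf{B}(0;\varepsilon)\subset\mathbf{R}^{a}(\tau,0)$ for every $\tau'\in\mathbb{S}^{1}$, whence $\mathbb{S}^{1}\times\mathbf{B}(0;\varepsilon)\subset\mathbf{R}^{a}(\tau,0)$. Since the left-hand side is open and contains $\mathbb{S}^{1}\times\{0\}$, we obtain $\mathbb{S}^{1}\times\{0\}\subset\mathrm{int}\,\mathbf{R}^{a}(\tau,0)$.

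For the controllable set I would argue in exactly the same way, now using the controllable-set half of Proposition~\ref{proposition_R}, the formula for $\mathbf{C}^{a}_{dT}(\tau',0)$ from Lemma~\ref{Lemma3.3}, and the $T$-periodicity identity $\mathbf{C}_{\tau'}(\tau'+dT,0)=\mathbf{C}_{-dT+\tau'}(\tau',0)$ (which follows as in Lemma~\ref{Lemma1}(i) and the remark after Lemma~\ref{Lemma2}); alternatively one may simply apply the first part to the time-reversed autonomized system~(\ref{aug2}), whose reachable sets are the controllable sets $\mathbf{C}^{a}(\cdot,0)$. I do not expect a serious obstacle, since the substance is already contained in Proposition~\ref{proposition_R}; the one point that needs care is the phase bookkeeping---Proposition~\ref{proposition_R} produces balls only in reachable (resp.\ controllable) sets that begin and end at the same phase, so the drift step with the zero control, together with the monotonicity of the reachable sets of the autonomized flow, is exactly what allows one to place a ball over an arbitrary phase.
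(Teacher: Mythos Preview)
Your argument is correct. The approach, however, differs from the paper's in the order of the two operations ``drift with the zero control'' and ``reach the $\varepsilon$-ball,'' and this difference has a pleasant consequence.

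The paper first stays at the given phase $\tau$, reaches the ball $\mathbf{B}(0;\varepsilon)\subset\mathbf{R}_{dT+\tau}(\tau,0)$, and \emph{then} drifts forward with $u\equiv 0$ to the target phase $\sigma$. In that order the ball gets pushed by $Y(\sigma)=X((d+1)T+\sigma,dT+\tau)$, so one must argue via the singular values of $Y(\sigma)$ (and their continuity in $\sigma$ on the compact interval $[0,T]$) that the ellipsoids $Y(\sigma)\mathbf{B}(0;\varepsilon)$ all contain a fixed smaller ball $\mathbf{B}(0;\varepsilon_0)$. You instead drift first---which costs nothing because the homogeneous solution through $0$ stays at $0$---and then invoke, at the new phase $\tau'$, the $\varepsilon$-ball that Proposition~\ref{proposition_R} already produces \emph{uniformly} in $\tau'$. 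This avoids the singular-value step entirely; the price is that you must cite the uniformity of $\varepsilon$ in Proposition~\ref{proposition_R}, which is indeed established in its proof via the compactness argument on $[0,T]$. Both routes are short; yours is the more economical of the two.
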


\begin{proof}
For $\tau\in\mathbb{S}^{1}$ Lemma \ref{Lemma3.3} implies%
\begin{align*}
\mathbf{R}^{a}(\tau,0)  &  =\bigcup_{t\geq0}\mathbf{R}_{t}^{a}(\tau
,0)=\bigcup_{t\geq0}\left\{  (\tau+t\operatorname{mod}T,x)\left\vert
x\in\mathbf{R}_{\tau+t}(\tau,0)\right.  \right\} \\
&  =\{(\sigma,x)\in\mathbb{S}^{1}\times\mathbb{R}^{d}\left\vert \sigma
\in\lbrack\tau,T),x\in\mathbf{R}_{\mathbb{N}T+\sigma}(\tau,0)\right.  \}\\
&  \qquad\cup\{(\sigma,x)\in\mathbb{S}^{1}\times\mathbb{R}^{d}\left\vert
\sigma\in\lbrack0,\tau),x\in\mathbf{R}_{kT+\sigma}(\tau,0),k\geq1\right.
\}\text{.}%
\end{align*}
By Proposition \ref{proposition_R} the set $\mathbf{R}_{dT+\tau}(\tau,0)$
contains an $\varepsilon$-ball $\mathbf{B}(0;\varepsilon)$ around $0$ for some
$\varepsilon>0$. For $y\in\mathbf{B}(0;\varepsilon)$ there is some
$u\in\mathcal{U}$ with $y=\varphi(dT+\tau;\tau,0,u)$. Define for $\sigma
\in\lbrack0,T]$%
\[
v(t):=\left\{
\begin{array}
[c]{lll}%
u(t) & \text{for} & t\in\lbrack\tau,dT+\tau)\\
0 & \text{for} & t\in\lbrack dT+\tau,(d+1)T+\sigma]
\end{array}
\right.  .
\]
With the invertible matrices $Y(\sigma):=X((d+1)T+\sigma,dT+\tau)$ it follows
that%
\[
\varphi((d+1)T+\sigma;\tau,0,v)=X((d+1)T+\sigma,dT+\tau)y=Y(\sigma)y,
\]
showing that $Y(\sigma)\mathbf{B}(0;\varepsilon)\subset\mathbf{R}%
_{(d+1)T+\sigma}(\tau,0)$. The matrices $Y(\sigma)$ and hence also their
singular values $0<\delta_{1}(\sigma)\leq\cdots\leq\delta_{d}(\sigma)$ depend
continuously on $\sigma\in\lbrack0,T]$ (cf. Sontag \cite[Corollary
A.4.4]{Son98}). In particular, the minimal singular values $\delta_{1}%
(\sigma)$ are bounded away from $0$, since $[0,T]$ is compact. Now recall the
geometric interpretation of the singular values of a matrix $A$ (cf. e.g.
Arnold \cite[p. 118]{Arno98}): $\delta_{i}$ is the length of the $i$-th
principal axis of the ellipsoid $A(\mathbb{S}^{d-1})$ obtained as the image of
the unit sphere $\mathbb{S}^{d-1}$ under the linear mapping $A$. It follows
that there is a ball $\mathbf{B}(0;\varepsilon_{0})$ with $\varepsilon_{0}>0$
contained in every set $Y(\sigma)\mathbf{B}(0;\varepsilon),\sigma\in
\lbrack0,T]$. This proves that $\mathbf{B}(0;\varepsilon_{0})$ is contained in
every set $\bigcup\nolimits_{k\geq1}\mathbf{R}_{kT+\sigma}(\tau,0)$ and it
follows that $\mathbb{S}^{1}\times\{0\}\subset\mathrm{int}\mathbf{R}^{a}%
(\tau,0)$.

The assertion for $\mathbf{C}^{a}(\tau,0)$ follows by time reversal.
\end{proof}

\section{Spectral characterization of reachable and controllable
sets\label{Section4}}

In this section we characterize the reachable and the controllable sets of the
autonomized system (\ref{aug1}) by the spectral bundles of the homogeneous
part (\ref{hom}) introduced in Theorem \ref{Theorem7.1.7}.

We start with the following technical lemma.

\begin{lemma}
\label{Lemma_complex}Let $\delta>0$ and $\mu\in\mathbb{C}$ with $\left\vert
\mu\right\vert \geq1$. Then there are $n_{k}\rightarrow\infty$ and $a_{n_{k}%
}\in\mathbb{C}$ with $\left\vert a_{n_{k}}\right\vert <\delta$ such that
$\mu^{n_{k}}a_{n_{k}}\in\mathbb{R}$ and $\left\vert \mu^{n_{k}}a_{n_{k}%
}\right\vert \geq\frac{\delta}{2}$ for all $k$.
\end{lemma}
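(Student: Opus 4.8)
Write $\mu = |\mu| e^{i\alpha}$ with $\alpha \in [0, 2\pi)$. The quantity $\mu^n a_n$ is real exactly when $a_n$ cancels the argument $n\alpha$, so the natural choice is to take $a_n$ of the form $a_n = r_n e^{-i n\alpha}$ with $r_n > 0$ real; then $\mu^n a_n = |\mu|^n r_n \in \mathbb{R}$. The constraints become $|a_n| = r_n < \delta$ and $|\mu|^n r_n \geq \delta/2$, i.e. $r_n \in [\delta/(2|\mu|^n), \delta)$. Since $|\mu| \geq 1$ we have $|\mu|^n \geq 1$, so $\delta/(2|\mu|^n) \leq \delta/2 < \delta$, and this interval is nonempty for every $n$. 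Thus we may simply take $r_n = \delta/2$ (say) for all $n$, giving $|a_n| = \delta/2 < \delta$ and $|\mu^n a_n| = |\mu|^n \delta/2 \geq \delta/2$, and let $n_k = k$.

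**Key steps, in order.** First, introduce the polar form of $\mu$ and observe that a factor $e^{-in\alpha}$ in $a_n$ rotates $\mu^n$ onto the positive real axis. Second, record the two inequalities that $r_n := |a_n|$ must satisfy and note that $|\mu| \geq 1$ forces $|\mu|^{-n} \leq 1$, so the constraint $r_n \geq \delta |\mu|^{-n}/2$ is implied by (indeed weaker than) $r_n \geq \delta/2$. Third, pick $r_n = \delta/2$, set $a_n = (\delta/2) e^{-in\alpha}$ and $n_k = k$, and verify $\mu^{n_k} a_{n_k} = |\mu|^{n_k}\,\delta/2 \in \mathbb{R}$ with modulus $\geq \delta/2$ and $|a_{n_k}| = \delta/2 < \delta$.

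**Main obstacle.** There is essentially no obstacle: the statement is elementary once one notices that the multiplier $a_n$ is free to be any small complex number, so one uses its phase to kill the rotation of $\mu^n$ and its modulus (chosen constant, $\delta/2$) to sit safely inside the required annulus. The only point worth a sentence is why the lower bound $|\mu^{n_k} a_{n_k}| \geq \delta/2$ is automatic, namely that $|\mu|^{n} \geq 1$ for $|\mu| \geq 1$; this is where the hypothesis $|\mu| \geq 1$ enters. (The subsequence language in the statement — $n_k \to \infty$ — is not really needed for existence; taking $n_k = k$ suffices, and the flexibility is presumably kept for the way the lemma is invoked later.)
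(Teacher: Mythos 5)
Your proof is correct, and it is genuinely simpler than the one in the paper. The paper does not use the polar form of $\mu$; instead it writes $\mu^{n}=x_{n}+\imath y_{n}$, $a=\alpha+\imath\beta$, notes that $\mu^{n}a\in\mathbb{R}$ iff $\beta=-\alpha y_{n}/x_{n}$ (when $x_{n}\neq 0$), and then invokes a Diophantine-type lemma from an earlier paper (\cite[Lemma 13]{ColCS22}) to extract a subsequence $n_{k}\to\infty$ along which $\left|\operatorname{Im}(\mu^{n_{k}})/\operatorname{Re}(\mu^{n_{k}})\right|\to 0$; this keeps $\alpha_{n_{k}}=\delta/2$ fixed and makes $\beta_{n_{k}}$ small, giving $\left|a_{n_{k}}\right|<\delta$ for large $k$, with a separate trivial case for $x_{n}=0$. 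Your argument avoids all of this: writing $\mu=\left|\mu\right|e^{\imath\alpha}$ and taking $a_{n}=(\delta/2)e^{-\imath n\alpha}$ makes $\mu^{n}a_{n}=\left|\mu\right|^{n}\delta/2$ real automatically, for every $n$, with $\left|a_{n}\right|=\delta/2<\delta$ and $\left|\mu^{n}a_{n}\right|\geq\delta/2$ directly from $\left|\mu\right|\geq 1$. No subsequence extraction or external lemma is needed, and the structure of $a_n$ ($\operatorname{Re}a_{n_k}$ fixed at $\delta/2$ versus a rotating $a_n$ of constant modulus) plays no role in the one place the lemma is used (the induction in Lemma~\ref{Lemma_subbundle}), so your version drops in without further change. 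The only thing your approach gives up is the extra freedom of having $a_{n_k}$ with a prescribed real part, but that freedom is never exploited in the paper.
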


\begin{proof}
With $\mu^{n}=x_{n}+\imath y_{n}$ and $a=\alpha+\imath\beta$ we have%
\[
\mu^{n}a=(x_{n}+\imath y_{n})(\alpha+\imath\beta)=x_{n}\alpha-y_{n}%
\beta+\imath(x_{n}\beta+y_{n}\alpha).
\]
If $x_{n}=0$ choose $\alpha_{n}:=0,\beta_{n}:=\frac{\delta}{2}$ to obtain
$\mu_{n}a_{n}=-y_{n}\beta_{n}\in\mathbb{R}$ and%
\[
\left\vert \mu^{n}a_{n}\right\vert =\left\vert \mu\right\vert ^{n}\left\vert
a_{n}\right\vert \geq\left\vert a_{n}\right\vert =\frac{\delta}{2}.
\]
If $x_{n}\not =0$ the product $\mu^{n}a$ is real if and only if $\beta
=-\alpha\frac{y_{n}}{x_{n}}$. According to Colonius, Cossich, and Santana
\cite[Lemma 13]{ColCS22} there are $n_{k}\rightarrow\infty$ such that
$\left\vert \frac{\operatorname{Im}(\mu^{n_{k}})}{\operatorname{Re}(\mu
^{n_{k}})}\right\vert \rightarrow0$ and hence, with $\alpha_{n_{k}}%
:=\frac{\delta}{2},\,\beta_{n_{k}}:=-\alpha_{n_{k}}\frac{y_{n_{k}}}{x_{n_{k}}%
}$, and $k$ large enough,%
\[
\left\vert \beta_{n_{k}}\right\vert =\alpha_{n_{k}}\left\vert \frac{y_{n_{k}}%
}{x_{n_{k}}}\right\vert =\frac{\delta}{2}\left\vert \frac{\operatorname{Im}%
(\mu^{n_{k}})}{\operatorname{Re}(\mu^{n_{k}})}\right\vert <\frac{\delta}{2}.
\]
It follows for $a_{n_{k}}:=\alpha_{n_{k}}+\imath\beta_{n_{k}}$ that%
\[
\left\vert a_{n_{k}}\right\vert ^{2}=\alpha_{n_{k}}^{2}+\beta_{n_{k}}%
^{2}<\frac{1}{4}\delta^{2}+\frac{1}{4}\delta^{2}\text{, and hence }\left\vert
a_{n_{k}}\right\vert <\delta.
\]
This choice of $a_{n_{k}}$ guarantees $\mu^{n_{k}}a_{n_{k}}\in\mathbb{R}$ and
using $\left\vert \mu\right\vert \geq1$%
\[
\left\vert \mu^{n_{k}}a_{n_{k}}\right\vert =\left\vert \mu\right\vert ^{n_{k}%
}\left\vert a_{n_{k}}\right\vert \geq\left\vert a_{n_{k}}\right\vert
\geq\left\vert \alpha_{n_{k}}\right\vert =\frac{\delta}{2}.
\]

\end{proof}

The next lemma relates the reachable sets and the center-unstable subspaces of
the homogeneous part.

\begin{lemma}
\label{Lemma_subbundle}Assume that the periodic linear system in
(\ref{periodic0}) without control restrictions is controllable.\textbf{ }Then
for every $\tau\in\lbrack0,T)$ the center-unstable subspace $E_{\tau}^{+,0}$
of the homogeneous part (\ref{hom}) and the reachable sets of system
(\ref{periodic0}) with controls $u\in\mathcal{U}$ satisfy%
\[
E_{\tau}^{+,0}\subset\mathbf{R}_{\mathbb{N}T+\tau}(\tau,0)\subset
\mathrm{int}\mathbf{R}_{\mathbb{N}T+\tau}(0,0).
\]

\end{lemma}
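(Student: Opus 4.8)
\textbf{Proof proposal for Lemma \ref{Lemma_subbundle}.}

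The plan is to reduce the statement to the unstable and center parts separately, exploiting the fact that $E_\tau^{+,0}=E_\tau^{+}\oplus E_\tau^{0}$ is invariant under $X(T+\tau,\tau)$ together with the reachability-from-zero machinery of Proposition \ref{Proposition9_JDDE} and the openness of $\mathbf{R}_{dT+\tau}(\tau,0)$ from Proposition \ref{proposition_R}. The second inclusion $\mathbf{R}_{\mathbb{N}T+\tau}(\tau,0)\subset\mathrm{int}\mathbf{R}_{\mathbb{N}T+\tau}(0,0)$ is already contained in Proposition \ref{proposition_R}, so the work is entirely in showing $E_\tau^{+,0}\subset\mathbf{R}_{\mathbb{N}T+\tau}(\tau,0)$. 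Since $\mathbf{R}_{\mathbb{N}T+\tau}(\tau,0)$ is a convex set containing an $\varepsilon$-ball $\mathbf{B}(0;\varepsilon)$ around the origin (Proposition \ref{proposition_R}), and since $E_\tau^{+,0}$ is a linear subspace, it suffices to show that every $x\in E_\tau^{+,0}$ with $\|x\|$ arbitrarily large lies in $\mathbf{R}_{\mathbb{N}T+\tau}(\tau,0)$; equivalently, by convexity and $0\in\mathbf{R}_{\mathbb{N}T+\tau}(\tau,0)$, it suffices to show $E_\tau^{+,0}\subset\overline{\mathbf{R}_{\mathbb{N}T+\tau}(\tau,0)}$ and then use that the set is open (again Proposition \ref{proposition_R}) together with convexity to absorb boundary points — more precisely, if $E_\tau^{+,0}$ meets the open convex set in points of every sufficiently large norm in every direction of the subspace, the line segments from $0$ cover the whole subspace.

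The core estimate is the following. Fix $x\in E_\tau^{+,0}$ and write, via Theorem \ref{Theorem7.1.7}, $x$ in terms of the generalized eigenspace decomposition of $P:=X(T+\tau,\tau)$ restricted to $E_\tau^{+,0}$, where all eigenvalues $\mu$ satisfy $|\mu|\ge 1$. Pick any $y\in\mathbf{B}(0;\varepsilon)\subset\mathbf{R}_{dT+\tau}(\tau,0)$ close to $x$ in direction; by Proposition \ref{Proposition9_JDDE}, for each $k\in\mathbb{N}$,
\[
P^{k}y=X(kT+\tau,\tau)y\in\mathbf{R}_{(k+d)T+\tau}(\tau,0)\subset\mathbf{R}_{\mathbb{N}T+\tau}(\tau,0).
\]
The issue is that $P^{k}y$ need not converge to a multiple of $x$: on the unstable part the norm blows up but the direction may rotate (complex eigenvalues), and on the center part the direction may also rotate without the norm growing. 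This is exactly where Lemma \ref{Lemma_complex} enters: by choosing the coefficients of $y$ in each generalized eigenspace to be small complex numbers $a_{n_k}$ adapted to $\mu^{n_k}$ along a subsequence $n_k\to\infty$ on which the arguments of all the relevant $\mu^{n_k}$ simultaneously have vanishing imaginary-to-real ratio, one arranges that $P^{n_k}y$ is real, points (asymptotically) in the direction of $x$, and — on the center part, where $|\mu|=1$ — has norm bounded below by $\delta/2>0$, while on the unstable part the norm tends to infinity. Handling the nilpotent (Jordan block) contributions requires a little care: within a fixed generalized eigenspace the dominant term after applying $P^{n_k}$ is the one involving the top power of the nilpotent part, and one chooses $y$ to excite precisely that component.

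Thus along the subsequence $n_k$ we obtain points $z_{n_k}:=P^{n_k}y\in\mathbf{R}_{\mathbb{N}T+\tau}(\tau,0)$ whose directions converge to $x/\|x\|$ and whose norms are bounded below by a positive constant (in fact tend to $\infty$ if $x$ has a nonzero unstable component, and stay $\ge\delta/2$ if $x$ lies in the center part). Since $\mathbf{R}_{\mathbb{N}T+\tau}(\tau,0)$ is open and convex and contains $0$, every point on the open segment $(0,z_{n_k})$ lies in it; letting $k\to\infty$ and using that the set is open shows any point $sx$ with $0<s$ small enough relative to the limiting norm lies in $\mathbf{R}_{\mathbb{N}T+\tau}(\tau,0)$, and then rescaling (the segment from $0$ through $sx$ extended inside the open convex set, combined with the fact that we can make the limiting norm as large as we wish on the unstable part, and at least $\delta/2$ — hence by iterating Proposition \ref{Proposition9_JDDE} via $x+X(kT+\tau,\tau)y$ as in the last paragraph of the proof of Proposition \ref{proposition_R}, arbitrarily large — on the center part) gives all of $\mathbb{R}_{+}x$, hence $x\in\mathbf{R}_{\mathbb{N}T+\tau}(\tau,0)$. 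The main obstacle is the bookkeeping across several generalized eigenspaces at once: one must invoke Lemma \ref{Lemma_complex} (and the simultaneous-small-argument statement behind it, \cite[Lemma 13]{ColCS22}) for a finite family of multipliers $\mu$ simultaneously, and combine it with the Jordan-form analysis so that the ``visible'' direction of $P^{n_k}y$ is genuinely controllable toward an arbitrary target $x\in E_\tau^{+,0}$; once that is set up, convexity and openness do the rest.
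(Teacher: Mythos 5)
Your proposal correctly identifies the relevant toolkit — Lemma \ref{Lemma_complex}, Proposition \ref{Proposition9_JDDE}, and the convexity and openness from Proposition \ref{proposition_R} — and you are right that the whole work lies in $E_\tau^{+,0}\subset\mathbf{R}_{\mathbb{N}T+\tau}(\tau,0)$ and that one should push points of the reachable set forward by powers of $P:=X(T+\tau,\tau)$. But the organizing principle of your argument is different from the paper's, and it leaves a genuine gap.

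Your plan is to choose a small $y$ in the reachable ball, apply $P^{n_k}$, and argue that the resulting points have directions converging to $x/\|x\|$ while their norms stay bounded away from $0$. This directional-convergence claim does not hold when $x$ sits in a nontrivial Jordan block: for a fixed small $y$, the direction of $P^n y$ is asymptotically governed by the top power of the nilpotent part of $P$ acting on $y$, not by an arbitrary prescribed target $x$ in the generalized eigenspace; and in the center part, where $|\mu|=1$, a single iterate $P^{n_k}y$ produces a point of norm only $\ge\delta/2$, which is not enough to reach an arbitrary $x\in E_\tau^{0}$ by a single segment-from-the-origin argument. Your remarks about ``exciting the top power of the nilpotent part'' and ``bookkeeping across several generalized eigenspaces simultaneously'' are pointing at exactly these gaps without closing them; in fact the paper never needs a simultaneous version of Lemma \ref{Lemma_complex} for several multipliers at once.

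The paper's proof sidesteps both difficulties with a different structure. First, it treats each Floquet multiplier $\mu$ with $|\mu|\ge1$ separately, relying on convexity of the open set $\mathbf{R}_{\mathbb{N}T+\tau}(\tau,0)$ to combine the pieces of the direct sum afterwards. Second, it proves $J_q^{\mathbb{R}}(\mu)\subset\mathbf{R}_{\mathbb{N}T+\tau}(\tau,0)$ by induction on the Jordan order $q$: the algebraic identity $P^{n}aw=\mu^{n}aw+z(n)$ with $z(n)\in J_{q-1}(\mu)$ isolates the scalar multiple $\mu^{n}a\,w$ of the target, and the lower-order term $z(n)$ is absorbed via the inductive hypothesis and the fact that an open convex set containing a linear subspace is invariant under translation by it. Third — and this is the step your sketch is missing for the center case — the paper sums the real parts of this identity over $k=1,\dots,\ell$ with $\ell\ge 2/\delta$, adjusting signs so that $\rho:=\sum_{k}\mu^{n_k}a_{n_k}>1$, and then obtains $w_1=(1-\rho^{-1})\cdot 0+\rho^{-1}\cdot\rho w_1$ as a convex combination inside $\mathbf{R}_{\mathbb{N}T+\tau}(\tau,0)$. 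Your proposal has the raw ingredients but not the induction-on-Jordan-order decomposition nor the summation-and-rescale argument that make them rigorous.
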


\begin{proof}
The second inclusion follows from Proposition \ref{proposition_R}. It remains
to prove the first inclusion. Since by Proposition \ref{proposition_R}
$\mathbf{R}_{\mathbb{N}T+\tau}(\tau,0)$ is convex it suffices to prove that
the real generalized eigenspaces for the eigenvalues (the Floquet multipliers)
with absolute value greater than or equal to $1$ are contained in
$\mathbf{R}_{\mathbb{N}T+\tau}(\tau,0)$. For each eigenvalue $\mu$ of
$X(T+\tau,\tau)$ and $q\in\mathbb{N}$ let $J_{q}(\mu):=\ker(\mu I-X(T+\tau
,\tau)^{q})$ and denote the set of real parts by%
\[
J_{q}^{\mathbb{R}}(\mu):=\operatorname{Re}(J_{q}(\mu))=\{\operatorname{Re}%
v\left\vert v\in J_{q}(\mu)\right.  \}.
\]
Note that $J_{q}^{\mathbb{R}}(\mu)\subset J_{q+1}^{\mathbb{R}}(\mu)$. Since
$\mathbb{C}^{d}$ splits into the direct sum of the generalized eigenspaces
$\bigcup_{q\in\{0,1,\ldots,d\}}\ker(\mu I-X(T+\tau,\tau)^{q})$ and
$X(T+\tau,\tau)$ is real it follows that $\mathbb{R}^{d}$ splits into the
direct sum of the subspaces%
\[
\bigcup\nolimits_{q\in\{0,1,\ldots,d\}}J_{q}^{\mathbb{R}}(\mu)\text{ for }%
\mu\in\mathrm{spec}(X(T+\tau,\tau)).
\]
Fix an eigenvalue $\mu=x+\imath y$ of $X(T+\tau,\tau)$ with $\left\vert
\mu\right\vert \geq1$. It suffices to show that $J_{q}^{\mathbb{R}}%
(\mu)\subset\mathbf{R}_{\mathbb{N}T+\tau}(\tau,0)$ for all $q$.

We prove the statement by induction on $q$, the case $q=0$ being trivial. So
assume that $J_{q-1}^{\mathbb{R}}(\mu)\subset\mathbf{R}_{\mathbb{N}T+\tau
}(\tau,0)$ and take any $w=w_{1}+\imath w_{2}\in J_{q}(\mu)$. We must show
that $w_{1}\in\mathbf{R}_{\mathbb{N}T+\tau}(\tau,0)$. Note that $w_{1}%
,w_{2}\in J_{q}^{\mathbb{R}}(\mu)$ (cf. Sontag \cite[p. 119]{Son98}).

For $a\in\mathbb{C}$ and $n\geq1$ one computes%
\begin{align}
X(T+\tau,\tau)^{n}aw  &  =(X(T+\tau,\tau)-\mu I+\mu I)^{n}aw\nonumber\\
&  =\sum_{j=0}^{n}{\binom{n}{j}}(X(T+\tau,\tau)-\mu I)^{n-j}\mu^{j}aw=\mu
^{n}aw+z(n), \label{X1}%
\end{align}
where $z(n):=\sum_{j=0}^{n-1}{\binom{n}{j}}(X(T+\tau,\tau)-\mu I)^{n-j}\mu
^{j}aw$. Since $aw\in J_{q}(\mu)$ it follows that $(X(T+\tau,\tau)-\mu
I)^{i}aw\in J_{q-1}(\mu)$ for all $i\geq1$, hence $z(n)\in J_{q-1}(\mu
),n\geq1$. Equality (\ref{X1}) implies
\begin{equation}
\mu^{n}aw=X(T+\tau,\tau)^{n}aw-z(n). \label{3.1}%
\end{equation}
One finds with $a=\alpha+\imath\beta$%
\[
\operatorname{Re}(aw)=\operatorname{Re}((\alpha+\imath\beta)(w_{1}+\imath
w_{2}))=\alpha w_{1}-\beta w_{2},
\]
hence%
\[
\left\Vert \operatorname{Re}(aw)\right\Vert \leq2\max(\left\vert
\alpha\right\vert ,\left\vert \beta\right\vert )\max(\left\Vert w_{1}%
\right\Vert ,\left\Vert w_{2}\right\Vert )\leq2\left\vert a\right\vert
\max(\left\Vert w_{1}\right\Vert ,\left\Vert w_{2}\right\Vert ).
\]
According to Proposition \ref{proposition_R} one has $0\in\mathrm{int}%
\mathbf{R}_{dT+\tau}(\tau,0)$. Thus there is $\delta>0$ such that
$\operatorname{Re}(aw)\in\mathbf{R}_{dT+\tau}(\tau,0)$ for all $a\in
\mathbb{C}$ with $\left\vert a\right\vert <\delta$.

By Lemma \ref{Lemma_complex} there are a sequence $(n_{k})_{k\in\mathbb{N}}$
with $n_{k}\rightarrow\infty$ and $a_{n_{k}}\in\mathbb{C}$ with $\left\vert
a_{n_{k}}\right\vert <\delta$ such that $\mu^{n_{k}}a_{n_{k}}\in\mathbb{R}$
and $\left\vert \mu^{n_{k}}a_{n_{k}}\right\vert \geq\frac{\delta}{2}$. Then it
follows that $\operatorname{Re}(a_{n_{k}}w)\in\mathbf{R}_{dT+\tau}(\tau,0)$
for all $k$.

Now choose $\ell\in\mathbb{N}$ with $\ell\geq2/\delta$. Taking real parts in
(\ref{3.1}) and choosing $a=a_{n_{k}}$ one obtains%
\begin{equation}
\mu^{n_{k}}a_{n_{k}}w_{1}=X(T+\tau,\tau)^{n_{k}}\operatorname{Re}(a_{n_{k}%
}w)-\operatorname{Re}z(n_{k}), \label{3.2}%
\end{equation}
where $\operatorname{Re}z(n_{k})\in J_{q-1}^{\mathbb{R}}(\mu)$ and
$\operatorname{Re}(a_{n_{k}}w)\in\mathbf{R}_{dT+\tau}(\tau,0)$. For $k=1$ the
variation-of-parameters formula (\ref{VdP}) with $u=0$ implies%
\[
X(T+\tau,\tau)^{n_{1}}\operatorname{Re}(a_{n_{1}}w)=X(n_{1}T+\tau
,\tau)\operatorname{Re}(a_{n_{1}}w)\in\mathbf{R}_{(n_{1}+d)T+\tau}(\tau,0).
\]
We may assume that $n_{2}\geq n_{1}+d$ and obtain%
\begin{align*}
&  X(T+\tau,\tau)^{n_{1}}\operatorname{Re}(a_{n_{1}}w)+X(T+\tau,\tau)^{n_{2}%
}\operatorname{Re}(a_{n_{2}}w)\\
&  =X(T+\tau,\tau)^{n_{1}}\left[  \operatorname{Re}(a_{n_{1}}w)+X(T+\tau
,\tau)^{d}X(T+\tau,\tau)^{n_{2}-n_{1}-d}\operatorname{Re}(a_{n_{2}}w)\right]
.
\end{align*}
With $x=\operatorname{Re}(a_{n_{1}}w)\in\mathbf{R}_{dT+\tau}(\tau,0)$ and%
\[
y=X(T+\tau,\tau)^{n_{2}-n_{1}-d}\operatorname{Re}(a_{n_{2}}w)\in
\mathbf{R}_{(n_{2}-n_{1})T+\tau}(\tau,0),
\]
Proposition \ref{Proposition9_JDDE} yields%
\[
x+X(T+\tau,\tau)^{d}y\in\mathbf{R}_{(d+n_{2}-n_{1})T+\tau}(\tau,0).
\]
Hence, using again formula (\ref{VdP}) with $u=0$ and Lemma \ref{Lemma1}(ii),%
\begin{align*}
&  X(T+\tau,\tau)^{n_{1}}\operatorname{Re}(a_{n_{1}}w)+X(T+\tau,\tau)^{n_{2}%
}\operatorname{Re}(a_{n_{2}}w)\\
&  \in X(T+\tau,\tau)^{n_{1}}\mathbf{R}_{(d+n_{2}-n_{1})T+\tau}(\tau
,0)\subset\mathbf{R}_{(d+n_{2})T+\tau}(\tau,0).
\end{align*}
In the next step we obtain for $n_{3}\geq n_{2}+d$%
\begin{align*}
&  X(T+\tau,\tau)^{n_{1}}\operatorname{Re}(a_{n_{1}}w)+X(T+\tau,\tau)^{n_{2}%
}\operatorname{Re}(a_{n_{2}}w)+X(T+\tau,\tau)^{n_{3}}\operatorname{Re}%
(a_{n_{3}}w)\\
&  \subset\mathbf{R}_{(2d+n_{3})T+\tau}(\tau,0).
\end{align*}
Proceeding in this way, we arrive at%
\[
\sum_{k=1}^{\ell}X(T+\tau,\tau)^{n_{k}}\operatorname{Re}(a_{n_{k}}%
w)\in\mathbf{R}_{((\ell-1)d+n_{\ell})T+\tau}(\tau,0).
\]
Summing (\ref{3.2}) from $k=1$ to $\ell$ this yields%
\begin{align*}
\sum_{k=1}^{\ell}\mu^{n_{k}}a_{n_{k}}w_{1}  &  =\sum_{k=1}^{\ell}\left[
X(T+\tau,\tau)^{n_{k}}\operatorname{Re}(a_{n_{k}}w)-\operatorname{Re}%
z(n_{k})\right] \\
&  \in\mathbf{R}_{((\ell-1)d+n_{\ell})T+\tau}(\tau,0)+J_{q-1}^{\mathbb{R}}%
(\mu)\subset\mathbf{R}_{\mathbb{N}T+\tau}(\tau,0)+J_{q-1}^{\mathbb{R}}(\mu).
\end{align*}
By the induction hypothesis the linear subspace $J_{q-1}^{\mathbb{R}}(\mu)$ is
contained in the convex set $\mathbf{R}_{\mathbb{N}T+\tau}(\tau,0)$, which is
open by Proposition \ref{proposition_R}. This implies (cf. Sontag \cite[Lemma
3.6.4]{Son98}) that $\mathbf{R}_{\mathbb{N}T+\tau}(\tau,0)+J_{q-1}%
^{\mathbb{R}}(\mu)=\mathbf{R}_{\mathbb{N}T+\tau}(\tau,0)$. If $\mu^{n_{k}%
}a_{n_{k}}>0$ for all $k\in\{1,\dotsc,\ell\}$, then the real number
$\rho:=\sum_{k=1}^{\ell}\mu^{n_{k}}a_{n_{k}}>\ell\cdot\delta/2\geq1$. For the
$k$ with $\mu^{n_{k}}a_{n_{k}}<0$, replace $a_{n_{k}}$ by $-a_{n_{k}}$ to get
the same conclusion. It follows that $w_{1}$ is a convex combination of the
points $0$ and $\rho w_{1}$ in the convex set $\mathbf{R}_{\mathbb{N}T+\tau
}(\tau,0)$:%
\[
w_{1}=\left(  1-\rho^{-1}\right)  \cdot0+\rho^{-1}\cdot\rho w_{1}.
\]
It follows that $w_{1}\in\mathbf{R}_{\mathbb{N}T+\tau}(\tau,0)$ completing the
induction step. We have shown that $E_{\tau}^{+,0}\subset\mathbf{R}%
_{\mathbb{N}T+\tau}(\tau,0)$ proving the lemma.
\end{proof}

The following result characterizes the reachable and controllable sets of the
autonomized system (\ref{aug1}) by spectral properties of the homogeneous part
(\ref{hom}). Recall that we denote the spectral subbundles of the $T$-periodic
linear differential equation (\ref{hom}) by $\mathcal{E}^{-},\mathcal{E}%
^{+},\mathcal{E}^{+,0},$ and $\mathcal{E}^{-,0}$. For subsets $K_{\tau}%
\subset\mathbb{R}^{d}$ and matrices $Y(\tau),\tau\in\mathbb{S}^{1}=[0,T)$ we
use the following notation:%
\[
\mathcal{K}:=\bigcup_{\tau\in\lbrack0,T)}\left\{  (\tau,x)\in\mathbb{S}%
^{1}\times\mathbb{R}^{d}\left\vert x\in K_{\tau}\right.  \right\}
,\,Y(\cdot)\mathcal{K}:=\bigcup_{\tau\in\lbrack0,T)}\left\{  (\tau
,x)\left\vert x\in Y(\tau)K_{\tau}\right.  \right\}  .
\]

\begin{theorem}
\label{Theorem_sub}Suppose that the periodic system in (\ref{periodic0}) with
unconstrained controls is controllable and consider the autonomized system
(\ref{aug1}) with controls $u\in\mathcal{U}$.

(i) Then the reachable set $\mathbf{R}^{a}(0,0)\subset\mathbb{S}^{1}%
\times\mathbb{R}^{d}$ satisfies $\mathbb{S}^{1}\times\{0\}\subset
\mathrm{int}\mathbf{R}^{a}(0,0)$ and%
\[
X(dT+\cdot,\cdot)\mathcal{K}^{-}\oplus\mathcal{E}^{+,0}\subset\mathrm{int}%
\mathbf{R}^{a}(0,0)\subset\mathcal{K}^{-}\oplus\mathcal{E}^{+,0},
\]
with uniformly bounded convex sets $K_{\tau}^{-}:=\mathrm{int}\mathbf{R}%
_{\mathbb{N}T+\tau}(0,0)\cap E_{\tau}^{-},\tau\in\mathbb{S}^{1}$.

(ii)\ The controllable set $\mathbf{C}^{a}(0,0)$ satisfies $\mathbb{S}%
^{1}\times\{0\}\subset\mathrm{int}\mathbf{C}^{a}(0,0)$ and%
\[
\mathcal{E}^{-,0}\oplus X(-dT+\cdot,\cdot)\mathcal{K}^{+}\subset
\mathrm{int}\mathbf{C}^{a}(0,0)\subset\mathcal{E}^{-,0}\oplus\mathcal{K}^{+}%
\]
with uniformly bounded convex sets $K_{\tau}^{+}:=\mathrm{int}\mathbf{C}%
_{-\mathbb{N}T+\tau}(0,0)\cap E_{\tau}^{+},\tau\in\mathbb{S}^{1}$.
\end{theorem}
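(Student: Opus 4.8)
The plan is to prove part (i) in detail and then obtain part (ii) by time reversal, exactly as the preliminary lemmas were handled. The statement $\mathbb{S}^{1}\times\{0\}\subset\mathrm{int}\mathbf{R}^{a}(0,0)$ is immediate from Lemma \ref{Lemma3.4} together with the characterization of $\mathbf{R}^{a}(0,0)$ in Lemma \ref{Lemma3.3}, so the real content is the chain of inclusions for $\mathrm{int}\mathbf{R}^{a}(0,0)$.

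First I would unwind everything into fibers. By Lemma \ref{Lemma3.3}, $\mathbf{R}^{a}(0,0)=\{(\tau,x)\mid x\in\mathbf{R}_{\mathbb{N}T+\tau}(0,0)\}$, and by Proposition \ref{proposition_R} each fiber $\mathbf{R}_{\mathbb{N}T+\tau}(0,0)$ is open and convex, so $\mathrm{int}\mathbf{R}^{a}(0,0)$ is the bundle with fibers $\mathbf{R}_{\mathbb{N}T+\tau}(0,0)$ (openness in $\tau$ is not needed because we only claim a set equality with a bundle; more precisely one checks that the bundle with open convex fibers over $\mathbb{S}^1$ is itself open — this uses continuity in $\tau$ of the generating controls as in the proof of Proposition \ref{proposition_R}). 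So it suffices to show, for each $\tau\in[0,T)$,
\[
X(dT+\tau,\tau)K_{\tau}^{-}\oplus E_{\tau}^{+,0}\subset\mathbf{R}_{\mathbb{N}T+\tau}(0,0)\subset K_{\tau}^{-}\oplus E_{\tau}^{+,0},
\]
where $K_{\tau}^{-}=\mathbf{R}_{\mathbb{N}T+\tau}(0,0)\cap E_{\tau}^{-}$. The upper inclusion is the easy direction: any reachable point decomposes along $\mathbb{R}^{d}=E_{\tau}^{-}\oplus E_{\tau}^{+,0}$, and projecting the relation $x\in\mathbf{R}_{\mathbb{N}T+\tau}(0,0)$ onto $E_{\tau}^{-}$ along $E_{\tau}^{+,0}$ — using that this projection commutes with $X(kT+\tau,\tau)$ by the invariance in Theorem \ref{Theorem7.1.7}(ii) and with the variation-of-parameters integral — shows the $E_{\tau}^{-}$-component already lies in $K_{\tau}^{-}$, giving $x\in K_{\tau}^{-}\oplus E_{\tau}^{+,0}$. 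Uniform boundedness of $K_{\tau}^{-}$ comes from the fact that on $E_{\tau}^{-}$ the homogeneous flow contracts (Floquet exponents $<0$), so reachable points in $E_{\tau}^{-}$ stay in a fixed ball; continuity in $\tau$ of the relevant projections and fundamental solutions over the compact circle gives the uniformity.

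For the lower inclusion I would argue as follows. By Lemma \ref{Lemma_subbundle}, $E_{\tau}^{+,0}\subset\mathbf{R}_{\mathbb{N}T+\tau}(\tau,0)\subset\mathrm{int}\mathbf{R}_{\mathbb{N}T+\tau}(0,0)$, and trivially $X(dT+\tau,\tau)K_{\tau}^{-}$: take $z\in K_{\tau}^{-}$, i.e. $z=\varphi(kT+\tau;0,0,u)$ for some $k$ and $z\in E_{\tau}^{-}$; applying the homogeneous flow for $d$ more periods (formula (\ref{VdP}) with $u=0$) and Lemma \ref{Lemma1}(ii) puts $X(dT+\tau,\tau)z$ in $\mathbf{R}_{(k+d)T+\tau}(0,0)\subset\mathbf{R}_{\mathbb{N}T+\tau}(0,0)$. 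Then, since $\mathbf{R}_{\mathbb{N}T+\tau}(0,0)$ is convex and open (Proposition \ref{proposition_R}) and contains both the convex set $X(dT+\tau,\tau)K_{\tau}^{-}$ and the linear subspace $E_{\tau}^{+,0}$, I invoke Sontag \cite[Lemma 3.6.4]{Son98} — the trick already used at the end of the proof of Lemma \ref{Lemma_subbundle} — to conclude $X(dT+\tau,\tau)K_{\tau}^{-}\oplus E_{\tau}^{+,0}\subset\mathbf{R}_{\mathbb{N}T+\tau}(0,0)$. The one point requiring care is that the Minkowski sum of the reachable set with the \emph{subspace} $E_{\tau}^{+,0}$ is again the reachable set; this needs $E_{\tau}^{+,0}\subset\mathbf{R}_{\mathbb{N}T+\tau}(0,0)$ plus convexity plus openness, which is exactly the hypothesis of that lemma. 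I expect the main obstacle to be the bookkeeping in the upper inclusion: verifying that the spectral projection along $E^{+,0}$ genuinely commutes with the inhomogeneous solution operator (it commutes with $X(\cdot,\cdot)$ by invariance, and the integral term is a limit of Riemann sums each of which is hit by $X(kT+\tau,\cdot)$, so it goes through, but one must also check the $B(s)u(s)$ contributions are handled correctly), and then extracting uniform bounds in $\tau$ over the compact circle. Part (ii) then follows verbatim by replacing the system with its time-reversal at $\mu=0$, using Lemma \ref{Lemma2}, the remark after it on convexity of controllable sets, and the time-reversed analogue of Lemma \ref{Lemma_subbundle}: the stable subbundle of the reversed equation is $\mathcal{E}^{+}$ and its center-stable subbundle is $\mathcal{E}^{-,0}$, so the statement transposes directly.
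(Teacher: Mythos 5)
Your skeleton — reduce to fibers, get $E_{\tau}^{+,0}\subset\mathrm{int}\mathbf{R}_{\mathbb{N}T+\tau}(0,0)$ from Lemma \ref{Lemma_subbundle}, invoke Sontag's \cite[Lemma 3.6.4]{Son98}, time-reverse for (ii) — is the paper's, but your argument for the upper inclusion $\mathrm{int}\mathbf{R}^{a}(0,0)\subset\mathcal{K}^{-}\oplus\mathcal{E}^{+,0}$ rests on a mechanism that does not work. You propose to project $x\in\mathbf{R}_{\mathbb{N}T+\tau}(0,0)$ onto $E_{\tau}^{-}$ along $E_{\tau}^{+,0}$ and argue that, because the projection commutes with the inhomogeneous solution operator, the projected point is again reachable and hence in $K_{\tau}^{-}$. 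The projection $\pi_{\tau}$ commutes only with $X(T+\tau,\tau)$ and its integer powers (Theorem \ref{Theorem7.1.7}(ii)); it does \emph{not} commute with $X(kT+\tau,s)$ for generic $s\in(\tau,T+\tau)$ — the factorization $X(kT+\tau,s)=X(T+\tau,\tau)^{\,k-1}X(T+\tau,s)$ makes the obstruction explicit. So $\pi_{\tau}x$ is not of the form $\int X(\ell T+\tau,s)B(s)u(s)\,ds$, and there is no reason $\pi_{\tau}\mathbf{R}_{\mathbb{N}T+\tau}(0,0)\subset\mathbf{R}_{\mathbb{N}T+\tau}(0,0)$. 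The paper's device is a pure convexity/openness argument that avoids any commutation claim: for $x\in\mathrm{int}\mathbf{R}_{\mathbb{N}T+\tau}(0,0)$ write $x=y\oplus z$ with $y\in E_{\tau}^{-}$, $z\in E_{\tau}^{+,0}$, and observe
\[
y=x-z\in\mathrm{int}\mathbf{R}_{\mathbb{N}T+\tau}(0,0)+E_{\tau}^{+,0}=\mathrm{int}\mathbf{R}_{\mathbb{N}T+\tau}(0,0)
\]
by Sontag's lemma, hence $y\in K_{\tau}^{-}$.

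Two further slips compound this. You state $K_{\tau}^{-}=\mathbf{R}_{\mathbb{N}T+\tau}(0,0)\cap E_{\tau}^{-}$ and claim $\mathbf{R}_{\mathbb{N}T+\tau}(0,0)$ is open by Proposition \ref{proposition_R}; neither is right. The theorem defines $K_{\tau}^{-}=\mathrm{int}\mathbf{R}_{\mathbb{N}T+\tau}(0,0)\cap E_{\tau}^{-}$, and Proposition \ref{proposition_R} establishes openness only of $\mathbf{R}_{\mathbb{N}T+\tau}(\tau,0)$, not of $\mathbf{R}_{\mathbb{N}T+\tau}(0,0)$, which is merely convex. The $\mathrm{int}$ is not cosmetic: Sontag's lemma needs an \emph{open} convex set, so all the subspace-absorption steps must run inside $\mathrm{int}\mathbf{R}_{\mathbb{N}T+\tau}(0,0)$. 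Finally, the commutation of $\pi_{\tau}$ with powers of $X(T+\tau,\tau)$ that you try to deploy for the upper inclusion is exactly what the paper uses for a different step — the uniform boundedness of $K_{\tau}^{-}$ — where it does work: split $\varphi(kT+\tau;\tau,0,u)$ into a sum of period-length pieces of the form $X(T+\tau,\tau)^{\,k-j}\mathcal{B}_{\tau}(u(jT+\tau+\cdot))$, push $\pi_{\tau}$ through the powers only, and use the uniform contraction of $X(T+\tau,\tau)|_{E_{\tau}^{-}}$ to get a geometric-series bound independent of $\tau$. Your boundedness paragraph names the contraction idea but supplies no such decomposition or estimate, and that is where the actual work lies. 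Your lower-inclusion sketch and the time-reversal for (ii) are essentially right, though you should invoke the uniform ball $\mathbf{B}(0;\varepsilon)\subset\mathbf{R}_{dT+\tau}(\tau,0)$ from Proposition \ref{proposition_R} together with Proposition \ref{Proposition9_JDDE} to land in the interior taken in $\mathbb{S}^1\times\mathbb{R}^d$ rather than only fiberwise.
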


\begin{proof}
(i) Lemma \ref{Lemma3.3} and Lemma \ref{Lemma3.4} imply that $\mathbb{S}%
^{1}\times\{0\}\subset\mathrm{int}\mathbf{R}^{a}(0,0)$ and%
\begin{equation}
\mathbf{R}^{a}(0,0)=\bigcup\nolimits_{\tau\in\lbrack0,T)}\left\{  (\tau
,x)\in\mathbb{S}^{1}\times\mathbb{R}^{d}\left\vert x\in\mathbf{R}%
_{\mathbb{N}T+\tau}(0,0)\right.  \right\}  . \label{R_a3}%
\end{equation}
We claim that%
\begin{equation}
\mathrm{int}\mathbf{R}_{\mathbb{N}T+\tau}(0,0)=K_{\tau}^{-}+E_{\tau}%
^{+,0}\text{ for every }\tau\in\lbrack0,T). \label{4.6}%
\end{equation}
Recall that by Proposition \ref{proposition_R} the set $\mathrm{int}%
\mathbf{R}_{\mathbb{N}T+\tau}(0,0)$ is convex. Lemma \ref{Lemma_subbundle}
shows that $E_{\tau}^{+,0}\subset\mathbf{R}_{\mathbb{N}T+\tau}(\tau
,0)\subset\mathrm{int}\mathbf{R}_{\mathbb{N}T+\tau}(0,0)$. By Sontag
\cite[Lemma 3.6.4]{Son98} it follows that%
\[
K_{\tau}^{-}+E_{\tau}^{+,0}\subset\mathrm{int}\mathbf{R}_{\mathbb{N}T+\tau
}(0,0)+E_{\tau}^{+,0}=\mathrm{int}\mathbf{R}_{\mathbb{N}T+\tau}(0,0).
\]
For the converse inclusion, write $x\in\mathrm{int}\mathbf{R}_{\mathbb{N}%
T+\tau}(0,0)$ as $x=y\oplus z$ with $y\in E_{\tau}^{-}$ and $z\in E_{\tau
}^{+,0}$. Again by \cite[Lemma 3.6.4]{Son98} it follows that%
\[
y=x-z\in\mathrm{int}\mathbf{R}_{\mathbb{N}T+\tau}(0,0)+E_{\tau}^{+,0}%
=\mathrm{int}\mathbf{R}_{\mathbb{N}T+\tau}(0,0),
\]
which proves that $y\in K_{\tau}^{-}$ and therefore $x\in K_{\tau}^{-}%
+E_{\tau}^{+,0}$. This proves (\ref{4.6}).

By (\ref{R_a3}) it follows that $\mathrm{int}\mathbf{R}^{a}(0,0)\subset
\mathcal{K}^{-}\oplus\mathcal{E}^{+,0}$ proving the second inclusion in (i).

For the first inclusion in (i) consider $x=\varphi(kT+\tau;0,0,u)\in
\mathbf{R}_{\mathbb{N}T+\tau}(0,0)$ and recall that by Proposition
\ref{proposition_R} there is a ball $\mathbf{B}(0;\varepsilon)\subset
\mathbf{R}_{dT+\tau}(\tau,0)$ for all $\tau\in\mathbb{S}^{1}$. For
$y=\varphi(dT+\tau;\tau,0,v)\in\mathbf{B}(0;\varepsilon)$ define%
\[
w(t)=\left\{
\begin{array}
[c]{lll}%
u(t) & \text{for} & t\in\lbrack0,kT+\tau)\\
v(t-kT) & \text{for} & t\in\lbrack kT+\tau,(d+k)T+\tau]
\end{array}
\right.  .
\]
This implies%
\begin{align*}
&  \varphi((k+d)T+\tau;0,0,w)=\varphi((k+d)T+\tau;kT+\tau,x,w)\\
&  =X((k+d)T+\tau,kT+\tau)x+\int_{kT+\tau}^{(k+d)T+\tau}X((k+d)T+\tau
,s)B(s)w(s)ds\\
&  =X(dT+\tau,\tau)x+\int_{\tau}^{dT+\tau}X(dT+\tau,s)B(s)v(s)ds=X(dT+\tau
,\tau)x+y,
\end{align*}
showing that%
\[
X((dT+\tau,\tau)x+\mathbf{B}(0;\varepsilon)\subset\mathbf{R}_{\mathbb{N}%
T+\tau}(0,0),\tau\in\lbrack0,T).
\]
It follows that%
\[
\left\{  (\tau,x)\in\mathbb{S}^{1}\times\mathbb{R}^{d}\left\vert x\in
X(dT+\tau,\tau)\mathbf{R}_{\mathbb{N}T+\tau}(0,0)\right.  \right\}
\subset\mathrm{int}\mathbf{R}^{a}(0,0),\tau\in\mathbb{S}^{1}.
\]
Since $X(dT+\tau,\tau)E_{\tau}^{+,0}=E_{\tau}^{+,0}$ equality (\ref{4.6})
implies%
\[
X(dT+\tau,\tau)\mathrm{int}\mathbf{R}_{\mathbb{N}T+\tau}(0,0)=X(dT+\tau
,\tau)(K_{\tau}^{-}+E_{\tau}^{+,0})=X(dT+\tau,\tau)K_{\tau}^{-}+E_{\tau}^{+,0}%
\]
and $X(dT+\tau,\tau)K_{\tau}^{-}\subset E_{\tau}^{-}$. This shows the first
inclusion in assertion (i),%
\begin{align*}
X(dT+\tau,\tau)\mathcal{K}^{-}\oplus\mathcal{E}^{+,0}  &  =\bigcup
\nolimits_{\tau\in\lbrack0,T)}\left\{  (\tau,x)\in\mathbb{S}^{1}%
\times\mathbb{R}^{d}\left\vert x\in X(dT+\tau,\tau)K_{\tau}^{-}+E_{\tau}%
^{+,0}\right.  \right\} \\
&  \subset\mathrm{int}\mathbf{R}^{a}(0,0).
\end{align*}
In order to prove that $K_{\tau}^{-}$ is bounded, let $x=\varphi
(kT+\tau;0,0,u)\in\mathbf{R}_{\mathbb{N}T+\tau}(0,0)\cap E_{\tau}^{-}$. Then
using linearity%
\begin{align}
x  &  =\varphi(kT+\tau;0,0,u)=\varphi(kT+\tau;\tau,\varphi(\tau
,0,0,u),u)\label{B0}\\
&  =X(kT+\tau,\tau)\varphi(\tau;0,0,u)+\varphi(kT+\tau;\tau,0,u).\nonumber
\end{align}
Define for $\tau\in\mathbb{S}^{1}$ a bounded linear map by%
\[
\mathcal{B}_{\tau}:L^{\infty}([0,T],\mathbb{R}^{m})\rightarrow\mathbb{R}%
^{d},\,\mathcal{B}_{\tau}(u^{\prime})=\int_{0}^{T}X(T+\tau,\tau+s)B(\tau
+s)u^{\prime}(s)ds.
\]
Using the variation-of-parameters formula (\ref{VdP}) and periodicity one
computes%
\begin{align*}
&  \varphi(kT+\tau;\tau,0,u)=\int_{\tau}^{kT+\tau}X(kT+\tau,s)B(s)u(s)ds\\
&  =\sum_{j=0}^{k-1}\int_{jT+\tau}^{(j+1)T+\tau}X(kT+\tau,s)B(s)u(s)ds\\
&  =\sum_{j=0}^{k-1}X(T+\tau,\tau)^{k-j}\int_{0}^{T}X((j+1)T+\tau
,jT+\tau+s)B(jT+\tau+s)u(jT+\tau+s)ds\\
&  =\sum_{j=0}^{k-1}X(T+\tau,\tau)^{k-j}\int_{0}^{T}X(T+\tau,\tau
+s)B(\tau+s)u(jT+\tau+s)ds\\
&  =\sum_{j=0}^{k-1}X(T+\tau,\tau)^{k-j}\mathcal{B}_{\tau}(u(jT+\tau+\cdot)).
\end{align*}
Consider the projection $\pi_{\tau}:\mathbb{R}^{d}=E_{\tau}^{-}\oplus E_{\tau
}^{+,0}\rightarrow$ $E_{\tau}^{-}$ along $E_{\tau}^{+,0}$. By Theorem
\ref{Theorem7.1.7}(ii) the subspaces $E_{\tau}^{-}$ and $E_{\tau}^{+,0}$ are
$X(T+\tau,\tau)$-invariant, hence $\pi_{\tau}$ commutes with $X(T+\tau,\tau)$.
With (\ref{B0}) this yields%
\begin{align*}
x  &  =\pi_{\tau}x=\pi_{\tau}X(kT+\tau,\tau)\varphi(\tau;0,0,u)+\pi_{\tau
}\varphi(kT+\tau;\tau,0,u)\\
&  =X(T+\tau,\tau)^{k}\pi_{\tau}\varphi(\tau;0,0,u)+\sum_{j=0}^{k-1}%
X(T+\tau,\tau)^{k-j}\pi_{\tau}\mathcal{B}_{\tau}(u(jT+\tau+\cdot)).
\end{align*}
Since $X(T+\tau,\tau)|_{E_{\tau}^{-}}$ is a linear contraction there exist
constants $a\in(0,1)$ and $c\geq1$ such that $\Vert X(T+\tau,\tau
)^{n}x^{\prime}\Vert\leq ca^{n}\Vert x^{\prime}\Vert$ for all $n\in\mathbb{N}$
and $x^{\prime}\in E_{\tau}^{-}$. These constants may be chosen independently
of $\tau\in\mathbb{S}^{1}$. It follows that%
\[
\left\Vert X(T+\tau,\tau)^{k}\pi_{\tau}\varphi(\tau;0,0,u)\right\Vert \leq
ca^{k}\left\Vert \pi_{\tau}\varphi(\tau;0,0,u)\right\Vert
\]
and%
\[
\left\Vert {}\right.  \sum_{j=0}^{k-1}X(T+\tau,\tau)^{k-j}\pi_{\tau
}\mathcal{B}_{\tau}(\left.  u(jT+\tau+\cdot))\right\Vert \leq\sum_{j=0}%
^{k-1}ca^{k-j}\left\Vert \pi_{\tau}\mathcal{B}_{\tau}(u(jT+\tau+\cdot
))\right\Vert .
\]
Since $U$ is compact, there is $M>0$ such that $\left\Vert \pi_{\tau}%
\varphi(\tau,0,0,u^{\prime})\right\Vert ,\Vert\pi_{\tau}\mathcal{B}_{\tau
}(u^{\prime})\Vert\leq M$ for all $\tau\in\mathbb{S}^{1}$ and $u^{\prime}%
\in\mathcal{U}$. Thus $K_{\tau}^{-}$ is bounded by
\[
\left\Vert x\right\Vert =\left\Vert \varphi(kT+\tau;0,0,u)\right\Vert \leq
ca^{k}M+cM\sum_{j=0}^{k-1}a^{k-j}\leq\dfrac{2cM}{1-a}.
\]
Assertion (ii) follows by considering the time-reversed systems.
\end{proof}

Next we define subsets of complete approximate controllability.

\begin{definition}
\label{Definition_control_sets}A nonvoid set $D^{a}\subset$ $\mathbb{S}%
^{1}\times\mathbb{R}^{d}$ is a control set of the autonomized system
(\ref{aug1}) on $\mathbb{S}^{1}\times\mathbb{R}^{d}$ if it has the following
properties: (i) for all $(\tau,x)\in D^{a}$ there is a control $u\in
\mathcal{U}$ such that $\varphi^{a}(t,(\tau,x),u)\in D^{a}$ for all $t\geq0$,
(ii) for all $(\tau,x)\in D^{a}$ one has $D^{a}\subset\overline{\mathbf{R}%
^{a}(\tau,x)}$, and (iii) $D^{a}$ is maximal with these properties, that is,
if $D^{\prime}\supset D^{a}$ satisfies conditions (i) and (ii), then
$D^{\prime}=D^{a}$.
\end{definition}

The following lemma shows that there is a control set around $(0,0)$.

\begin{lemma}
\label{Lemma_point}Suppose that the periodic system in (\ref{periodic0}) with
unconstrained controls is controllable. Then $D^{a}:=\overline{\mathbf{R}%
^{a}(0,0)}\cap\mathbf{C}^{a}(0,0)$ is a control set and $\mathbb{S}^{1}%
\times\{0\}\subset\mathrm{int}D^{a}$.
\end{lemma}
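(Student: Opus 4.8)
The plan is to verify directly that $D^{a}:=\overline{\mathbf{R}^{a}(0,0)}\cap\mathbf{C}^{a}(0,0)$ satisfies the three defining properties of a control set in Definition \ref{Definition_control_sets}, and then to establish the interior statement. The starting observation is that by Lemma \ref{Lemma3.4} we have $\mathbb{S}^{1}\times\{0\}\subset\mathrm{int}\mathbf{R}^{a}(\tau,0)\cap\mathrm{int}\mathbf{C}^{a}(\tau,0)$ for every $\tau$; in particular $(0,0)$ lies in $\mathrm{int}\mathbf{R}^{a}(0,0)\cap\mathrm{int}\mathbf{C}^{a}(0,0)$, so $D^{a}$ is nonvoid and $\mathbb{S}^{1}\times\{0\}\subset\mathrm{int}D^{a}$ once we know $\mathbb{S}^{1}\times\{0\}$ lies in both sets; this needs $\mathbb{S}^{1}\times\{0\}\subset\overline{\mathbf{R}^{a}(0,0)}$, which is immediate from $\mathbb{S}^{1}\times\{0\}\subset\mathrm{int}\mathbf{R}^{a}(0,0)$ (Lemma \ref{Lemma3.4} again, with $\tau$ such that $\mathbf{R}^{a}(0,0)\supset\mathbf{R}^{a}(\tau,0)$ — or more simply, reachability of the whole circle-slice $\mathbb{S}^{1}\times\{0\}$ from $(0,0)$ follows because $0$ is an equilibrium of the homogeneous flow and the phase advances). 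So the interior claim is essentially a corollary of Lemma \ref{Lemma3.4}.

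Next I would check property (ii), the approximate reachability condition: for every $(\tau,x)\in D^{a}$ one must have $D^{a}\subset\overline{\mathbf{R}^{a}(\tau,x)}$. Since $(\tau,x)\in\mathbf{C}^{a}(0,0)$, there is a control steering $(\tau,x)$ exactly to $(0,0)$ in finite time; hence $(0,0)\in\mathbf{R}^{a}(\tau,x)$, and by concatenation $\mathbf{R}^{a}(0,0)\subset\mathbf{R}^{a}(\tau,x)$, so $\overline{\mathbf{R}^{a}(0,0)}\subset\overline{\mathbf{R}^{a}(\tau,x)}$. Since $D^{a}\subset\overline{\mathbf{R}^{a}(0,0)}$ by construction, (ii) follows. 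For property (i), the invariance condition, I would argue as follows: given $(\tau,x)\in D^{a}$, we need a single control $u$ with $\varphi^{a}(t,(\tau,x),u)\in D^{a}$ for all $t\geq 0$. Because $(\tau,x)\in\overline{\mathbf{R}^{a}(0,0)}$ and because $(0,0)\in\mathrm{int}\mathbf{C}^{a}(0,0)$ with the controllable set having nonvoid interior near $(0,0)$, I would use a standard control-set argument: first drive from $(\tau,x)$ arbitrarily close to $(0,0)$ staying inside $\overline{\mathbf{R}^{a}(0,0)}$ (possible since we can approach points of $\mathbf{R}^{a}(0,0)$ and near $(0,0)$ the reachable set is open), then exploit that from points near $(0,0)$ one can reach $(0,0)$ exactly, hence remain in $\mathbf{C}^{a}(0,0)$; patching these trajectories and using that $D^{a}$ is the intersection gives a trajectory staying in $D^{a}$ for all positive time. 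Finally maximality (iii) is the general fact that $\overline{\mathbf{R}^{a}(y_{0})}\cap\mathbf{C}^{a}(y_{0})$ for an interior point $y_{0}$ of both is a maximal such set — any $D'\supset D^{a}$ with (i),(ii) would, applied at a point of $D^{a}\cap\mathrm{int}$, force $D'\subset\overline{\mathbf{R}^{a}(0,0)}$ and, via the invariance together with $(0,0)$ being reachable from every point of $D'$ and controllable, $D'\subset\mathbf{C}^{a}(0,0)$.

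The step I expect to be the main obstacle is verifying property (i) rigorously, i.e.\ producing \emph{one} control $u$ keeping the trajectory in $D^{a}$ for all $t\ge 0$, rather than merely approximately. The clean way is to invoke the characterization of a control set as the intersection $\overline{\mathbf{R}^{a}(y_0)}\cap\mathbf{C}^{a}(y_0)$ whenever $y_{0}\in\mathrm{int}\mathbf{R}^{a}(y_0)\cap\mathrm{int}\mathbf{C}^{a}(y_0)$ — this is a known result in the control-set literature (see Colonius and Kliemann \cite{ColK14}), and the role of Lemma \ref{Lemma3.4} is precisely to supply the hypothesis $y_{0}=(0,0)\in\mathrm{int}\mathbf{R}^{a}(0,0)\cap\mathrm{int}\mathbf{C}^{a}(0,0)$. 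I would therefore structure the proof as: (a) cite Lemma \ref{Lemma3.4} to get the interior condition at $(0,0)$; (b) invoke the general control-set criterion to conclude that $D^{a}=\overline{\mathbf{R}^{a}(0,0)}\cap\mathbf{C}^{a}(0,0)$ is a control set; (c) observe $\mathbb{S}^{1}\times\{0\}\subset\mathrm{int}\mathbf{R}^{a}(0,0)\cap\mathrm{int}\mathbf{C}^{a}(0,0)\subset\mathrm{int}D^{a}$, using that $\mathbb{S}^{1}\times\{0\}\subset\overline{\mathbf{R}^{a}(0,0)}$ holds because each $(\tau,0)$ is reachable from $(0,0)$ (let the phase run and keep $u\equiv 0$, so $x\equiv 0$). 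If a self-contained argument is preferred over citing the general criterion, the obstacle is purely the bookkeeping of concatenating finitely many control pieces and passing to the closure, which is routine but slightly delicate because $\overline{\mathbf{R}^{a}(0,0)}$ is not open.
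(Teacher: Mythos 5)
Your proposal follows essentially the same route as the paper: obtain $\mathbb{S}^{1}\times\{0\}\subset\mathrm{int}\mathbf{R}^{a}(0,0)\cap\mathrm{int}\mathbf{C}^{a}(0,0)$ from Lemma \ref{Lemma3.4} (the paper actually cites Theorem \ref{Theorem_sub}, which rests on Lemma \ref{Lemma3.4}), verify the approximate-reachability property by concatenating an exact steering to $(0,0)$ (from $\mathbf{C}^{a}(0,0)$) with an approximate steering to the target (from $\overline{\mathbf{R}^{a}(0,0)}$) --- that is word-for-word the paper's argument for its equation (\ref{4.9}) --- and then invoke a general criterion for the controlled-invariance property and maximality. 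One caveat you should attend to: the paper cites Kawan \cite[Proposition 1.20]{Kawa13}, which states that a set that is maximal with respect to the approximate-reachability property and has nonvoid interior is automatically a control set, \emph{without any local accessibility hypothesis}. This matters here because, as the paper points out in the remark after Theorem \ref{Theorem_cs1}, the autonomized system (\ref{aug1}) need not be locally accessible even when the unconstrained periodic system is controllable; so the Colonius--Kliemann result you propose to cite must be checked to be one of the accessibility-free versions (and \cite{ColK14} is probably not the intended reference --- the control-set machinery is in \cite{ColK00} or \cite{Kawa13}). Also, in your maximality sketch you should make explicit the step that converts approximate reachability of $(0,0)$ from a point of $D'$ into exact membership of that point in $\mathbf{C}^{a}(0,0)$: it is precisely $(0,0)\in\mathrm{int}\mathbf{C}^{a}(0,0)$ that lets you hit a nearby controllable point and then run into $(0,0)$. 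With those two clarifications the proof coincides with the paper's.
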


\begin{proof}
Theorem \ref{Theorem_sub} shows that $\mathbb{S}^{1}\times\{0\}\subset
\mathrm{int}\mathbf{R}^{a}(0,0)\cap\mathbf{C}^{a}(0,0)$. Consider
$(\tau,x),(\sigma,y)\in\overline{\mathbf{R}^{a}(0,0)}\cap\mathbf{C}^{a}(0,0)$
and let $\varepsilon>0$. Then there are $t_{1},t_{2}\geq0$ and $u_{1},u_{2}%
\in\mathcal{U}$ with
\[
\varphi^{a}(t_{1};(\sigma,y),u_{1})=(0,0)\text{ and }d(\varphi^{a}%
(t_{2};(0,0),u_{2}),(\tau,x))<\varepsilon.
\]
Define a control $v$ by%
\[
v(t):=\left\{
\begin{array}
[c]{lll}%
u_{1}(t) & \text{for} & t\in\lbrack0,t_{1}]\\
u_{2}(t-t_{1}) & \text{for} & t\in(t_{1},t_{1}+t_{2}]
\end{array}
\right.  .
\]
Then it follows that $d(\varphi^{a}(t_{1}+t_{2},(\sigma,y),v),(\tau
,x))<\varepsilon$. This shows that%
\begin{equation}
\overline{\mathbf{R}^{a}(0,0)}\cap\mathbf{C}^{a}(0,0)\subset\overline
{\mathbf{R}^{a}(\tau,x)}\text{ for all }(\tau,x)\in\overline{\mathbf{R}%
^{a}(0,0)}\cap\mathbf{C}^{a}(0,0). \label{4.9}%
\end{equation}
Define $D^{a}$ as the union of all sets $D^{\prime}$ with $D^{\prime}%
\subset\overline{\mathbf{R}^{a}(\tau,x)}$ for all $(\tau,x)\in D^{\prime}$ and
$\overline{\mathbf{R}^{a}(0,0)}\cap\mathbf{C}^{a}(0,0)\subset D^{\prime}$.
Then any $(\tau,x)\in D^{a}$ is in some set $D^{\prime}$ and $(0,0)\in
\overline{\mathbf{R}^{a}(0,0)}\cap\mathrm{int}\mathbf{C}^{a}(0,0)$ implies
that there are $t>0$ and $u\in\mathcal{U}$ with $\varphi^{a}(t;(\tau
,x),u)\in\mathbf{C}^{a}(0,0)$ and $(\tau,x)\in\mathbf{C}^{a}(0,0)$ follows.
This shows that $D^{a}\subset\mathbf{C}^{a}(0,0)$. Since $D^{a}\subset
\overline{\mathbf{R}^{a}(0,0)}$ this also implies $D^{a}\subset\overline
{\mathbf{R}^{a}(\tau,x)}$ proving $D^{a}=\overline{\mathbf{R}^{a}(0,0)}%
\cap\mathbf{C}^{a}(0,0)$. It also follows that $D^{a}$ is a maximal set with
$D^{a}\subset\overline{\mathbf{R}^{a}(\tau,x)}$ and $\mathrm{int}D^{a}%
\not =\varnothing$. Hence Kawan \cite[Proposition 1.20]{Kawa13} yields that
$D^{a}$ is a control set.
\end{proof}

The following theorem characterizes the unique control set with nonvoid
interior of the autonomized system. Recall that the center subbundle
$\mathcal{E}^{0}$ of the periodic linear differential equation (\ref{hom}) is
non-trivial if and only if $0$ is a Floquet exponent if and only if there is a
Floquet multiplier of modulus $1$, i.e., if $\mathrm{spec}(X(T,0))\cap
\mathbb{S}^{1}\not =\varnothing$.

\begin{theorem}
\label{Theorem_cs1}Suppose that the periodic system in (\ref{periodic0}) with
unconstrained controls is controllable. Then there exists a unique control set
$D^{a}$ with nonvoid interior of the autonomized system (\ref{aug1}) with
controls $u\in\mathcal{U}$.

It is given by $D^{a}=\overline{\mathbf{R}^{a}(0,0)}\cap\mathbf{C}^{a}(0,0)$
and satisfies $\mathbb{S}^{1}\times\{0\}\subset\mathrm{int}D^{a}$ and, with
$\mathcal{K}^{-}\subset\mathcal{E}^{-}$ and $\mathcal{K}^{+}\subset
\mathcal{E}^{+}$ defined in Theorem \ref{Theorem_sub} and $Y(\tau
):=X(dT+\tau,\tau),\tau\in\mathbb{S}^{1}$,%
\begin{equation}
Y(\cdot)\mathcal{K}^{-}\oplus\mathcal{E}^{0}\oplus Y(\cdot)^{-1}%
\mathcal{K}^{+}\subset\mathrm{int}D^{a}\subset D^{a}\subset Y(\cdot
)^{-1}\overline{\mathcal{K}^{-}}\oplus\mathcal{E}^{0}\oplus Y(\cdot
)\mathcal{K}^{+}. \label{D_a2}%
\end{equation}
In particular, $\mathrm{int}D^{a}$ is unbounded if and only if the center
subbundle $\mathcal{E}^{0}$ is nontrivial.
\end{theorem}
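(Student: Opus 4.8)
The plan is to build on Lemma~\ref{Lemma_point}, which already identifies the candidate $D^a=\overline{\mathbf{R}^a(0,0)}\cap\mathbf{C}^a(0,0)$, shows that it is a control set, and shows that $\mathbb{S}^1\times\{0\}\subset\mathrm{int}D^a$. Thus the formula for $D^a$ and the inclusion of the zero section are already in hand, and what remains is: (a) the two-sided inclusion~(\ref{D_a2}); (b) the unboundedness dichotomy; (c) the uniqueness of $D^a$ among control sets with nonvoid interior.

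Part (a) is set manipulation on top of Theorem~\ref{Theorem_sub}. For the left inclusion, note that $Y(\cdot)\mathcal{K}^-\oplus\mathcal{E}^{+,0}\subset\mathrm{int}\mathbf{R}^a(0,0)$ and $\mathcal{E}^{-,0}\oplus Y(\cdot)^{-1}\mathcal{K}^+\subset\mathrm{int}\mathbf{C}^a(0,0)$ both contain $Y(\cdot)\mathcal{K}^-\oplus\mathcal{E}^0\oplus Y(\cdot)^{-1}\mathcal{K}^+$, using $Y(\tau)K^-_\tau\subset E^-_\tau$ and $Y(\tau)^{-1}K^+_\tau\subset E^+_\tau$; hence this set lies in the open set $\mathrm{int}\mathbf{R}^a(0,0)\cap\mathrm{int}\mathbf{C}^a(0,0)$, which is contained in $D^a$ and therefore in $\mathrm{int}D^a$. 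For the right inclusion, $D^a\subset\overline{\mathbf{R}^a(0,0)}\cap\mathbf{C}^a(0,0)\subset(\overline{\mathcal{K}^-}\oplus\mathcal{E}^{+,0})\cap(\mathcal{E}^{-,0}\oplus\overline{\mathcal{K}^+})$, and splitting each fibre along $E^-_\tau\oplus E^0_\tau\oplus E^+_\tau$ identifies this intersection with $\overline{\mathcal{K}^-}\oplus\mathcal{E}^0\oplus\overline{\mathcal{K}^+}$, which in turn is contained in $Y(\cdot)^{-1}\overline{\mathcal{K}^-}\oplus\mathcal{E}^0\oplus Y(\cdot)\mathcal{K}^+$ because $Y(\tau)^{\mp1}$ maps $K^\pm_\tau$ into itself (a short consequence of $Y(\tau)\mathbf{R}_{\mathbb{N}T+\tau}(0,0)\subset\mathbf{R}_{\mathbb{N}T+\tau}(0,0)$, obtained by appending an uncontrolled arc of length $dT$). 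Part (b) then follows immediately: by Theorem~\ref{Theorem_sub} the fibres $K^\pm_\tau$ and the matrices $Y(\tau)^{\pm1}$ are uniformly bounded over $\tau\in\mathbb{S}^1$, so the right-hand side of~(\ref{D_a2}) is bounded when $\mathcal{E}^0$ is trivial; conversely $0\in K^\pm_\tau$ gives $\mathcal{E}^0\subset Y(\cdot)\mathcal{K}^-\oplus\mathcal{E}^0\oplus Y(\cdot)^{-1}\mathcal{K}^+\subset\mathrm{int}D^a$, so a nontrivial $\mathcal{E}^0$ makes $\mathrm{int}D^a$ unbounded.

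The substantive part is (c). First I would record the elementary fact that two control sets with a common point coincide: if $(\tau,x)\in D_1\cap D_2$, then $D_1\cup D_2$ again satisfies the defining conditions of a control set, so maximality forces $D_1=D_1\cup D_2=D_2$. Hence it suffices to show that every control set $D'$ with $\mathrm{int}D'\neq\varnothing$ meets $D^a$. The key point is that such a $D'$ is trapped near the center subbundle. Indeed, fix $(\tau^*,x^*)\in\mathrm{int}D'$ and a control $u^*$ with $\varphi^a(t;(\tau^*,x^*),u^*)\in D'$ for all $t\ge0$; sampling at multiples of $T$ and expanding exactly as in the proof of Theorem~\ref{Theorem_sub}, the $\mathcal{E}^+$-component of the sampled trajectory equals $X(T+\tau^*,\tau^*)^{n}$ applied to $\pi^+x^*$ plus a geometric sum of control contributions whose norm is bounded by a constant depending only on the system. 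Since $X(T+\tau^*,\tau^*)|_{E^+_{\tau^*}}$ is expanding, this component either stays uniformly bounded (which forces $\|\pi^+x^*\|$ to be bounded by that same constant) or diverges geometrically; in the divergent case the same expanding estimate shows that no control can steer the far-out sampled points back toward $(\tau^*,x^*)$, contradicting $D'\subset\overline{\mathbf{R}^a(z)}$ for those points. Thus the $\mathcal{E}^+$-projection of $D'$ is uniformly bounded, and by the time-reversed argument (the controllable sets of~(\ref{aug1}) are the reachable sets of~(\ref{aug2}), which interchanges $\mathcal{E}^+$ and $\mathcal{E}^-$) so is its $\mathcal{E}^-$-projection.

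It then remains to steer an interior point of $D'$ approximately onto $\mathbb{S}^1\times\{0\}$ and back into $\mathrm{int}D'$, which would exhibit a common point of $D'$ and $D^a$; for this one combines the above uniform hyperbolic bounds with Lemma~\ref{Lemma3.4}, which places $\mathbb{S}^1\times\{0\}$ in the interior of both $\mathbf{R}^a(\tau,0)$ and $\mathbf{C}^a(\tau,0)$, and with the (small-time) local controllability of~(\ref{aug1}) guaranteed by the rank condition in Theorem~\ref{Theorem_Brunovsky}. I expect precisely this last step to be the main obstacle: converting the qualitative statement ``$D'$ has bounded hyperbolic projection'' into an effective steering of $D'$ into $D^a$ requires the quantitative content of Theorem~\ref{Theorem_sub} and of Brunovsky's criterion, rather than a soft compactness or maximality argument, and one has to be careful that the bound obtained is compatible with the reachability-to-zero region described by the sets $\mathcal{K}^\pm$.
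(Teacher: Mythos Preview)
Your treatment of parts (a) and (b) is essentially the paper's own argument; the only minor sloppiness is in the right-hand inclusion of (\ref{D_a2}), where you write $\overline{\mathbf{R}^a(0,0)}\subset\overline{\mathcal{K}^-}\oplus\mathcal{E}^{+,0}$ and $\mathbf{C}^a(0,0)\subset\mathcal{E}^{-,0}\oplus\overline{\mathcal{K}^+}$, but Theorem~\ref{Theorem_sub} only bounds the \emph{interiors} of $\mathbf{R}^a(0,0)$ and $\mathbf{C}^a(0,0)$. The paper fixes this by first applying $Y(\cdot)^{\pm1}$ to push $D^a$ into $\overline{\mathrm{int}\mathbf{R}^a(0,0)}$ and $\mathrm{int}\mathbf{C}^a(0,0)$ (using exactly the ``append an uncontrolled arc of length $dT$, then a ball'' trick you mention), and only then invoking Theorem~\ref{Theorem_sub}; this order of operations also explains why the factors $Y(\cdot)^{\pm1}$ appear on the right of (\ref{D_a2}) the way they do. Your version can be repaired along the same lines, so this is cosmetic.

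The substantive divergence is in uniqueness. Your plan is a direct hyperbolic-dynamics argument: bound the $\mathcal{E}^\pm$-projections of an arbitrary control set $E$ via the expansion/contraction of $X(T+\tau,\tau)$, and then try to steer from $E$ into $D^a$. As you yourself note, the final step is genuinely problematic: the a priori bound you obtain on the $\mathcal{E}^+$-component of $E$ is only a system constant, and there is no reason it should fall inside $K_\tau^+$, so you cannot conclude that any point of $E$ lies in $\mathbf{C}^a(0,0)$. The paper bypasses this entirely with a short \emph{scaling} argument exploiting linearity. Since $U$ is convex with $0\in U$, $\varphi^a(t;(0,x),u)=(\tau,y)$ implies $\varphi^a(t;(0,\alpha x),\alpha u)=(\tau,\alpha y)$ for every $\alpha\in(0,1]$; hence each scaled copy $\{(0,\alpha x):(0,x)\in E\}$ lies in some control set $D_\alpha$ with nonvoid interior. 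Picking $(0,x)\in\mathrm{int}E$ and letting $\alpha_0$ be the infimum of those $\alpha$ for which the segment $(0,\beta x)$, $\beta\in[\alpha,1]$, stays in $E$, one gets a contradiction if $\alpha_0>0$ (since $(0,\alpha_0x)\in\partial E\cap\mathrm{int}D_{\alpha_0}$ forces $E=D_{\alpha_0}$). Thus $\alpha_0=0$, so for small $\alpha$ the point $(0,\alpha x)$ lies in $E\cap D^a$, giving $E=D^a$. This avoids all quantitative hyperbolic estimates and the compatibility issue you flagged; I would replace your approach to (c) with this argument.
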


\begin{proof}
The inclusion $\mathbb{S}^{1}\times\{0\}\subset\mathrm{int}D^{a}$ and
$D^{a}=\overline{\mathbf{R}^{a}(0,0)}\cap\mathbf{C}^{a}(0,0)$ follow by Lemma
\ref{Lemma_point}. Furthermore, the inclusions (\ref{D_a2}) imply the last
assertion since $X(dT+\tau,\tau),\tau\in\mathbb{S}^{1}$, as well as
$\mathcal{K}^{-}$ and $\mathcal{K}^{+}$ are bounded. Theorem \ref{Theorem_sub}
implies%
\[
X(dT+\cdot,\cdot)\mathcal{K}^{-}\oplus\mathcal{E}^{+}\oplus\mathcal{E}%
^{0}\subset\mathrm{int}\mathbf{R}^{a}(0,0),\,\mathcal{E}^{-}\oplus
\mathcal{E}^{0}\oplus X(-dT+\cdot,\cdot)\mathcal{K}^{+}\subset\mathrm{int}%
\mathbf{C}^{a}(0,0).
\]
Since $\mathrm{int}D^{a}\subset\mathrm{int}\mathbf{R}^{a}(0,0)\cap
\mathrm{int}\mathbf{C}^{a}(0,0)$ and $X(-dT+\tau,\tau)=X(\tau,-dT+\tau
)^{-1}=X(dT+\tau,\tau)^{-1}$ for $\tau\in\mathbb{S}^{1}$ the first inclusion
in (\ref{D_a2}) follows. In order to prove the third inclusion let
$(\tau,y)\in\mathbf{R}^{a}(0,0)$ be given by%
\[
(\tau,y)=\varphi^{a}(t;(0,0),u)=(t\operatorname{mod}T,\varphi(t;0,0,u)=(\tau
,\varphi(\ell T+\tau;0,u))
\]
with $y=\varphi(\ell T+\tau;0,u)\in\mathbf{R}_{\mathbb{N}T+\tau}(0,0)$. By
Proposition \ref{proposition_R} there is a ball $\mathbf{B}(0;\varepsilon
)\subset\mathbf{R}_{dT+\tau}(\tau,0)$. Hence Proposition
\ref{Proposition9_JDDE} implies%
\[
\mathbf{B}(0;\varepsilon)+X(dT+\tau,\tau)y\subset\mathbf{R}_{(d+\ell)T+\tau
}(\tau,0)\subset\mathbf{R}_{\mathbb{N}T+\tau}(\tau,0).
\]
Since $\varepsilon>0$ is independent of $\tau\in\mathbb{S}^{1}$ it follows
that $X(dT+\cdot,\cdot)\mathbf{R}^{a}(0,0)\subset\mathrm{int}\mathbf{R}%
^{a}(0,0)$, and hence%
\[
X(dT+\cdot,\cdot)\overline{\mathbf{R}^{a}(0,0)}\subset\overline{\mathrm{int}%
\mathbf{R}^{a}(0,0)}.
\]
Analogously it follows that%
\[
X(-dT+\cdot,\cdot)\mathbf{C}^{a}(0,0)\subset\mathrm{int}\mathbf{C}^{a}(0,0).
\]
By Theorem \ref{Theorem_sub} we obtain for $x\in D^{a}=\overline
{\mathbf{R}^{a}(0,0)}\cap\mathbf{C}^{a}(0,0)$,%
\begin{align*}
X(dT+\cdot,\cdot)x  &  \in\overline{\mathrm{int}\mathbf{R}^{a}(0,0)}%
\subset\overline{\mathcal{K}^{-}\oplus\mathcal{E}^{+,0}}=\overline
{\mathcal{K}^{-}}\oplus\mathcal{E}^{0}\oplus\mathcal{E}^{+},\\
X(dT+\cdot,\cdot)^{-1}x  &  =X(-dT+\cdot,\cdot)x\in\mathrm{int}\mathbf{C}%
^{a}(0,0)\subset\mathcal{E}^{-}\oplus\mathcal{E}^{0}\oplus\mathcal{K}^{+}.
\end{align*}
This implies%
\[
x\in X(dT+\cdot,\cdot)^{-1}\left(  \overline{\mathcal{K}^{-}}\oplus
\mathcal{E}^{0}\oplus\mathcal{E}^{+}\right)  \cap X(dT+\cdot,\cdot)\left(
\mathcal{E}^{-}\oplus\mathcal{E}^{0}\oplus\mathcal{K}^{+}\right)  .
\]
By Theorem \ref{Theorem7.1.7} the subbundles $\mathcal{E}^{0}$ and
$\mathcal{E}^{\pm}$ are invariant under $X(dT+\cdot,\cdot)$ and hence%
\[
D^{a}\subset X(dT+\cdot,\cdot)^{-1}\overline{\mathcal{K}^{-}}\oplus
\mathcal{E}^{0}\oplus X(dT+\cdot,\cdot)\mathcal{K}^{+}=Y(\cdot)^{-1}%
\overline{\mathcal{K}^{-}}\oplus\mathcal{E}^{0}\oplus Y(\cdot)\mathcal{K}%
^{+}.
\]
This proves the third inclusion in (\ref{D_a2}).

It remains to show uniqueness. Let $E\subset\mathbb{S}^{1}\times\mathbb{R}%
^{d}$ be an arbitrary control set with nonvoid interior. For all $(\tau,x)\in
E$ there is a control function $u\in\mathcal{U}$ such that $\varphi
^{a}(t;(\tau,x),u)\in E$ for all $t\geq0$, hence $E\cap\left(  \{0\}\times
\mathbb{R}^{d}\right)  \not =\varnothing$.

By linearity, it follows from $\varphi^{a}(t;(0,x_{1}),u)=(\tau,x_{2})$ for
$x_{1},x_{2}\in\mathbb{R}^{d}$ and $t>0$ that $\varphi^{a}(t;\alpha
x_{1},\alpha u)=(\tau,\alpha x_{2})$ for any $\alpha\in(0,1]$. This implies
that $\{(0,\alpha x)\left\vert (0,x)\in E\right.  \}$ is contained in some
control set $D_{\alpha}$ and $\mathrm{int}\{(0,\alpha x)\left\vert (0,x)\in
E\right.  \}\subset\mathrm{int}D_{\alpha}$. Now choose any $(0,x)\in
\mathrm{int}E$ and suppose, by way of contradiction, that
\[
\alpha_{0}:=\inf\{\alpha\in(0,1]\left\vert \forall\beta\in\lbrack
\alpha,1]:(0,\beta x)\in E\right.  \}>0.
\]
Then $(0,\alpha_{0}x)\in\partial E$ and $(0,\alpha_{0}x)\in\mathrm{int}%
D_{\alpha_{0}}$. Therefore $E\cap\mathrm{int}D_{\alpha_{0}}\not =\varnothing$,
and it follows that $E=D_{\alpha_{0}}$ and $(0,\alpha_{0}x)\in\mathrm{int}E$.
This is a contradiction and so $\alpha_{0}=0$. Choosing $\alpha>0$ small
enough such that $(0,\alpha x)\in D^{a}$, we obtain $(0,\alpha x)\in E\cap
D^{a}$ and it follows that $E=D^{a}$.
\end{proof}

\begin{remark}
A control system is called locally accessible, if the reachable and
controllable sets up to time $t>0$ have nonvoid interior for every $t>0$. If
this holds for the autonomized system (\ref{aug1}), then Colonius and Kliemann
\cite[Lemma 3.2.13(i)]{ColK00} implies that $\overline{D^{a}}=\overline
{\mathrm{int}D^{a}}$. Even if the system in (\ref{periodic0}) without control
restrictions is controllable, the autonomized system (\ref{aug1}) need not
satisfy $\mathrm{int}\mathbf{R}_{t}^{a}(\tau,x)\not =\varnothing$ for small
$t>0$, hence, in general, it is not locally accessible. The example in
Bittanti, Guarbadassi, Mafezzoni, and Silverman \cite[p. 38]{Bit78} is a counterexample.
\end{remark}

\begin{remark}
\label{RemarkGayer}Gayer \cite[Theorem 3]{Gayer05} relates the control sets of
autonomized (general nonlinear) control systems to control sets of
discrete-time systems depending on $\tau\in\mathbb{S}^{1}$defined by
Poincar\'{e} maps. For system (\ref{aug1}) these systems are defined by%
\[
\Phi_{\tau}^{u}:\mathbb{R}^{d}\rightarrow\mathbb{R}^{d},\,\Phi_{\tau}%
^{u}(\cdot)=\varphi^{a}(T,(\tau,\cdot),u)=\varphi(T+\tau;\tau,\cdot,u)\quad
u\in\mathcal{U}.
\]

\end{remark}

\section{The Poincar\'{e} sphere\label{Section5}}

This section describes the global controllability behavior of periodic linear
control systems of the form (\ref{periodic0}) with homogeneous part
(\ref{hom}) by projection to the Poincar\'{e} sphere. This allows us to
determine the behavior \textquotedblleft near infinity\textquotedblright\ by
the induced system near the equator.

The system on the Poincar\'{e} sphere is obtained by attaching the state space
$\mathbb{R}^{d}$ to the north pole $(0,1)\in\mathbb{R}^{d}\times\mathbb{R}$ of
the unit sphere $\mathbb{S}^{d}$ in $\mathbb{R}^{d+1}$ and then taking the
stereographic projection to $\mathbb{S}^{d}$. More formally, the extended
system with scalar part $\dot{z}=0$ is defined as%
\begin{equation}
\left(
\begin{array}
[c]{c}%
\dot{x}(t)\\
\dot{z}(t)
\end{array}
\right)  =\left(
\begin{array}
[c]{cc}%
A(t) & 0\\
0 & 0
\end{array}
\right)  \left(
\begin{array}
[c]{c}%
x(t)\\
z(t)
\end{array}
\right)  +\sum_{i=1}^{m}u_{i}(t)\left(
\begin{array}
[c]{cc}%
0 & b_{i}(t)\\
0 & 0
\end{array}
\right)  \left(
\begin{array}
[c]{c}%
x(t)\\
z(t)
\end{array}
\right)  , \label{ext}%
\end{equation}
where $b_{i}(t)$ denote the columns of $B(t)$. For $z\equiv1$ we get a copy of
the original system (\ref{periodic0}). Abbreviate%
\[
\hat{A}(t)=\left(
\begin{array}
[c]{cc}%
A(t) & 0\\
0 & 0
\end{array}
\right)  ,~\hat{B}_{i}(t)=\left(
\begin{array}
[c]{cc}%
0 & b_{i}(t)\\
0 & 0
\end{array}
\right)  ,~\sum_{i=1}^{m}u_{i}(t)\hat{B}_{i}(t)=\left(
\begin{array}
[c]{cc}%
0 & B(t)u(t)\\
0 & 0
\end{array}
\right)  .
\]
The projection of the homogeneous control system (\ref{ext}) on $\mathbb{S}%
^{d}\subset\mathbb{R}^{d+1}$ has the form (omitting the argument $t$)%
\begin{align*}
\left(
\begin{array}
[c]{c}%
\dot{s}\\
\dot{s}_{d+1}%
\end{array}
\right)   &  =[\hat{A}-(s^{\top},s_{d+1})\hat{A}(s^{\top},s_{d+1})^{\top}\cdot
I_{d+1}](s^{\top},s_{d+1})^{\top}\\
&  \qquad+\sum_{i=1}^{m}u_{i}[\hat{B}_{i}-(s^{\top},s_{d+1})\hat{B}%
_{i}(s^{\top},s_{d+1})^{\top}\cdot I_{d+1}](s^{\top},s_{d+1})^{\top}.
\end{align*}
This is obtained by subtracting the radial components of the linear vector
fields $\hat{A}(t)$ and $\hat{B}_{i}(t)$.

We compute%
\begin{align}
&  \left(
\begin{array}
[c]{c}%
\dot{s}\\
\dot{s}_{d+1}%
\end{array}
\right)  =\left[  \left(
\begin{array}
[c]{cc}%
A & 0\\
0 & 0
\end{array}
\right)  -(s^{\top},s_{d+1})\left(
\begin{array}
[c]{cc}%
A & 0\\
0 & 0
\end{array}
\right)  \left(
\begin{array}
[c]{c}%
s\\
s_{d+1}%
\end{array}
\right)  \cdot I_{d+1}\right]  \left(
\begin{array}
[c]{c}%
s\\
s_{d+1}%
\end{array}
\right) \nonumber\\
&  \qquad+\sum_{i=1}^{m}u_{i}\left[  \left(
\begin{array}
[c]{cc}%
0 & b_{i}\\
0 & 0
\end{array}
\right)  -(s^{\top},s_{d+1})\left(
\begin{array}
[c]{cc}%
0 & b_{i}\\
0 & 0
\end{array}
\right)  \left(
\begin{array}
[c]{c}%
s\\
s_{d+1}%
\end{array}
\right)  \cdot I_{d+1}\right]  \left(
\begin{array}
[c]{c}%
s\\
s_{d+1}%
\end{array}
\right) \nonumber\\
\medskip &  =\left(
\begin{array}
[c]{cc}%
A-s^{\top}As\cdot I_{d} & 0\\
0 & -s^{\top}As
\end{array}
\right)  \left(
\begin{array}
[c]{c}%
s\\
s_{d+1}%
\end{array}
\right) \nonumber\\
&  \qquad+\sum_{i=1}^{m}u_{i}\left(
\begin{array}
[c]{cc}%
-s^{\top}b_{i}s_{d+1}\cdot I_{d} & b_{i}\\
0 & -s^{\top}b_{i}s_{d+1}%
\end{array}
\right)  \left(
\begin{array}
[c]{c}%
s\\
s_{d+1}%
\end{array}
\right) \nonumber\\
&  =\left(
\begin{array}
[c]{c}%
\left[  A-s^{\top}As\cdot I_{d}\right]  s\\
-s^{\top}As~s_{d+1}%
\end{array}
\right)  +\sum_{i=1}^{m}u_{i}\left(
\begin{array}
[c]{c}%
-s^{\top}b_{i}s_{d+1}s+b_{i}s_{d+1}\\
-s^{\top}b_{i}s_{d+1}^{2}%
\end{array}
\right)  . \label{5.3}%
\end{align}
This is the system equation for the induced control system on the Poincar\'{e}
sphere. By adding the phase $\tau\in\mathbb{S}^{1}$ this induces an autonomous
control system on $\mathbb{S}^{1}\times\mathbb{S}^{d}$.

\begin{remark}
\label{Remark_projective}The homogeneous control system (\ref{5.3}) also
induces a control system on projective space $\mathbb{P}^{d}$ and a
corresponding autonomized control system on $\mathbb{S}^{1}\times
\mathbb{P}^{d}$. Parallel to the following developments on the unit sphere
$\mathbb{S}^{d}$ one may also work with $\mathbb{P}^{d}$. Here we prefer to
work on the sphere since this allows us to write down everything explicitly.
\end{remark}

On the \textquotedblleft equator\textquotedblright\ of the sphere
$\mathbb{S}^{d}$ given by%
\[
\mathbb{S}^{d,0}:=\{s=(s_{1},\ldots,s_{d},s_{d+1})\in\mathbb{S}^{d}\left\vert
s_{d+1}=0\right.  \},
\]
the first $d$ components of (\ref{5.3}) reduce to the (uncontrolled)
differential equation%
\begin{equation}
\dot{s}(t)=(A(t)-s(t)^{\top}A(t)s(t)\cdot I_{d})s(t), \label{hom_S}%
\end{equation}
which leaves $\mathbb{S}^{d-1}\subset\mathbb{R}^{d}$ invariant. This coincides
with the periodic differential equation obtained by projecting the homogeneous
part (\ref{hom}) to $\mathbb{S}^{d-1}$. Furthermore, the equator is invariant,
hence also the upper hemisphere $\mathbb{S}^{d,+}:=\{s=(s_{1},\ldots
,s_{d},s_{d+1})\in\mathbb{S}^{d}\left\vert s_{d+1}>0\right.  \}$ is invariant.
When the phases $\tau\in\lbrack0,T)$ are added to the states, the periodic
differential equations (\ref{hom}) and (\ref{hom_S}) induce autonomous
differential equations on $\mathbb{S}^{1}\times\mathbb{R}^{d}$ and
$\mathbb{S}^{1}\times\mathbb{S}^{d-1}$, resp.

A conjugacy of (autonomous) control systems%
\begin{align*}
\dot{x}(t)  &  =f(x(t),u(t))\text{ on }M\text{ with }u(t)\in U,\\
\dot{y}(t)  &  =g(y(t),u(t))\text{ on }N\text{ with }u(t)\in U,
\end{align*}
on manifolds $M$ and $N$ can be defined as a map $h:M\rightarrow N$ which
together with its inverse $h^{-1}$ is $C^{\infty}$ such that the trajectories
$\varphi(t;x_{0},u),t\in\mathbb{R}$, on $M$ and $\psi(t;y_{0},u),t\in
\mathbb{R}$, on $N$ with initial conditions $\varphi(0;x_{0},u)=x_{0}$ and
$\psi(0;y_{0},u)=y_{0}$ (assumed to exist) satisfy%
\[
h(\varphi(t;x_{0},u))=\psi(t;h(x_{0}),u)\text{ for all }t\in\mathbb{R}\text{
and }x\in M,u\in\mathcal{U}.
\]
Analogously, one can define conjugacies of differential equations. It is clear
that reachable sets, controllable sets, and control sets are preserved under conjugacies.

In the following, we slightly abuse notation by identifying vectors and their
transposes when it is clear from the context what is meant.

\begin{proposition}
\label{Proposition_e}(i) The map%
\[
e_{P}:\mathbb{S}^{1}\times\mathbb{R}^{d}\rightarrow\mathbb{S}^{1}%
\times\mathbb{S}^{d,+},\,(\tau,x)\mapsto\left(  \tau,\frac{(x,1)}{\left\Vert
(x,1)\right\Vert }\right)  =\left(  \tau,\frac{(x,1)}{\sqrt{1+\left\Vert
x\right\Vert ^{2}}}\right)
\]
is a conjugacy of the autonomized control system (\ref{aug1}) on
$\mathbb{S}^{1}\times\mathbb{R}^{d}$ and the restriction to $\mathbb{S}%
^{1}\times\mathbb{S}^{d,+}$ of the autonomized system induced by (\ref{5.3}).

(ii) The map $e_{\mathbb{S}}:\mathbb{S}^{1}\times\mathbb{S}^{d-1}%
\rightarrow\mathbb{S}^{1}\times\mathbb{S}^{d,0},\,(\tau,s)\mapsto(\tau
,s,0)\in\mathbb{S}^{1}\times\mathbb{S}^{d,0}$ is a conjugacy of the
autonomized differential equation induced by (\ref{hom_S}) on $\mathbb{S}%
^{1}\times\mathbb{S}^{d-1}$ and the restriction to $\mathbb{S}^{1}%
\times\mathbb{S}^{d,0}$ of the autonomized control system corresponding to
(\ref{5.3}).
\end{proposition}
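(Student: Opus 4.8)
The plan is to verify directly that each map intertwines the flows, exploiting the fact that the system (\ref{5.3}) on $\mathbb{S}^d$ is by construction the radial projection of the homogeneous extended system (\ref{ext}), whose $z$-component is frozen. The key observation is that for the extended system (\ref{ext}), the solution with initial condition $(x_0,1)$ is $(\varphi(\tau_0+t;\tau_0,x_0,u),\,1)$, i.e. the $z$-coordinate stays equal to $1$ while the $x$-coordinate solves the original periodic system (\ref{periodic0}). Since (\ref{5.3}) is obtained from (\ref{ext}) by subtracting the radial component, the flow of (\ref{5.3}) on $\mathbb{S}^d$ is just the flow of (\ref{ext}) on $\mathbb{R}^{d+1}\setminus\{0\}$ normalized to the sphere: if $\Phi_t$ denotes the flow of (\ref{ext}) (for a fixed control $u$ and fixed phase behavior), then the flow on the sphere is $s\mapsto \Phi_t(s)/\|\Phi_t(s)\|$. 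This is a standard fact about projections of linear (more generally, radial-plus-something) vector fields to the sphere; I would state it as a brief lemma or simply recall it, since the computation leading to (\ref{5.3}) already exhibits exactly this subtraction of radial components.

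For part (i), I would argue as follows. Fix $(\tau_0,x_0)\in\mathbb{S}^1\times\mathbb{R}^d$ and $u\in\mathcal{U}$, and let $x(t):=\varphi(\tau_0+t;\tau_0,x_0,u)$ be the trajectory of (\ref{aug1}). The curve $t\mapsto(x(t),1)$ solves (\ref{ext}) (with phase $\tau_0+t$), hence its normalization $t\mapsto(x(t),1)/\sqrt{1+\|x(t)\|^2}$ solves (\ref{5.3}) on $\mathbb{S}^{d,+}$ by the radial-projection fact above; adding the phase, $t\mapsto(\tau_0+t\bmod T,\,(x(t),1)/\sqrt{1+\|x(t)\|^2})$ solves the autonomized system induced by (\ref{5.3}). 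But this is exactly $t\mapsto e_P(\varphi^a(t;(\tau_0,x_0),u))$, and its value at $t=0$ is $e_P(\tau_0,x_0)$. By uniqueness of solutions it coincides with $\psi(t;e_P(\tau_0,x_0),u)$, which is the required conjugacy identity $e_P(\varphi^a(t;(\tau_0,x_0),u))=\psi(t;e_P(\tau_0,x_0),u)$. Finally one checks that $e_P$ is a $C^\infty$ diffeomorphism onto $\mathbb{S}^1\times\mathbb{S}^{d,+}$ with $C^\infty$ inverse $(\tau,s,s_{d+1})\mapsto(\tau,s/s_{d+1})$ (valid since $s_{d+1}>0$ on $\mathbb{S}^{d,+}$), which is immediate from the explicit formulas.

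For part (ii), the argument is the same but simpler: on the equator $s_{d+1}=0$ the control terms in (\ref{5.3}) vanish and the first $d$ components reduce to (\ref{hom_S}), which is precisely the radial projection of the homogeneous part (\ref{hom}) to $\mathbb{S}^{d-1}$; this was already noted in the text right after (\ref{hom_S}). Hence the inclusion $e_{\mathbb{S}}:(\tau,s)\mapsto(\tau,s,0)$ maps solutions of the autonomized equation induced by (\ref{hom_S}) to solutions of the restriction of (\ref{5.3}) to $\mathbb{S}^1\times\mathbb{S}^{d,0}$, and it is visibly a $C^\infty$ diffeomorphism onto its image with smooth inverse $(\tau,s,0)\mapsto(\tau,s)$. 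The closing sentence should note that, since conjugacies preserve reachable sets, controllable sets, and control sets (as recalled just before the proposition), these statements transport the objects studied in Sections \ref{Section3}--\ref{Section4} to the Poincar\'e sphere.

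I expect the only real point requiring care — the "main obstacle," though it is modest — is making the radial-projection step rigorous without re-deriving (\ref{5.3}): one must observe that if $\gamma(t)$ solves $\dot\gamma = F(t,\gamma)$ in $\mathbb{R}^{d+1}\setminus\{0\}$ then $\sigma(t):=\gamma(t)/\|\gamma(t)\|$ solves $\dot\sigma = F(t,\sigma) - \langle \sigma, F(t,\sigma)\rangle\sigma$ restricted to $\mathbb{S}^d$, apply this with $F$ the right-hand side of (\ref{ext}), and match the resulting expression with (\ref{5.3}). Since the derivation of (\ref{5.3}) in the text is exactly this computation, it suffices to point back to it; no new calculation is needed. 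A secondary, purely bookkeeping point is checking that the phase component $\tau$ evolves identically ($\dot\tau=1\bmod T$) on both sides, which is trivial.
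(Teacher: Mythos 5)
Your proof is correct and follows essentially the same route as the paper: verify that $e_P$ (resp.\ $e_{\mathbb{S}}$) is a smooth diffeomorphism with the stated inverse, and use the fact that (\ref{5.3}) is by construction the radial projection of the linear extended system (\ref{ext}) — so that normalizing the trajectory $t\mapsto(x(t),1)$ of (\ref{ext}) yields the trajectory of (\ref{5.3}) through $e_P(\tau_0,x_0)$, giving the intertwining identity. The paper states this very tersely (simply displaying the equality and citing it as the conjugacy property), whereas you spell out the radial-projection lemma and the appeal to uniqueness of solutions, which is a helpful expansion but not a different argument.
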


\begin{proof}
(i) The map $e_{P}$ is $C^{\infty}$ (even analytic) with $C^{\infty}$ inverse
given by%
\[
\left(  e_{P}\right)  ^{-1}(\tau,s_{1},\ldots,s_{d},s_{d+1})=\left(
\tau,\frac{s_{1}}{s_{d+1}},\ldots,\frac{s_{d}}{s_{d+1}}\right)  \text{ for
}(\tau,s_{1},\ldots,s_{d},s_{d+1})\in\mathbb{S}^{1}\times\mathbb{S}^{d,+}.
\]
In fact, one verifies%
\[
e_{P}(\left(  e_{P}\right)  ^{-1}(\tau,s))=\left(  \tau,\frac{\left(
\frac{s_{1}}{s_{d+1}},\ldots,\frac{s_{d}}{s_{d+1}},1\right)  }{\sqrt
{1+\frac{s_{1}^{2}}{s_{d+1}^{2}}+\cdots+\frac{s_{d}^{2}}{s_{d+1}^{2}}}%
}\right)  =(\tau,s).
\]
The conjugacy property follows from
\[
\left(  \tau(t),s(t)\right)  =\left(  \tau(t),\frac{(x(t),1)}{1+\left\Vert
(x(t),1)\right\Vert }\right)  =e_{P}(\tau(t),x(t)),t\in\mathbb{R}.
\]

(ii) This trivially holds since the solutions on $\mathbb{S}^{d,0}$ are
obtained by adding the last component $0$ to the solutions on $\mathbb{S}%
^{d-1}$.
\end{proof}

Since the image of the map $e_{P}$ is contained in the (open) upper hemisphere
$\mathbb{S}^{d,+}$ a converging sequence of points $e_{P}(\tau_{k},x_{k})$ in
the image converges to an element of $\mathbb{S}^{1}\times\mathbb{S}^{d,0}$ if
and only if $\left\Vert x_{k}\right\Vert \rightarrow\infty$. Hence Proposition
\ref{Proposition_e} shows that the behavior near the equator reflects the
behavior near infinity.

Next we discuss the projection of the reachable and controllable sets to the
Poincar\'{e} sphere. Note that under the map $x\mapsto\frac{(x,1)}{\left\Vert
(x,1)\right\Vert }$ the origin $x=0\in\mathbb{R}^{d}$ is mapped to the north
pole $(0,1)\in\mathbb{S}^{d,+}\subset\mathbb{R}^{d}\times\mathbb{R}$. The
following theorem shows that for the autonomized system the closure of the
reachable set from the north pole intersects the equator in the image of the
center-unstable subbundle, and the closure of the controllable set to the
north pole intersects the equator in the image of the center-stable subbundle.
Furthermore the closure of the unique control set with nonvoid interior on
$\mathbb{S}^{1}\times\mathbb{S}^{d,+}$ intersects the equator in the image of
the center subbundle.

\begin{theorem}
\label{Theorem_ep}Suppose that the periodic system in (\ref{periodic0}) with
unconstrained controls is controllable.

(i) Then the projections to $\mathbb{S}^{1}\times\mathbb{S}^{d,+}$ of the
reachable and controllable sets, resp., of the autonomized system (\ref{aug1})
satisfy%
\begin{align*}
\overline{e_{P}(\mathrm{int}\mathbf{R}^{a}(0,0))}\cap\left(  \mathbb{S}%
^{1}\times\mathbb{S}^{d,0}\right)   &  =\overline{e_{P}(\mathcal{E}^{+,0}%
)}\cap\left(  \mathbb{S}^{1}\times\mathbb{S}^{d,0}\right)  ,\\
\overline{e_{P}(\mathrm{int}\mathbf{C}^{a}(0,0))}\cap\left(  \mathbb{S}%
^{1}\times\mathbb{S}^{d,0}\right)   &  =\overline{e_{P}(\mathcal{E}^{-,0}%
)}\cap\left(  \mathbb{S}^{1}\times\mathbb{S}^{d,0}\right)  .
\end{align*}

(ii) The induced system on $\mathbb{S}^{1}\times\mathbb{S}^{d,+}$ has a unique
control set with nonvoid interior given by $D_{P}^{a}=e_{P}(D^{a})$ satisfying%
\[
\overline{\mathrm{int}D_{P}^{a}}\cap(\mathbb{S}^{1}\times\mathbb{S}%
^{d,0})=\overline{e_{P}\left(  \mathcal{E}^{0}\right)  }\cap\left(
\mathbb{S}^{1}\times\mathbb{S}^{d,0}\right)  .
\]
In particular, $\mathrm{int}D^{a}$ is bounded if and only if $\overline
{\mathrm{int}D_{P}^{a}}\subset\mathbb{S}^{1}\times\mathbb{S}^{d,+}$.
\end{theorem}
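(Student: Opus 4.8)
The plan is to reduce both assertions to a single geometric fact about the stereographic map $e_{P}$: a sequence $e_{P}(\tau_{k},x_{k})$ in $\mathbb{S}^{1}\times\mathbb{S}^{d,+}$ can accumulate at a point of the equator $\mathbb{S}^{1}\times\mathbb{S}^{d,0}$ only if $\Vert x_{k}\Vert\rightarrow\infty$, and in that case a limit point must be $(\tau,s,0)$ with $s=\lim x_{k}/\Vert x_{k}\Vert$. Consequently, for any $S\subset\mathbb{S}^{1}\times\mathbb{R}^{d}$, the intersection $\overline{e_{P}(S)}\cap(\mathbb{S}^{1}\times\mathbb{S}^{d,0})$ records exactly the asymptotic directions of $S$ over each phase $\tau$.

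I would isolate this as a lemma: suppose $\mathcal{F}=\{(\tau,x):x\in F_{\tau}\}$ is a subbundle whose linear fibers $F_{\tau}$ vary continuously with $\tau\in\mathbb{S}^{1}$ (so that $\mathcal{F}$ is closed in $\mathbb{S}^{1}\times\mathbb{R}^{d}$), let $G_{\tau}$ be complements, $\mathbb{R}^{d}=F_{\tau}\oplus G_{\tau}$, let $\mathcal{G}_{b}\subset\mathcal{G}:=\{(\tau,x):x\in G_{\tau}\}$ be uniformly bounded, and assume $\mathcal{F}\subset S\subset\mathcal{G}_{b}\oplus\mathcal{F}$; then
\[
\overline{e_{P}(S)}\cap(\mathbb{S}^{1}\times\mathbb{S}^{d,0})=\overline{e_{P}(\mathcal{F})}\cap(\mathbb{S}^{1}\times\mathbb{S}^{d,0})=\{(\tau,s,0):s\in F_{\tau},\ \Vert s\Vert=1\}.
\]
The proof is routine. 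For "$\supseteq$" one uses $e_{P}(\tau,ns)\rightarrow(\tau,s,0)$ as $n\rightarrow\infty$ for a unit vector $s\in F_{\tau}$ together with $\mathcal{F}\subset S$. For "$\subseteq$" one takes $(\tau_{k},x_{k})\in S$ with $e_{P}(\tau_{k},x_{k})\rightarrow(\tau,s,0)$, writes $x_{k}=g_{k}+f_{k}$ along $G_{\tau_{k}}\oplus F_{\tau_{k}}$ with $\Vert g_{k}\Vert$ bounded, notes that $\Vert x_{k}\Vert\rightarrow\infty$ forces $\Vert f_{k}\Vert\rightarrow\infty$, $\Vert f_{k}\Vert/\Vert x_{k}\Vert\rightarrow1$ and $f_{k}/\Vert x_{k}\Vert\rightarrow s$, hence $f_{k}/\Vert f_{k}\Vert\in F_{\tau_{k}}\cap\mathbb{S}^{d-1}$ converges to $s$, and closedness of $\mathcal{F}$ gives $s\in F_{\tau}$.

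Granting the lemma, part (i) is immediate: for the reachable set take $S=\mathrm{int}\mathbf{R}^{a}(0,0)$, $\mathcal{F}=\mathcal{E}^{+,0}$, $\mathcal{G}=\mathcal{E}^{-}$ and $\mathcal{G}_{b}=\mathcal{K}^{-}$; the sandwich $\mathcal{E}^{+,0}\subset\mathrm{int}\mathbf{R}^{a}(0,0)\subset\mathcal{K}^{-}\oplus\mathcal{E}^{+,0}$ with $\mathcal{K}^{-}$ uniformly bounded is exactly Theorem \ref{Theorem_sub}(i) (the left inclusion uses $0\in K_{\tau}^{-}$). The controllable statement is the mirror image with $\mathcal{F}=\mathcal{E}^{-,0}$, $\mathcal{G}_{b}=\mathcal{K}^{+}$, via Theorem \ref{Theorem_sub}(ii). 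For part (ii), $e_{P}$ is a conjugacy (Proposition \ref{Proposition_e}(i)) and a homeomorphism of $\mathbb{S}^{1}\times\mathbb{R}^{d}$ onto the open set $\mathbb{S}^{1}\times\mathbb{S}^{d,+}$, so $D_{P}^{a}:=e_{P}(D^{a})$ is a control set with $\mathrm{int}D_{P}^{a}=e_{P}(\mathrm{int}D^{a})$ and $\overline{\mathrm{int}D_{P}^{a}}=\overline{e_{P}(\mathrm{int}D^{a})}$, and it is the unique control set with nonvoid interior because $D^{a}$ is, by Theorem \ref{Theorem_cs1}. Applying the lemma with $S=\mathrm{int}D^{a}$, $\mathcal{F}=\mathcal{E}^{0}$, $\mathcal{G}=\mathcal{E}^{-}\oplus\mathcal{E}^{+}$ and $\mathcal{G}_{b}=Y(\cdot)^{-1}\overline{\mathcal{K}^{-}}\oplus Y(\cdot)\mathcal{K}^{+}$ -- uniformly bounded since $Y(\tau)$ preserves the Floquet spaces (hence $E_{\tau}^{\pm}$) and $\sup_{\tau\in[0,T]}(\Vert Y(\tau)\Vert+\Vert Y(\tau)^{-1}\Vert)<\infty$ -- with the sandwich being precisely (\ref{D_a2}) (again using $0\in K_{\tau}^{\pm}$ for the left inclusion), one obtains $\overline{\mathrm{int}D_{P}^{a}}\cap(\mathbb{S}^{1}\times\mathbb{S}^{d,0})=\overline{e_{P}(\mathcal{E}^{0})}\cap(\mathbb{S}^{1}\times\mathbb{S}^{d,0})$. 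The last equivalence then drops out: $\overline{\mathrm{int}D_{P}^{a}}\subset\mathbb{S}^{1}\times\mathbb{S}^{d,+}$ iff this equator set is empty iff no $E_{\tau}^{0}$ contains a unit vector iff $\mathcal{E}^{0}$ is trivial iff, by Theorem \ref{Theorem_cs1}, $\mathrm{int}D^{a}$ is bounded.

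The one point I would write out with care -- and the nearest thing to an obstacle -- is the standing hypothesis of the lemma that $\mathcal{E}^{+,0}$, $\mathcal{E}^{-,0}$ and $\mathcal{E}^{0}$ are closed with fibers varying continuously over the circle $\mathbb{S}^{1}=\mathbb{R}/T\mathbb{Z}$. This follows from $E_{\tau}^{\bullet}=X(\tau,0)E_{0}^{\bullet}$ with $X(\cdot,0)$ continuous, together with the periodicity $X(T,0)E_{0}^{\bullet}=E_{0}^{\bullet}$ coming from $X(T,0)$-invariance of its generalized eigenspaces, so that the fiber map descends to a continuous Grassmannian-valued map on the circle. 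All remaining steps are bookkeeping with the inclusions already furnished by Theorems \ref{Theorem_sub} and \ref{Theorem_cs1}.
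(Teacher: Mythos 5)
Your proof is correct and follows essentially the same route as the paper's: the core observation in both is that uniform boundedness of the complementary ($\mathcal{K}^{\pm}$-type) factors forces the equator trace of $\overline{e_{P}(S)}$ to equal that of the unbounded subbundle, established by exactly the kind of $\Vert x_{k}\Vert\to\infty$ limit computation you give. Your packaging of this as a reusable sandwich lemma, applied in turn with the inclusions of Theorem \ref{Theorem_sub} and (\ref{D_a2}), is a cleaner organization of the same argument (and makes explicit the closedness of the subbundles that the paper's limit argument implicitly uses), but it is not a different proof.
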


\begin{proof}
(i) The conjugacy property from Proposition \ref{Proposition_e}(i) shows that
$e_{P}(\mathbf{R}^{a}(0,0))$ and $e_{P}(\mathbf{C}^{a}(0,0))$ are the
reachable and controllable set, resp., of $(0,(0,1))=e_{P}(0,0)$ in
$\mathbb{S}^{1}\times\mathbb{S}^{d,+}$, where $(0,1)$ is the north pole, and
$D_{P}^{a}:=e_{P}(D^{a})$ is the unique control set with nonvoid interior
satisfying $\mathrm{int}(e_{P}(D^{a}))=e_{P}(\mathrm{int}D^{a})$. The
inclusion $e_{P}(\mathcal{E}^{+,0})\subset\mathrm{int}\mathbf{R}^{a}(0,0)$
follows from Theorem \ref{Theorem_sub} implying the inclusion
\textquotedblleft$\supset$\textquotedblright. For the converse, let
$(\tau,z,0)\in\overline{e_{P}(\mathrm{int}\mathbf{R}^{a}(0,0))}\cap\left(
\mathbb{S}^{1}\times\mathbb{S}^{d,0}\right)  $. By Theorem \ref{Theorem_sub}
there are $b_{k}\in K_{\tau_{k}}^{-}$ and $x_{k}\in E_{\tau_{k}}^{+,0}$ with
$e_{P}(\tau_{k},b_{k}+x_{k})\rightarrow(\tau,z,0)$ and $\left\Vert
x_{k}\right\Vert \rightarrow\infty$. This implies $\left\Vert b_{k}%
+x_{k}\right\Vert \rightarrow\infty$ and%
\[
e_{P}(\tau_{k},b_{k}+x_{k})=\left(  \tau_{k},\frac{b_{k}}{\left\Vert
(b_{k}+x_{k},1)\right\Vert }+\frac{x_{k}}{\left\Vert (b_{k}+x_{k}%
,1)\right\Vert },\frac{1}{\left\Vert (b_{k}+x_{k},1)\right\Vert }\right)  .
\]
Using that the $b_{k}$ remain bounded, one finds%
\[
\frac{b_{k}}{\left\Vert (b_{k}+x_{k},1)\right\Vert }\rightarrow0,\,\frac
{1}{\left\Vert (b_{k}+x_{k},1)\right\Vert }\rightarrow0\text{, and }%
\frac{x_{k}}{\left\Vert (b_{k}+x_{k},1)\right\Vert }-\frac{x_{k}}{\left\Vert
(x_{k},1)\right\Vert }\rightarrow0\text{.}%
\]
Then it follows that
\[
e_{P}(\tau_{k},x_{k})=\left(  \tau_{k},\frac{(x_{k},1)}{\left\Vert
(x_{k},1)\right\Vert }\right)  \rightarrow(\tau,z,0).
\]
This shows that $(\tau,z,0)\in\overline{e_{P}(\mathcal{E}^{+,0})}\cap\left(
\mathbb{S}^{1}\times\mathbb{S}^{d,0}\right)  $. The assertions for the
controllable set follow similarly.

(ii) By Theorem \ref{Theorem_cs1} it follows that the unbounded part of
$\mathrm{int}D^{a}$ is $\mathcal{E}^{0}$.
\end{proof}

\begin{remark}
\label{Remark5.4}Theorem \ref{Theorem_ep}(i) shows that for the autonomized
system on the Poincar\'{e} sphere bundle $\mathbb{S}^{1}\times\mathbb{S}^{d}$
the closure of the reachable set from the north pole $e_{P}(0,0)=(0,(0,1))\in
\mathbb{S}^{1}\times\mathbb{S}^{d}$ intersects the \textquotedblleft
equator\textquotedblright\ $\mathbb{S}^{1}\times\mathbb{S}^{d,0}$ in the image
under $e_{P}$ of the center-stable subbundle $\mathcal{E}^{+,0}$. A closer
look at the dynamics on the equator reveals a finer picture: Consider the
Floquet bundles $\{(\tau,x)\in\mathbb{S}^{1}\times\mathbb{R}^{d}\left\vert
x\in L(\lambda_{j},\tau)\right.  \}$. The projected flow on the projective
bundle $\mathbb{S}^{1}\times\mathbb{P}^{d-1}$ goes from the projected Floquet
bundle for $\lambda_{j}$ to the projected Floquet bundles with $\lambda
_{i}>\lambda_{j}$. This can be made precise by some notions from topological
dynamics: the projected Floquet bundles form the finest Morse decomposition,
in particular, they coincide with the chain recurrent components (cf. Colonius
and Kliemann \cite[Section 7.2 and Theorem 8.3.3]{ColK14}). For the relation
to the flow on $\mathbb{S}^{1}\times\mathbb{S}^{d-1}$ one can prove that for
every chain recurrent component on $\mathbb{S}^{1}\times\mathbb{P}^{d-1}$
there are at most two chain recurrent components on $\mathbb{S}^{1}%
\times\mathbb{S}^{d-1}$ projecting to it. By Proposition \ref{Proposition_e}%
(ii) this also describes the flow on the \textquotedblleft
equator\textquotedblright\ $\mathbb{S}^{1}\times\mathbb{S}^{d,0}$. Examples
\ref{Example6.2} and \ref{Example6.3} illustrate some of these claims.
\end{remark}

\section{Examples\label{Section6}}

First we note the following consequence of Theorem \ref{Theorem_cs1}. In the
scalar case with $d=1$ one obtains from the inclusions in (\ref{D_a2}) that
one of the following cases holds: The set $\mathrm{int}D^{a}$ is contained
either in $\mathcal{K}^{-}$ or in $\mathcal{K}^{+}$ (if the Floquet exponent
is negative or positive, resp.) or $D^{a}=\mathcal{E}^{0}=\mathbb{S}^{1}%
\times\mathbb{R}$ (if $0$ is the Floquet exponent). In the first two cases
$D^{a}$ is bounded, in the third case it is unbounded.

\begin{example}
\label{Example6.1}Consider the periodic scalar example%
\begin{equation}
\dot{x}(t)=a(t)x(t)+u(t),\quad u(t)\in U=[-1,1], \label{6.1}%
\end{equation}
with $a(t):=-1$ for $t\in\lbrack0,1]$ and $a(t):=-2$ for $t\in(1,2]$ extended
to a $2$-periodic function on $\mathbb{R}$. Note that for $t\geq s,x_{0}%
\in\mathbb{R}$, and $u\in\mathcal{U}$ the solution is%
\[
\varphi(t;s,x_{0},u)=X(t,s)x_{0}+\int_{s}^{t}X(t,\sigma)u(\sigma)d\sigma\text{
with }X(t,s)=e^{\int_{s}^{t}a(\sigma)d\sigma}>0.
\]
The system with unconstrained controls is controllable, and the stable
subspace is $E_{\tau}^{-}=\mathbb{R}$ for all $\tau\in\lbrack0,2]$. Lemma
\ref{Lemma_point} implies that $\mathbb{S}^{1}\times\{0\}\subset
\mathrm{int}\mathbf{C}^{a}(0,0)$ for the autonomized system, and taking
$u\equiv0$ one sees that $\mathbb{S}^{1}\times\mathbb{R}=\overline
{\mathbf{C}^{a}(0,0)}$, hence $\mathbf{C}^{a}(0,0)=\mathbb{S}^{1}%
\times\mathbb{R}$. By Theorem \ref{Theorem_cs1} there is a unique control set
$D^{a}$ with nonvoid interior and $\mathbb{S}^{1}\times\{0\}\subset
\mathrm{int}D^{a}$. This yields $D^{a}=\overline{\mathbf{R}^{a}(0,0)}$. Recall
from Lemma \ref{Lemma3.3} that%
\begin{equation}
\mathbf{R}^{a}(0,0)=\{(\tau,x)\in\mathbb{S}^{1}\times\mathbb{R}^{d}\left\vert
x\in\mathbf{R}_{2\mathbb{N+\tau}}(0,0)\right.  \}. \label{6.2}%
\end{equation}
The solutions satisfy, for $t\geq s\geq0$ and $u\in\mathcal{U}$,%
\[
\varphi(t;s,x_{0},-1)\leq\varphi(t;s,x_{0},u)=-\varphi(t;s,-x_{0}%
,-u)\leq\varphi(t;s,x_{0},1).
\]
This implies that $\varphi(t;0,0,u)\leq\varphi(t;0,0,1)$ for all $t\geq0$ and
$u\in\mathcal{U}$. Since $U=-U$ the equation above with $x_{0}=0$ implies that
the reachable sets $\mathbf{R}_{t}(0,0)$ are symmetric around $0$. Together
with (\ref{6.2}) this shows that for the computation of $D^{a}$ it suffices to
determine $\overline{\mathbf{R}^{a}(0,0)}\cap\lbrack0,\infty)$\textbf{.} By
Proposition \ref{proposition_R} $\mathbf{R}_{2\mathbb{N+\tau}}(0,0)$ is
convex. Using $u\equiv0\,$\ and $u\equiv1$ one finds that $\mathbf{R}%
_{2k\mathbb{+\tau}}(0,0)\cap\lbrack0,\infty)=$ $[0,\varphi(2k+\tau;0,0,1)]$
for all $k\in\mathbb{N}$ and $\tau\in\mathbb{S}^{1}=[0,2)$.

\textbf{Claim:} For fixed $k\in\mathbb{N}$ the reachable sets $\mathbf{R}%
_{2k+\tau}(0,0)$ are increasing with $\tau\in\lbrack0,1]$ and decreasing with
$\tau\in\lbrack1,2)$. For fixed $\tau\in\lbrack0,2)$ they are increasing with
$k\in\mathbb{N}$ and they are given by%
\begin{equation}
\mathbf{R}_{2\mathbb{N}+\tau}(0,0)=\bigcup_{k\in\mathbb{N}}\mathbf{R}%
_{2k+\tau}(0,0)=\left(  -\frac{r(\tau)}{1-e^{-3}},\frac{r(\tau)}{1-e^{-3}%
}\right)  , \label{R01}%
\end{equation}
where%
\[
r(\tau):=\left\{
\begin{array}
[c]{ccc}%
\frac{1}{2}e^{-2-\tau}-e^{-3}-\frac{1}{2}e^{-\tau}+1 & \text{for} & \tau
\in\lbrack0,1]\\
\frac{1}{2}e^{2-2\tau}\left(  1-e^{-1}-e^{2\tau-5}\right)  +\frac{1}{2} &
\text{for} & \tau\in\lbrack1,2)
\end{array}
\right.  .
\]
Since in the proof of this claim we always take control $u\equiv1$ we suppress
this argument in $\varphi$. Let $x_{0}\in\mathbb{R}$ and compute for $\tau
\in\lbrack0,1]$ using $2$-periodicity%
\begin{align}
\varphi(2+\tau;\tau,x_{0})  &  =\varphi(2+\tau;2,\varphi(2;\tau,x_{0}%
))=\varphi(\tau;0,\varphi(2;\tau,x_{0}))\nonumber\\
&  =\varphi(\tau;0,\varphi(2;1,\varphi(1;\tau,x_{0})))=e^{-3}x_{0}+r(\tau).
\label{6.8}%
\end{align}
For $\tau\in\lbrack1,2]$ compute%
\begin{align*}
\varphi(\tau;0,0)  &  =\varphi(\tau;1,\varphi(1;0,0))=\frac{1}{2}e^{-2\tau
+2}-e^{-2\tau+1}+\frac{1}{2},\\
\varphi(2+\tau;\tau,x_{0})  &  =\varphi(2+\tau;3,\varphi(3;\tau,x_{0}%
))=\varphi(\tau;1,\varphi(3;\tau,x_{0}))=e^{-3}x_{0}+r(\tau).
\end{align*}
Repeated use of these formulas, periodicity, and induction show for
$k\in\mathbb{N}$ and $\tau\in\lbrack0,2]$
\begin{align}
&  \varphi(2(k+1)+\tau;0,0)=\varphi(2+\tau;\tau,\varphi(2k+\tau;0,0))=e^{-3}%
\varphi(2k+\tau;0,0)+r(\tau)\nonumber\\
&  =e^{-3(k+1)}\varphi(\tau;0,0)+\sum\nolimits_{j=0}^{k}e^{-3j}r(\tau).
\label{6.8b}%
\end{align}
Equation (\ref{6.8b}) implies $\lim_{k\rightarrow\infty}\varphi(2k+\tau
;0,0)=\frac{r(\tau)}{1-e^{-3}}$ proving (\ref{R01}).

The sets $\mathbf{R}_{2k+\tau}(0,0)$ are increasing with $k$ since%
\begin{align*}
&  \varphi(2(k+1)+\tau;0,0)-\varphi(2k+\tau;0,0)=(e^{-3(k+1)}-e^{-3k}%
)\varphi(\tau;0,0)+e^{-3k}r(\tau)\\
&  =\left\{
\begin{array}
[c]{lll}%
e^{-3k-3-\tau}\left(  \frac{1}{2}e^{3}+\frac{1}{2}e-1\right)  >0 & \text{for}
& \tau\in\lbrack0,1]\\
e^{-2\tau+1}\left[  e^{-3}\left(  \frac{1}{2}e-1\right)  +\frac{1}{2}\right]
>0 & \text{for} & \tau\in\lbrack1,2]
\end{array}
\right.  .
\end{align*}
The sets $\mathbf{R}_{2\mathbb{N}+\tau}(0,0)$ are increasing with $\tau
\in\lbrack0,1]$ since for $0\leq\sigma\leq\tau\leq1$
\[
\varphi(2k+\tau;0,0)=\varphi(\tau;0,\varphi(2k;0,0))=e^{-\tau}(\varphi
(2k;0,0)-1)+1\leq\varphi(2k+\sigma;0,0).
\]
Here we use that $e^{-\tau}\leq e^{-\sigma}$ and that for $x\geq1$ one has
$a(t)x+u\leq0$ for all $u\in U$ implying $\varphi(2k;0,0)-1\leq0$.

The sets $\mathbf{R}_{2\mathbb{N}+\tau}(0,0)$ are decreasing with $\tau
\in\lbrack1,2]$ since for $1\leq\sigma\leq\tau\leq2$%
\begin{align*}
\varphi(2k+\tau;0,0)  &  =\varphi(\tau;1,\varphi(2k+1;0,0))=e^{2-2\tau}%
\varphi(2k+1;0,0)+\int_{1}^{\tau}e^{-2(\tau-s)}ds\\
&  =e^{2-2\tau}(\varphi(2k+1;0,0)-\frac{1}{2})+\frac{1}{2}\leq\varphi
(2k+\sigma;0,0).
\end{align*}
Here we use $e^{2-2\tau}\leq e^{2-2\sigma}$ and $\varphi(2k+1;0,0)\geq
\varphi(1;0,0)=1-e^{-1}>\frac{1}{2}$.

Fig. 1 presents a sketch of the control set $D^{a}$ in $\mathbb{S}^{1}%
\times\mathbb{R}$.
\end{example}

The following two examples are autonomous two dimensional linear control
systems. Hence it is not necessary to autonomize the system and the results on
the control sets in $\mathbb{R}^{2}$ follow from Sontag \cite[Corollary
3.6.7]{Son98}. These examples serve as illustrations for the projection to the
Poincar\'{e} sphere.

\begin{example}
\label{Example6.2}Consider%
\begin{equation}
\dot{x}(t)=x(t)+u(t),\quad\dot{y}(t)=-y(t)+u(t), \label{Ex6.2b}%
\end{equation}
with $u(t)\in U=[-1,1]$. Here the origin is a saddle for the uncontrolled
system. For the control system induced on the Poincar\'{e} sphere
$\mathbb{S}^{2}$ a computation based on (\ref{5.3}) yields%
\begin{align}
\dot{s}_{1}  &  =\left[  1-s_{1}^{2}+s_{2}^{2}-u(s_{1}s_{3}+s_{2}%
s_{3})\right]  s_{1}+us_{3}\nonumber\\
\dot{s}_{2}  &  =\left[  -1-s_{1}^{2}+s_{2}^{2}-u(s_{1}s_{3}+s_{2}%
s_{3}))\right]  s_{2}+us_{3}\label{Ex6.2}\\
\dot{s}_{3}  &  =\left[  -s_{1}^{2}+s_{2}^{2}-u(s_{1}s_{3}+s_{2}s_{3})\right]
s_{3}.\nonumber
\end{align}
For $u=0$ the north pole $(0,0,1)$ is the only equilibrium, and for
$u\not =0\,$the equilibria move away from the north pole. Theorem
\ref{Theorem_cs1} implies that there is a unique control set $D^{a}%
\subset\mathbb{S}^{1}\times\mathbb{R}^{2}$ with nonvoid interior and that it
is bounded. By Theorem \ref{Theorem_ep}(ii) $D_{P}^{a}=e_{P}(D^{a})$ is the
unique control set with nonvoid interior on the upper hemisphere
$\mathbb{S}^{2,+}$. On the equator one has $s_{3}=0$ and the equation reduces
to%
\[
\dot{s}_{1}=2s_{2}^{2}s_{1},\quad\dot{s}_{2}=-2s_{1}^{2}s_{2}.
\]
This coincides with the projection of the homogeneous part of the original
equation in $\mathbb{R}^{2}$ onto the unit circle $\mathbb{S}^{1}$. The
equilibria are $(\pm1,0,0)$ and $(0,\pm1,0)$. Linearization on the equator
$\mathbb{S}^{2,0}$ yields in $e^{1}=(1,0,0)$ and $e^{2}=(0,1,0)$%
\[
\dot{x}_{2}=-2x_{2}\text{ and }\dot{x}_{1}=2x_{1},\text{ resp.}%
\]
If we linearize on the sphere $\mathbb{S}^{2}$ we have to linearize
(\ref{Ex6.2}) in $e^{1}$ and $e^{2}$ with respect to the second and third
arguments only. We obtain $\left(
\begin{array}
[c]{cc}%
\mp2 & u\\
0 & -1
\end{array}
\right)  $ with eigenvalues $\mp2$ and $\mp1$ with eigenvectors given by
$\left(  x,0\right)  ^{\top}$ and $\left(  \pm u\cdot x,x\right)  ^{\top
},\allowbreak x\not =0$, resp.

The orthogonal projection of the system on the upper hemisphere $\mathbb{S}%
^{2,+}$ to the unit disk yields the global phase portrait with control set
$e_{P}(D^{a})$ sketched in\textbf{ }Figure 2. Observe that near the equator
$s_{3}$ is close to $0$, hence the control vector field in (\ref{Ex6.2}) goes
to $0$ for $s_{3}\rightarrow0$.
\end{example}

\begin{remark}
Perko \cite{Perko} considers the differential equation (\ref{Ex6.2b}) with
$u=0$. In this case the formulas derived above coincide with his results. The
global phase portrait in Figure 2 is similar to \cite[Figure 5 on p.
275]{Perko} with the additional feature that around the north pole of
$\mathbb{S}^{2}$ the image of the control set occurs. Perko \cite{Perko}, as
well as Lefschetz \cite[pp. 202]{Lefschetz}, actually, does these computations
for differential forms, i.e., in the cotangent bundle of the sphere.
\end{remark}

The following example is a slight modification of Example \ref{Example6.2}. It
illustrates Remark \ref{Remark5.4} since the flow on the intersection of
$\overline{e_{P}(\mathbf{R}(0))}$ with the equator is nontrivial.

\begin{example}
\label{Example6.3}Consider the autonomous system given by%
\[
\dot{x}(t)=x(t)+u(t),\quad\dot{y}(t)=2y(t)+u(t),
\]
with $u(t)\in U=[-1,1]$. Note that for constant $u$ the equilibrium given by
$(-u,-u/2)$ is an unstable knot. Since the eigenvalues $1$ and $2$ are
positive, the control set $D^{a}$ with nonvoid interior is bounded. The
reachable set from the origin coincides with the unstable subspace and
satisfies $\mathbf{R}(0)=E^{+}=\mathbb{R}^{2}$. For the projection to the
Poincar\'{e} sphere one obtains%
\[
\overline{e_{P}(\mathbf{R}(0))}\cap\mathbb{S}^{2,0}=\overline{e_{P}(E^{+}%
)}\cap\mathbb{S}^{2,0}=\mathbb{S}^{2,0}.
\]
On the other hand, Proposition \ref{Proposition_e}(ii) shows that the flow on
the equator $\mathbb{S}^{2,0}$ is determined by the flow on the unit circle
$\mathbb{S}^{1}$ induced by the homogeneous part (with $u\equiv0$). The
Floquet subspaces $L(1)=\mathbb{R}\times\{0\}$ and $L(2)=\{0\}\times
\mathbb{R}$ are given by the eigenspaces and intersect $\mathbb{S}^{1}$ in the
equilibria $(\pm1,0)$ and $(0,\pm1)$, resp. All other points $s_{0}%
\in\mathbb{S}^{1}$ satisfy $\lim_{t\rightarrow-\infty}s(t,s_{0})=(\pm1,0)$ and
$\lim_{t\rightarrow\infty}s(t,s_{0})=(0,\pm1)$. The orthogonal projection of
the system on the upper hemisphere $\mathbb{S}^{2,+}$ to the unit disk yields
the global phase portrait with control set $e_{P}(D^{a})$ sketched in Figure 3.
\end{example}

\section{Controllability properties of quasi-affine systems\label{Section7}}

In this section we apply the results above to the study of controllability
properties for quasi-affine control systems of the form (\ref{qaffine1}).
Explicitly, system (\ref{qaffine1}) may be written as%
\begin{equation}
\dot{x}(t)=A_{0}x(t)+\sum_{i=1}^{p}v_{i}(t)A_{i}x(t)+B(v(t))u(t),\quad
(u,v)\in\mathcal{U}\times\mathcal{V}. \label{qaffine2}%
\end{equation}
We denote the solutions of (\ref{qaffine2}) with initial condition
$x(0)=x_{0}\in\mathbb{R}^{d}$ by $\psi(t;x_{0},u,v),\allowbreak\,t\in
\mathbb{R}$. The homogeneous part of (\ref{qaffine2}) is the bilinear control
system
\begin{equation}
\dot{x}(t)=A(v(t))x(t),\quad v\in\mathcal{V}, \label{bilinear}%
\end{equation}
and we denote the solutions of (\ref{bilinear}) with $x(0)=x_{0}$ by
$\psi_{\hom}(t;x_{0},v),\,t\in\mathbb{R}$. Control systems (\ref{qaffine2})
and (\ref{bilinear}) come with associated flows given by
\begin{align*}
\Psi &  :\mathbb{R}\times\mathcal{U}\times\mathcal{V}\times\mathbb{R}%
^{d}\rightarrow\mathcal{U}\times\mathcal{V}\times\mathbb{R}^{d}:\Psi
(t;u,v,x):=(u(t+\cdot),v(t+\cdot),\psi(t;x,u,v)),\\
&  \left.  \Psi_{\hom}:\mathbb{R}\times\mathcal{V}\times\mathbb{R}%
^{d}\rightarrow\mathcal{V}\times\mathbb{R}^{d}:\Psi_{\hom}(t;v,x):=(v(t+\cdot
),\psi_{\hom}(t;x,v)),\right.
\end{align*}
resp. Here $u(t+\cdot)(s):=u(t+s)$ and $v(t+\cdot)(s):=v(t+s),s\in\mathbb{R}$,
are the right shifts and $\mathcal{U}\subset L^{\infty}(\mathbb{R}%
,\mathbb{R}^{m})$ and $\mathcal{V}\subset L^{\infty}(\mathbb{R},\mathbb{R}%
^{p})$ are endowed with a metric for the weak$^{\ast}$ topology. Then
$\mathcal{U}$ and $\mathcal{V}$ are compact and chain transitive; cf. Colonius
and Kliemann \cite[Chapter 4]{ColK00} or Kawan \cite[Section 1.4]{Kawa13}. The
flow $\Psi_{\hom}$ is a continuous linear skew product flow on the vector
bundle $\mathcal{V}\times\mathbb{R}^{d}$ since (\ref{bilinear}) is
control-affine. On the other hand, the affine flow $\Psi$ on the vector bundle
$(\mathcal{U}\times\mathcal{V})\times\mathbb{R}^{d}$ is not continuous, in
general, even if we suppose that $B(v):=B_{0}+\sum_{i=1}^{p}v_{i}B_{i}$ with
$B_{0},B_{1},\ldots,B_{p}\in\mathbb{R}^{d\times m}$. In fact, if products
$v_{i}u_{j}$ occur on the right hand side of (\ref{qaffine2}), the system is
not control-affine, and hence continuity does not hold.

For any periodic $v\in\mathcal{V}$ one obtains a periodic linear control
system%
\begin{equation}
\dot{x}(t)=A(v(t))x(t)+B(v(t))u(t),\quad u\in\mathcal{U}. \label{periodic_v}%
\end{equation}
Fix a $T_{v}$-periodic control $v\in\mathcal{V}$ and parametrize the unit
circle $\mathbb{S}^{1}$ by $\tau\in\lbrack0,T_{v})$. A corresponding augmented
autonomous control system on $\mathbb{S}^{1}\times\mathbb{R}^{d}$ is defined
by%
\begin{equation}
\psi_{v}^{a}(t;(\tau_{0},x_{0}),u)=(t+\tau_{0}\operatorname{mod}T_{v}%
,\psi(t;x_{0},u,v(\tau_{0}+\cdot))),\quad u\in\mathcal{U}. \label{7.5}%
\end{equation}
The reachable set of $(\tau_{0},x_{0})\in\mathbb{S}^{1}\times\mathbb{R}^{d}$
is%
\[
\mathbf{R}_{v}^{a}(\tau_{0},x_{0}):=\{\psi_{v}^{a}(t;(\tau_{0},x_{0}%
),u)\left\vert t\geq0\text{ and }u\in\mathcal{U}\right.  \mathcal{\}}.
\]
Analogously, the controllable sets $\mathbf{C}_{v}^{a}(\tau_{0},x_{0})$ are
defined. If the system in (\ref{periodic_v}) without control restriction is
controllable, Theorem \ref{Theorem_cs1} shows that one finds a unique control
set $D_{v}^{a}=\overline{\mathbf{R}_{v}^{a}(0,0)}\cap\mathbf{C}_{v}^{a}(0,0)$
with nonvoid interior of the autonomized system (\ref{7.5}) and $\mathbb{S}%
^{1}\times\{0\}\subset\mathrm{int}D_{v}^{a}$.

Let $\pi_{2}:\mathbb{S}^{1}\times\mathbb{R}^{d}\rightarrow\mathbb{R}^{d}%
,~\pi_{2}(\tau,x)=x$ for $(\tau,x)\in\mathbb{S}^{1}\times\mathbb{R}^{d}$. The
following theorem establishes the existence of a control set for quasi-affine
systems, defined analogously as in Definition \ref{Definition_control_sets},
containing all $\pi_{2}(D_{v}^{a})$ for the control sets $D_{v}^{a}$ for
periodic $v\in\mathcal{V}$.

\begin{theorem}
\label{Theorem7.1}Suppose that the following assumptions hold:

(i) for every periodic $v\in\mathcal{V}$ the periodic linear system in
(\ref{periodic_v}) with unconstrained controls $u\in L^{\infty}(\mathbb{R}%
,\mathbb{R}^{m})$ is controllable;

(ii) the quasi-affine system (\ref{qaffine2}) is locally accessible, i.e.,
$\mathbf{R}_{\leq S}(x)$ and $\mathbf{C}_{\leq S}(x)$ have nonvoid interiors
for all $S>0$ and all $x\in\mathbb{R}^{d}$.

Then the quasi-affine system (\ref{qaffine2}) has a control set $D$ with
nonvoid interior such that for all periodic $v\in\mathcal{V}$ the control sets
$D_{v}^{a}$ of the autonomized periodic linear control system (\ref{7.5})
satisfy $\pi_{2}(D_{v}^{a})\subset D.$
\end{theorem}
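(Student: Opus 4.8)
The plan is to show that the set $D:=\overline{\mathbf R(0)}\cap\mathbf C(0)$, with $\mathbf R(0)$ and $\mathbf C(0)$ the reachable set from and the controllable set to the origin of the quasi-affine system (\ref{qaffine2}), is the required control set; this is the quasi-affine analogue of the set used in Lemma \ref{Lemma_point}. The starting point is the elementary observation that, for a fixed periodic $v\in\mathcal V$, a control $u\in\mathcal U$ for the periodic linear system (\ref{periodic_v}) together with a time shift of $v$ forms an admissible control pair for (\ref{qaffine2}) with the same trajectory: by definition $\psi_{v}^{a}(t;(\tau_0,x_0),u)=(t+\tau_0\bmod T_v,\psi(t;x_0,u,v(\tau_0+\cdot)))$, and $v(\tau_0+\cdot)\in\mathcal V$ because $\mathcal V$ is shift invariant. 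Hence, if $(\tau,x)\in D_v^a=\overline{\mathbf R_v^a(0,0)}\cap\mathbf C_v^a(0,0)$, then on the one hand $x$ is a limit of endpoints of trajectories of (\ref{qaffine2}) issuing from $0$, so $x\in\overline{\mathbf R(0)}$, and on the other hand some trajectory of (\ref{qaffine2}) steers $x$ to $0$, so $0\in\mathbf R(x)$, i.e.\ $x\in\mathbf C(0)$. Thus $\pi_2(D_v^a)\subset\overline{\mathbf R(0)}\cap\mathbf C(0)=D$ for every periodic $v\in\mathcal V$, which is the inclusion asserted by the theorem.

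It then remains to show that $D$ is a control set with nonvoid interior. For $0\in\mathrm{int}\,D$, apply assumption~(i) to the constant control $v\equiv0\in\mathcal V$: the autonomous linear system $\dot x=A_0x+B(0)u$ is controllable, so Proposition \ref{proposition_R} produces $\varepsilon>0$ such that the ball $\mathbf B(0;\varepsilon)$ is contained both in the reachable set from $0$ and in the controllable set to $0$ of that linear system with controls $u\in\mathcal U$. Since these trajectories use $v\equiv0$, they are also trajectories of (\ref{qaffine2}), hence $\mathbf B(0;\varepsilon)\subset\mathbf R(0)\cap\mathbf C(0)\subset D$ and $0\in\mathrm{int}\,D$.

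Next I would verify that $D$ is a maximal set of approximate controllability. For $x,y\in D$ one has $x\in\mathbf C(0)$, hence $0\in\mathbf R(x)$ and, concatenating controls, $\mathbf R(0)\subset\mathbf R(x)$; together with $y\in\overline{\mathbf R(0)}$ this gives $y\in\overline{\mathbf R(x)}$, so $D\subset\overline{\mathbf R(x)}$ for all $x\in D$. For maximality, suppose $D'\supset D$ likewise satisfies $D'\subset\overline{\mathbf R(x')}$ for all $x'\in D'$. Applying this with $x'=0$ gives $D'\subset\overline{\mathbf R(0)}$; applying it with an arbitrary $x'=x\in D'$ gives $0\in\overline{\mathbf R(x)}$, and since $\mathbf B(0;\varepsilon)\subset\mathbf C(0)$ some trajectory of (\ref{qaffine2}) from $x$ enters $\mathbf B(0;\varepsilon)$ and can be continued to $0$, so $x\in\mathbf C(0)$; hence $x\in\overline{\mathbf R(0)}\cap\mathbf C(0)=D$ and $D'=D$. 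Since $\mathrm{int}\,D\neq\varnothing$ and, by assumption~(ii), the quasi-affine system is locally accessible, Kawan \cite[Proposition 1.20]{Kawa13} then shows that $D$ is a control set, completing the proof.

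Most of the substance is already contained in Theorem \ref{Theorem_cs1} and Proposition \ref{proposition_R}, so the proof is essentially a matter of transporting those statements to the quasi-affine system along the trajectory correspondence above. The step requiring the most care is the maximality of $D$, which uses the ball $\mathbf B(0;\varepsilon)\subset\mathbf C(0)$ obtained from assumption~(i) for $v\equiv0$ and, through the cited characterization of control sets, the local accessibility assumption~(ii).
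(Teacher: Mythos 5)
Your proof is correct and takes a genuinely different route from the paper's. The paper fixes a periodic $v$, shows $\pi_2(D_v^a)$ has the approximate controllability property in the quasi-affine system, defines $D$ abstractly as the union of all sets with this property containing $\pi_2(D_v^a)$, and uses local accessibility (assumption (ii)) to produce an open set $\mathrm{int}\,\mathbf{C}_{\leq S}(0)$ around $0$ that is used to close the approximate-controllability chain; independence from $v$ is argued at the end. You instead define $D=\overline{\mathbf{R}(0)}\cap\mathbf{C}(0)$ explicitly, following the pattern of Lemma~\ref{Lemma_point}, which makes $D$ manifestly $v$-independent. Crucially, you replace the role of local accessibility by applying assumption~(i) to the constant control $v\equiv 0$ (admissible and periodic) via Proposition~\ref{proposition_R}, which already gives $\mathbf{B}(0;\varepsilon)\subset\mathbf{R}(0)\cap\mathbf{C}(0)$ and hence $0\in\mathrm{int}\,D$; the ball $\mathbf{B}(0;\varepsilon)\subset\mathbf{C}(0)$ then does the work in the maximality argument. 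Your inclusion $\pi_2(D_v^a)\subset D$ via the trajectory correspondence and shift invariance of $\mathcal{V}$ is the clean core observation and matches the paper in spirit. One small inaccuracy in your closing remark: as written, your argument does not actually invoke assumption~(ii) anywhere (and Kawan's Proposition~1.20, as used by the paper itself in Lemma~\ref{Lemma_point} for the not-necessarily-locally-accessible autonomized system, does not require it), so your proof in fact shows that Theorem~\ref{Theorem7.1} already follows from assumption~(i) alone — a mild sharpening rather than a defect.
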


\begin{proof}
Fix a $T_{v}$-periodic control $v\in\mathcal{V}$. The set $\pi_{2}(D_{v}^{a})$
is a neighborhood of $0\in\mathbb{R}^{d}$ and for all $x,y\in\pi_{2}(D_{v}%
^{a})$ there are $\tau_{x},\tau_{y}\in\mathbb{S}^{1}$ with $(\tau_{x}%
,x),(\tau_{y},y)\in D_{v}^{a}$ and $(\tau_{y},y)\in\overline{\mathbf{R}%
_{v}(\tau_{x},x)}$. This means that there are $t_{n}\geq0$ and $u_{n}%
\in\mathcal{U}$ with%
\[
\psi_{v}^{a}(t_{n};(\tau_{x},x),u_{n})=(t_{n}+\tau_{x}\operatorname{mod}%
T_{v},\psi(t_{n};x,u_{n},v(\tau_{x}+\cdot)))\rightarrow(\tau_{y},y)\text{ for
}n\rightarrow\infty.
\]
In particular, this shows that $y\in\overline{\mathbf{R}(x)}$, where
$\mathbf{R}(x)$ is the reachable set from $x$ of the quasi-affine system
(\ref{qaffine2}) given by%
\[
\mathbf{R}_{\leq S}(x):=\{\psi(t;x,u^{\prime},v^{\prime})\left\vert
\,t\in\lbrack0,S],~(u^{\prime},v^{\prime})\in\mathcal{U}\times\mathcal{V}%
\right.  \}\text{ and }\mathbf{R}(x):=\bigcup\nolimits_{S>0}\mathbf{R}_{\leq
S}(x).
\]
Define $D$ as the union of all sets $D^{\prime}$ satisfying $D^{\prime}%
\subset\overline{\mathbf{R}(x)}$ for all $x^{\prime}\in D^{\prime}$ and
containing $\pi_{2}(D_{v}^{a})$. We claim that $D\subset\overline
{\mathbf{R}(x)}$ for all $x\in D$. For the proof of the claim, let $x,y\in D$.
Then there are sets $D^{\prime}$ and $D^{\prime\prime}$ with $\pi_{2}%
(D_{v}^{a})\subset D^{\prime}\cap D^{\prime\prime}$ and $x\in D^{\prime},y\in
D^{\prime\prime}$. We know that $0\in\mathrm{int}\pi_{2}(D_{v}^{a})$. By local
accessibility of the quasi-affine system there is $S>0$ with $\varnothing
\not =\mathrm{int}\mathbf{C}_{\leq S}(0)\subset\pi_{2}(D_{v}^{a})\subset
D^{\prime}$. Then the inclusion $D^{\prime}\subset\overline{\mathbf{R}(x)}$
implies $0\in\mathbf{R}(x)$. Since $0,y\in D^{\prime\prime}$ the claim follows
from $y\in\overline{\mathbf{R}(0)}\subset\overline{\mathbf{R}(x)}$. Thus $D$
is a maximal set with the property that for all $x\in D$ one has
$D\subset\overline{\mathbf{R}(x)}$. Since $\mathrm{int}D\not =\varnothing$
Kawan \cite[Proposition 1.20]{Kawa13} implies that $D$ is a control set.

For every periodic control $v\in\mathcal{V}$ the projected set $\pi_{2}%
(D_{v}^{a})$ contains $0\in\mathbb{R}^{d}$. Thus the maximality property of
control sets implies that the control set $D$ is independent of $v$ and hence
contains $\pi_{2}(D_{v}^{a})$ for every periodic $v\in\mathcal{V}$.
\end{proof}

Next we show that under some additional assumptions the control set $D$ of the
quasi-affine system coincides (up to closure) with the union of the projected
control sets $D_{v}^{a}$. Thus the control set $D$ can be obtained by fixing
periodic controls $v$ and determining the control sets $D_{v}$ of the
corresponding autonomized systems (\ref{aug1}).

\begin{theorem}
\label{Theorem7.2}Suppose that the following assumptions hold:

(i) for every periodic $v\in\mathcal{V}$ the periodic linear system in
(\ref{periodic_v}) with unconstrained controls $u\in L^{\infty}(\mathbb{R}%
,\mathbb{R}^{m})$ is controllable;

(ii) the quasi-affine system (\ref{qaffine2}) is locally accessible;

(iii) for all periodic $v\in\mathcal{V}$ all Floquet exponents of the periodic
homogeneous part (\ref{bilinear}) of (\ref{qaffine2}) are different from $0$;

Then the quasi-affine system (\ref{qaffine2}) has a unique control set $D$
with nonvoid interior, and it satisfies%
\[
\overline{D}=\overline{\bigcup\nolimits_{v\in\mathcal{V}\text{ periodic}}%
\pi_{2}(\mathrm{int}D_{v}^{a})}.
\]

\end{theorem}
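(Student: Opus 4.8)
The plan is to establish the two inclusions $\overline{D} \supseteq \overline{\bigcup_{v \text{ periodic}} \pi_2(\mathrm{int}\,D_v^a)}$ and $\overline{D} \subseteq \overline{\bigcup_{v \text{ periodic}} \pi_2(\mathrm{int}\,D_v^a)}$ separately, the first being essentially a consequence of Theorem \ref{Theorem7.1} and the second being the substantive new content that uses hypothesis (iii).

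For the first inclusion, Theorem \ref{Theorem7.1} already gives $\pi_2(D_v^a) \subset D$ for every periodic $v \in \mathcal{V}$, hence $\bigcup_v \pi_2(\mathrm{int}\,D_v^a) \subset D$ and, taking closures, $\overline{\bigcup_v \pi_2(\mathrm{int}\,D_v^a)} \subset \overline{D}$. (Uniqueness of $D$ follows from the same maximality argument as in the proof of Theorem \ref{Theorem7.1}: every control set with nonvoid interior must meet a neighborhood of $0$, since by local accessibility and the structure near the origin one can always reach and leave a neighborhood of $0$; then the maximality of control sets forces it to equal $D$. I would spell this out carefully, perhaps invoking that for any periodic $v$, $0 \in \mathrm{int}\,\pi_2(D_v^a)$ and arguing as in Theorem \ref{Theorem_cs1}'s uniqueness part via scaling $x \mapsto \alpha x$, $u \mapsto \alpha u$, $v \mapsto v$, which is legitimate because the homogeneous part is bilinear.)

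For the reverse inclusion, the goal is to show $\mathrm{int}\,D \subseteq \overline{\bigcup_v \pi_2(\mathrm{int}\,D_v^a)}$, which suffices since then $\overline{D} = \overline{\mathrm{int}\,D} \subseteq \overline{\bigcup_v \pi_2(\mathrm{int}\,D_v^a)}$ — here I use that local accessibility plus Colonius–Kliemann \cite[Lemma 3.2.13(i)]{ColK00} gives $\overline{D} = \overline{\mathrm{int}\,D}$ for the quasi-affine system's control set. So fix $x_0 \in \mathrm{int}\,D$. The idea is to approximate: since $x_0 \in \mathrm{int}\,D \subseteq \overline{\mathbf{R}(0)}$ and $0 \in \overline{\mathbf{R}(x_0)}$ (shown as in Theorem \ref{Theorem7.1}), and both reachability statements come from concatenating finitely many pieces of trajectories under arbitrary $(u,v) \in \mathcal{U}\times\mathcal{V}$, one should be able to find a periodic control $v$ (approximating the relevant finite piece of $v$-control and then extending periodically) together with a control $u$ realizing a trajectory from near $0$ to near $x_0$ and back, staying close to the original; this would place $x_0$ in the closure of $\pi_2$ of a set of complete approximate controllability for the autonomized periodic-linear system for that $v$. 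Crucially, hypothesis (iii) — all Floquet exponents nonzero — guarantees via Theorem \ref{Theorem_cs1} that $\mathrm{int}\,D_v^a$ is \emph{bounded}, so $D_v^a$ is a genuine control set (not all of $\mathbb{S}^1\times\mathbb{R}^d$) and $x_0$, being reachable-and-controllable within a small tube, will land in $\pi_2(\mathrm{int}\,D_v^a)$; without (iii) the center subbundle could make $D_v^a$ unbounded and the correspondence breaks.

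The main obstacle, I expect, is the approximation step that produces a suitable \emph{periodic} $v$: one must show that the finite concatenation of trajectory-pieces witnessing $x_0 \in \overline{\mathbf{R}(0)}$ and $0 \in \overline{\mathbf{R}(x_0)}$ can be realized (up to arbitrarily small error in the endpoints) by a trajectory of the periodic system (\ref{periodic_v}) for some periodic $v$, and moreover that the resulting point lies in the \emph{interior} of the control set $D_v^a$ rather than merely its closure. The interior claim is where local accessibility of the quasi-affine system and the openness of reachable/controllable sets from Proposition \ref{proposition_R} (applied to the fixed-$v$ linear system) must be combined: one reaches an \emph{open} neighborhood of $x_0$ from $0$ and reaches $0$ from an open neighborhood of $x_0$, so the overlap puts a neighborhood of a nearby point inside $\mathbf{R}_v^a(0,0) \cap \mathbf{C}_v^a(0,0) \subseteq$ the region pinned down by Theorem \ref{Theorem_sub}, hence inside $\mathrm{int}\,D_v^a$. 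Care is needed because the weak$^*$ topology on $\mathcal{V}$ is the natural one for the flow $\Psi$ but periodic controls are dense in it, and one must check the solution map depends continuously enough on $v$ on finite time intervals (which it does, by standard Carathéodory-type estimates, since the right-hand side is affine in $v$ for the homogeneous part and the $B(v)u$ term is continuous in $v$). Once these continuity and density facts are assembled, $x_0 \in \overline{\bigcup_v \pi_2(\mathrm{int}\,D_v^a)}$ follows, completing the proof.
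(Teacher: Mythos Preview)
Your first inclusion and the reduction to $\mathrm{int}\,D$ via \cite[Lemma 3.2.13(i)]{ColK00} match the paper. The reverse inclusion, however, is where your plan diverges substantially and contains a real gap.

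The paper does \emph{not} approximate $v$ by periodic controls through any density/continuity argument in $\mathcal{V}$. Instead it uses that, by local accessibility, $\mathrm{int}\,E\subset\mathbf{R}(x_0)$ for any control set $E$ with nonvoid interior, so there exist $T>0$ and $(u^0,v)\in\mathcal{U}\times\mathcal{V}$ with $x_0=\psi(T;x_0,u^0,v)$; the restriction of $v$ to $[0,T]$, extended $T$-periodically, \emph{is} the periodic control used. No weak$^\ast$-continuity of the solution in $v$ is invoked, and the paper explicitly warns (in the paragraph preceding (\ref{periodic_v})) that the flow $\Psi$ is \emph{not} continuous in general because of the $v_iu_j$ cross-terms --- so the ``standard Carath\'eodory-type estimates'' you propose would not deliver what you need for general continuous $B(\cdot)$.

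More importantly, you misidentify the role of hypothesis (iii). It is not used to make $D_v^a$ bounded. Hyperbolicity is used to guarantee that $I_d-X_v(dT_v,0)$ is invertible, so that when $u^0$ is perturbed to a nearby $u\in\mathrm{int}_\infty(\mathcal{U})$ the $dT_v$-periodic equation $\dot x=A(v)x+B(v)u$ still has a (unique) $dT_v$-periodic solution, with initial value
\[
y=\bigl[I_d-X_v(dT_v,0)\bigr]^{-1}\int_0^{dT_v}X_v(dT_v,s)B(v(s))u(s)\,ds,
\]
depending continuously on $u$ in $L^\infty$, hence $y\to x_0$ as $u\to u^0$. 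Having $u$ in the $L^\infty$-interior of $\mathcal{U}$ is precisely what then lets one steer $(0,y)$ to, and from, a full neighborhood of $(0,y)$ in the autonomized system (\ref{7.5}), so that $(0,y)\in\mathrm{int}\,D_v^a$ by the uniqueness in Theorem~\ref{Theorem_cs1}. Your concatenation idea $0\rightsquigarrow x_0\rightsquigarrow 0$ would at best place a nearby point in $D_v^a$; you give no mechanism to upgrade this to \emph{interior} membership, and that mechanism --- perturbing $u$ into $\mathrm{int}_\infty(\mathcal{U})$ while keeping a periodic orbit nearby --- is exactly where (iii) enters. (Uniqueness of $D$ in the paper is obtained as a byproduct of this argument applied to an arbitrary control set $E$, rather than by your scaling argument, though the latter is plausible.)
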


\begin{proof}
The control set $D$ from Theorem \ref{Theorem7.1} contains all $\pi
_{2}(\mathrm{int}D_{v}^{a})$. Let $E$ be an arbitrary control set with nonvoid
interior of (\ref{qaffine2}). By local accessibility, Colonius and Kliemann
\cite[Lemma 3.2.13(i)]{ColK00} shows that $\overline{E}=\overline
{\mathrm{int}E}$. Hence it suffices to prove that $\mathrm{int}E\subset
\overline{\bigcup\nolimits_{v\in\mathcal{V}\text{ periodic}}\pi_{2}%
(\mathrm{int}D_{v}^{a})}$, which also implies $E=D$. Fix a point $x_{0}%
\in\mathrm{int}E$. Denote by $\mathrm{int}_{\infty}(\mathcal{U})$ the interior
of $\mathcal{U}$ with respect to the $L^{\infty}$-norm, and note that
$\mathrm{int}_{\infty}(\mathcal{U})$ is dense in $\mathcal{U}$ in this norm.

\textbf{Claim.} For every $\varepsilon>0$ there are $y\in\mathbb{R}^{d},T>0$,
and $(u,v)\in\mathrm{int}_{\infty}(\mathcal{U)}\times\mathcal{V}$ with
$\left\Vert y-x_{0}\right\Vert <\varepsilon$ and $\psi(dT;y,u,v)=y$.

For the proof of the claim note first that by local accessibility \cite[Lemma
3.2.13(iii)]{ColK00} implies $\mathrm{int}E\subset\mathbf{R}(x_{0})$ and hence
there are $u^{0}\in\mathcal{U}$, $v\in\mathcal{V}$, and $T>0$ with
$x_{0}=\varphi(T;x_{0},u^{0},v)$. We may suppose that $u^{0}$ and $v$ are
$T$-periodic functions. Thus $T=T_{v}$ for the $T$-periodic control
$v\in\mathcal{V}$ and we obtain $x_{0}=\varphi(dT_{v};x_{0},u^{0},v)$. Since
$\mathrm{int}_{\infty}(\mathcal{U})$ is dense in $\mathcal{U}$ one finds for
all $\varepsilon>0$ a $dT_{v}$-periodic control $u$ with $\left\Vert
u-u^{0}\right\Vert _{L^{\infty}}<\varepsilon$. By the hyperbolicity assumption
(ii), Colonius, Santana, Setti \cite[Proposition 2.9(i)]{ColSS22} implies that
the $dT_{v}$-periodic inhomogeneous differential equation (\ref{periodic_v})
has a unique $dT_{v}$-periodic solution with initial value $y$ at time $0$.
With the principal fundamental solution denoted by $X_{v}(t,s)$ it is given by%
\[
y=\left[  I_{d}-X_{v}(dT_{v},0)\right]  ^{-1}\int_{0}^{dT_{v}}X_{v}%
(dT_{v},s)B(v(s))u(s)ds.
\]
By \cite[Proposition 2.9(iv)]{ColSS22} the initial values $y$ of these
periodic solutions converge to $x_{0}$ for $u$ converging to $u^{0}$ in
$L^{\infty}([0,dT_{v}],\mathbb{R}^{m})$. This proves the \textbf{claim}.

It remains to prove that $x_{0}$ is in $\overline{\pi_{2}(D_{v}^{a})}$. This
follows if we can show that $y\in\pi_{2}(D_{v}^{a})$ since $y$ is arbitrarily
close to $x_{0}$. Assumption (i) and Theorem \ref{Theorem_Brunovsky}(ii) imply
that for all points $z\in\mathbb{R}^{d}$ there is $u^{\prime}\in L^{\infty
}([0,dT_{v}],\mathbb{R}^{m})$ such that
\[
z=\psi(dT_{v};0,u^{\prime},v)=\int_{0}^{dT_{v}}X_{v}(dT_{v},s)B(v(s))u^{\prime
}(s)ds.
\]
Since $u\in\mathrm{int}_{\infty}(\mathcal{U})$ it follows that for all $z$ in
a neighborhood $N_{1}(y)$ of $y$ there is $u^{\prime}\in\mathcal{U}$ with%
\[
z-X_{v}(dT_{v},0)y=\int_{0}^{dT_{v}}X_{v}(dT_{v},s)B(v(s))u^{\prime
}(s)ds,\text{ hence }z=\psi(dT_{v};y,u^{\prime},v).
\]
With $dT_{v}=0\operatorname{mod}T_{v}$ this means by Lemma \ref{Lemma3.3} that
the points $(0,z)\in\mathbb{S}^{1}\times N_{1}(y)$ are contained in the
reachable set%
\[
\mathbf{R}_{v,dT_{v}}^{a}(0,y)=\{(0,y^{\prime})\left\vert y^{\prime}%
\in\mathbf{R}_{v,dT_{v}}(0,y)\right.  \}
\]
of the autonomized system (\ref{7.5}) for the $T_{v}$-periodic $v$. Applying
the same arguments to the time reversed system, one finds that all
$(0,z)\in\mathbb{S}^{1}\times\mathbb{R}^{d}$ with $z$ in a neighborhood
$N_{2}(y)$ of $y$ are in the controllable set $\mathbf{C}_{v,dT_{v}}^{a}%
(0,y)$. Every point $(0,z)$ with $z\in N_{1}(y)\cap N_{2}(y)$ can be steered
to $(0,y)$ and then to any other point in this intersection. This implies that
$(0,y)$ is in the interior of a control set of the autonomized system
(\ref{7.5}). The only control set with nonvoid interior of this system is
$D_{v}^{a}$, hence it follows that $(0,y)\in\mathrm{int}D_{v}^{a}$ and
$y\in\pi_{2}(\mathrm{int}D_{v}^{a})$ and concludes the proof.
\end{proof}

Similarly as the periodic linear system (\ref{affine}) also the quasi-affine
system (\ref{qaffine2}) can be projected to the upper hemisphere
$\mathbb{S}^{d,+}$ of the Poincar\'{e} sphere by a conjugacy $e_{P}%
^{0}:\mathbb{R}^{d}\rightarrow\mathbb{S}^{d,+}$. We obtain the following corollary.

\begin{corollary}
Under the assumptions of Theorem \ref{Theorem7.2} the control set $D$ of the
quasi-affine system (\ref{qaffine2}) projects to the unique control set
$e_{P}^{0}(D)$ with nonvoid interior for the system induced on $\mathbb{S}%
^{d,+}$.
\end{corollary}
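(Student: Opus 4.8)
The plan is to repeat the projection construction of Section~\ref{Section5} for the quasi-affine system, dropping the circle factor $\mathbb{S}^{1}$ since (\ref{qaffine2}) is already autonomous. First I would attach to the homogeneous part $\dot{x}=A(v)x$ of (\ref{qaffine2}) the trivial scalar equation $\dot{z}=0$, obtaining a homogeneous control system on $\mathbb{R}^{d+1}$ whose coefficient data are the block matrices built from $A(v(t))$ and from the columns of $B(v(t))$, exactly as in (\ref{ext}). Projecting this system onto the unit sphere $\mathbb{S}^{d}$ by subtracting the radial components of the linear vector fields — the computation that produced (\ref{5.3}) — yields an induced control system on $\mathbb{S}^{d}$ for which the equator $\mathbb{S}^{d,0}$, and hence the open upper hemisphere $\mathbb{S}^{d,+}$, is invariant.

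Next I would check that $e_{P}^{0}:\mathbb{R}^{d}\to\mathbb{S}^{d,+}$, $x\mapsto(x,1)/\sqrt{1+\Vert x\Vert^{2}}$, with $C^{\infty}$ inverse $(s_{1},\dots,s_{d+1})\mapsto(s_{1}/s_{d+1},\dots,s_{d}/s_{d+1})$, is a conjugacy of the quasi-affine system (\ref{qaffine2}) with the restriction to $\mathbb{S}^{d,+}$ of this induced system. This is the $\mathbb{S}^{1}$-free analogue of Proposition~\ref{Proposition_e}(i): for $z\equiv1$ the extended system is a copy of the original, so for any admissible $(u,v)$ the curve $t\mapsto e_{P}^{0}(\psi(t;x_{0},u,v))$ agrees, after subtraction of the radial part, with the solution of the induced system started at $e_{P}^{0}(x_{0})$. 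This is an algebraic identity verified pointwise in $t$, and it therefore survives the passage to Carath\'{e}odory solutions even though $A(v(\cdot))$ and $B(v(\cdot))$ are merely measurable and the flow on $(\mathcal{U}\times\mathcal{V})\times\mathbb{R}^{d}$ need not be continuous; only the notions of reachable set and control set enter, and these do not require continuity of the flow.

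Finally, conjugacies preserve reachable sets, controllable sets and control sets, and a diffeomorphism carries interiors to interiors, so $e_{P}^{0}$ induces a bijection between the control sets with nonvoid interior of (\ref{qaffine2}) and those of the induced system on $\mathbb{S}^{d,+}$, with $\mathrm{int}(e_{P}^{0}(D))=e_{P}^{0}(\mathrm{int}\,D)\neq\varnothing$. Since Theorem~\ref{Theorem7.2} provides a unique such control set $D$ for the quasi-affine system, $e_{P}^{0}(D)$ is the unique control set with nonvoid interior of the induced system on $\mathbb{S}^{d,+}$. I expect the only delicate point to be the first two steps — confirming that the construction of Section~\ref{Section5} goes through verbatim when the constant coefficient matrices are replaced by the $v$-dependent, measurable data $A(v(\cdot))$, $B(v(\cdot))$, and that ``conjugacy'' and ``control set'' are exactly the concepts one is entitled to invoke for a system that is not control-affine; the rest is a transcription of Proposition~\ref{Proposition_e} and of the remarks on conjugacies at the end of Section~\ref{Section5}.
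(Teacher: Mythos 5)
Your proposal is correct and follows essentially the same route the paper takes (and only sketches): repeat the Poincar\'{e}-sphere construction of Section~\ref{Section5} without the $\mathbb{S}^{1}$ factor, verify that $e_{P}^{0}$ is a conjugacy by the same pointwise computation as in Proposition~\ref{Proposition_e}(i), invoke the fact that conjugacies preserve control sets and that a diffeomorphism onto an open subset sends interiors to interiors, and then appeal to the uniqueness from Theorem~\ref{Theorem7.2}. Your cautionary remarks about measurability and about the induced system on $\mathbb{S}^{d}$ not being control-affine are well taken, but as you correctly observe they do not affect the argument, since the conjugacy identity is pointwise and the notions of reachable set and control set are flow-theoretic rather than vector-field-theoretic.
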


\end{document}